\newcommand{\xmark}{\ding{55}}%
\newcommand{\tr}{{{\mathsf T}}}
\newtheorem{definition}{Definition}
\newtheorem{theorem}{Theorem}
\newtheorem{proposition}{Proposition}
\newtheorem{remark}{Remark}
\newtheorem{lemma}{Lemma}
\newtheorem{example}{Example}
\newtheorem{corollary}{Corollary}
\newtheorem{assumption}{Assumption}
\newenvironment{proof}{\textbf{Proof:}}{\hfill$\square$}
\let\OLDthebibliography\thebibliography
\renewcommand\thebibliography[1]{
  \OLDthebibliography{#1}
  \setlength{\parskip}{0pt}
  \setlength{\itemsep}{3pt plus 0.3ex}
}
\def\endthebibliography{%
  \def\@noitemerr{\@latex@warning{Empty `thebibliography' environment}}%
  \endlist
}
\begin{document}

\setlength{\abovedisplayskip}{9pt}
\setlength{\belowdisplayskip}{9pt}

	\begin{frontmatter}
		
		\title{System-level, Input-output and New Parameterizations of Stabilizing Controllers, and Their Numerical {Computation}}
		
		\thanks[footnoteinfo]{The material of this paper was not presented at any conference. This work is supported by NSF career 1553407, AFOSR Young Investigator Program, and ONR Young Investigator Program. L. Furieri and M. Kamgarpour are gratefully supported by ERC Starting Grant CONENE. }
		
		\author[ucsd]{Yang~Zheng}\ead{zhengy@eng.ucsd.edu},
		\author[EPFL,ETH]{Luca Furieri}\ead{luca.furieri@epfl.ch},
		\author[EPFL]{Maryam Kamgarpour}\ead{maryam.kamgarpour@epfl.ch},
		\author[Harvard1]{Na~Li}\ead{nali@seas.harvard.edu}
		
		\address[ucsd]{Department of Electrical and Computer Engineering, University of California San Diego, CA 92093.} \vspace{-.1in}
		\address[Harvard1]{School of Engineering and Applied Sciences, Harvard University, Boston, MA, 02138, U.S.} \vspace{-.1in} 
		
		\address[ETH]{Automatic Control Laboratory,  ETH Zurich, Switzerland.}  \vspace{-.1in}             
		\address[EPFL]{Institute of Mechanical Engineering, \'Ecole Polytechnique F\'ed\'erale de Lausanne (EPFL), Switzerland} \vspace{-.1in}
	 \vspace{-.2in}
		
		\begin{keyword}                           
			Internal stability, Youla parameterization, System-level synthesis, Convex optimization.
		\end{keyword}                             

		\begin{abstract}
		It is known that the set of internally stabilizing controller $\mathcal{C}_{\text{stab}}$ is non-convex, but it admits convex characterizations using certain closed-loop maps: a classical result is the {Youla parameterization}, and two recent notions are the {system-level parameterization} (SLP) and the {input-output parameterization} (IOP). In this paper, we address the existence of new convex parameterizations and discuss potential tradeoffs of each parametrization in different scenarios. Our main contributions are: 1) We  reveal that only four groups of stable closed-loop transfer matrices are equivalent to internal stability: one of them is used in the SLP, another one is used in the IOP, and the other two are new, leading to two new convex parameterizations of $\mathcal{C}_{\text{stab}}$. 2) We  investigate the properties of these parameterizations after imposing the finite impulse response (FIR) approximation, revealing that the IOP has the best ability of approximating $\mathcal{C}_{\text{stab}}$ given FIR constraints. 3) These four parameterizations require no \emph{a priori} doubly-coprime factorization of the plant, but impose a set of equality constraints. However, these equality constraints will never be satisfied exactly in floating-point arithmetic computation and/or implementation. We prove that the IOP is numerically robust for open-loop stable plants, in the sense that small mismatches in the equality constraints do not compromise the closed-loop stability; but a direct IOP implementation will fail to stabilize open-loop unstable systems in practice. The SLP is known to enjoy numerical robustness in the state feedback case; here, we show that numerical robustness of the four-block SLP controller requires case-by-case analysis even the plant is open-loop stable. 
		\end{abstract}
		
	\end{frontmatter}

\section{Introduction}
Feedback systems must be stable in some appropriate sense for practical deployment, and thus one fundamental problem in control theory is to design a feedback controller that stabilizes a given dynamical system~\cite{zhou1996robust}. Indeed, many control synthesis problems include stability as a constraint while optimizing some performance~\cite{dullerud2013course}. However, it is well-known that the set of stabilizing controllers is non-convex, and hence, hard to search directly over. One standard approach is to parameterize all stabilizing controllers and the corresponding closed-loop responses in a convex way, and then to optimize the performance over the new parameter(s) using convex optimization~\cite{boyd1991linear}.

A classical method for parameterizing  the set of all internally stabilizing controllers is based on the celebrated \emph{Youla parameterization}~\cite{youla1976modern} which relies on a doubly-coprime  factorization  of the system. It is shown in~\cite{boyd1991linear} that many performance specifications on the closed-loop system can be expressed in the Youla parameterization framework via convex optimization. Moreover, the foundational results of robust and optimal control are built on the Youla parameterization~\cite{zhou1996robust,francis1987course}. Recently, a \emph{system-level parameterization} (SLP)~\cite{wang2019system} and an \emph{input-output parameterization} (IOP)~\cite{furieri2019input} were proposed to characterize the set of internally stabilizing controllers, 
without relying on the doubly-coprime factorization technique. In principle, Youla, the SLP, and the IOP all directly treat certain closed-loop responses as design parameters and thus shift the controller synthesis from the design of a controller to the design of the closed-loop responses. We note that an open-source Python-based implementation for the SLP and the IOP is available~\cite{2020arXiv200412565T}.

Besides the classical control synthesis problems~\cite{zhou1996robust,francis1987course}, closed-loop parameterizations are powerful tools in other areas, including distributed optimal control~\cite{rotkowitz2006characterization,qi2004structured,lessard2015convexity,shah2013cal,matni2016regularization,sabuau2014youla} and quantifying the performance of learning in control~\cite{dean2017sample,dean2018regret, simchowitz2020improper, lale2020logarithmic,furieri2019learning,zheng2021sample}. In distributed control, the goal is to  design sub-controllers relying on locally available information, which is crucial for many cyber-physical systems. Enforcing these information constraints, however, may make the problems~computationally intractable~\cite{Witsenhausen,tsitsiklis1985complexity}. Nevertheless, it is well-known that a notion of \emph{quadratic invariance} (QI)~\cite{rotkowitz2006characterization,lessard2015convexity,sabuau2014youla} allows equivalently translating information constraints on the controller to convex constraints on the Youla parameter, thus preserving the convexity of distributed controller synthesis. The QI notion can also be integrated with the SLP and the IOP, resulting in equivalent convex reformulations~\cite{wang2019system,furieri2019input}. Together with a recent notion of \emph{Sparsity Invariance}~\cite{furieri2019sparsity}, these closed-loop parameterizations enable deriving convex approximations 
for problems with general sparsity constraints 
beyond QI; see~\cite[Remark 5]{zheng2019equivalence} for example.

For learning in control, the SLP was integrated within a \emph{Coarse-ID} control procedure to derive a 
sample complexity bound for {learning} the classical linear quadratic regulator (LQR)~\cite{dean2017sample}.~This procedure was {exploited} in~\cite{dean2018regret} to derive high probability guarantees of sub-linear regret {using an adaptive LQR control architecture}. In~{\cite{simchowitz2020improper, lale2020logarithmic}}, based on the Youla parameterization, an online gradient descent algorithm was proposed to achieve sub-linear regret {for} learning the linear quadratic gaussian (LQG) controller.
In~\cite{furieri2019learning}, the Youla framework was used to derive a sample complexity~bound on learning the \emph{globally optimal} distributed controller subject to QI constraints. The IOP was also used to derive an end-to-end sample complexity bound on learning LQG controllers for stable systems~\cite{zheng2021sample}.
{The results in \cite{simchowitz2020improper, lale2020logarithmic,furieri2019learning,zheng2021sample} motivate the shift from static controllers to dynamic ones in complex learning-based control tasks.}

Youla~\cite{youla1976modern}, the SLP~\cite{wang2019system}, and the IOP~\cite{furieri2019input} are fundamental building blocks for distributed controller synthesis and learning-based control applications. Nevertheless, a few critical issues have been left unexplored. First, while it is known that Youla, the SLP, and the IOP are all equivalent~\cite{zheng2019equivalence}, it remains unclear whether there exist other equivalent parameterizations beyond them. Second, the decision variables in these closed-loop parameterizations are, in general, infinite-dimensional. The works \cite{wang2019system,anderson2019system,furieri2019input} enforce \emph{finite impulse response} (FIR) constraints on the decision variables to enable formulation as finite-dimensional convex optimization problems. However, these works do not characterize the conservatism introduced by the FIR approximation using different parameterizations. Third, unlike Youla, the SLP and the IOP do not need to know a doubly-coprime factorization \emph{a priori}, and instead introduce a set of equality constraints for achievable closed-loop responses. A fact that is not investigated in the SLP~\cite{wang2019system,anderson2019system}, the IOP~\cite{furieri2019input} or the recent work~\cite{zheng2019equivalence} is that the set of equality constraints can never be satisfied exactly in numerical computation, potentially affecting the closed-loop stability.

\subsection{Contributions}

This paper aims to investigate the issues raised above and provide a complete understanding of closed-loop parameterizations. We introduce new parameterizations beyond SLP/IOP and discuss tradeoffs among these parameterizations in different scenarios. Note that our previous work~\cite{zheng2019equivalence} established explicit affine mappings among Youla, SLP, and IOP parameters, but it provides no investigation of the issues mentioned above. Specifically, the contributions of this paper are as follows.

First, we examine all possible parameterizations for the set of internally stabilizing controllers $\mathcal{C}_{\text{stab}}$ using closed-loop responses from the disturbances 
$(\bm{\delta}_\textbf{x}, \bm{\delta}_\textbf{y}, \bm{\delta}_\textbf{u})$ to state, output, control signals $(\textbf{x}, \textbf{y}, \textbf{u})$; see Figure~\ref{Fig:LTI} for illustration. Our strategy is to examine the cases where the stability of external transfer matrices is equivalent to internal stability. We reveal that only four groups of stable disturbance-to-signal maps can guarantee internal stability (see Theorem~\ref{Theo:mainresult}): one of them is used in the SLP~\cite{wang2019system}, another one is a classical result and is used in the IOP~\cite{furieri2019input}, and the other two have not been discussed before and thus can be used to derive two new parameterizations (Propositions~\ref{prop:slp3} and~\ref{prop:slp4}). Our results are \emph{complete and exclusive}, in the sense that there are no other parameterizations for $\mathcal{C}_{\text{stab}}$ using closed-loop responses from $(\bm{\delta}_\textbf{x}, \bm{\delta}_\textbf{y}, \bm{\delta}_\textbf{u})$ to $(\textbf{x}, \textbf{y}, \textbf{u})$.

Second, we 
investigate the impact of imposing FIR constraints on the closed-loop parameterizations. We show that the IOP provides the tightest approximation of $\mathcal{C}_{\text{stab}}$ 
after imposing FIR constraints (Theorem~\ref{theo:FIR}). This result is enabled by the fact that the IOP {directly} deals with the~maps from inputs to outputs without {passing through} the system state, while the SLP and the two new parametrizations explicitly involve the system state or disturbances on the system state. Motivated by~\cite{anderson2017structured}, we characterize state-space realizations for the controllers in closed-loop parameterizations after imposing FIR approximations (Theorem~\ref{Theo:IOPstatespace}). The state-space realizations provide easily implementable controllers for practical deployment.

Third, we quantify the numerical robustness of closed-loop parameterizations due to floating-point arithmetic in both numerical computation and controller implementation.  
We prove that in the IOP, small numerical mismatches in the equality constraints do not compromise~closed-loop stability for open-loop stable plants, but will destabilize the closed-loop system for unstable plants (Theorem~\ref{theo:IOProbustness}). This result holds similarly for the two new closed-loop parameterizations. We also show that, in general, the four-block SLP controller in the output-feedback case is vulnerable to destabilization due to small mismatches in the equality constraints (Theorem~\ref{theo:SLProbustness}), no matter whether the plant is open-loop stable or unstable.~This~issue arises irrespective of which SLP controller implementation is used \cite{wang2019system,anderson2019system}. These results suggest that it is unwise to directly apply the IOP or related parameterizations for open-loop unstable systems; instead, utilizing a pre-stabilizing controller is more appealing since it avoids the destabilization issue. 

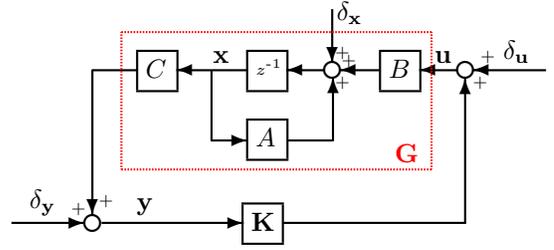
\begin{figure}[t]
\setlength{\belowcaptionskip}{-10pt}
\setlength{\abovecaptionskip}{-10pt}
  \centering
  \begin{center}
  \setlength{\unitlength}{0.008in}
\begin{picture}(243,170)(140,410)
\thicklines

\put(147,440){\vector(1, 0){ 93}}
\put(141,540){\vector( 0, -1){ 95}}
\put(88,440){\vector( 1, 0){ 48}}
\put(141,440){\circle{10}}
\put(170,445){\makebox(0,0)[lb]{$\mathbf{y}$}}
\put(100,445){\makebox(0,0)[lb]{$\delta_{\mathbf{y}}$}}
\put(145,450){\makebox(0,0)[lb]{\tiny +}}
\put(127,445){\makebox(0,0)[lb]{\tiny +}}


\put(385,540){\circle{10}}
\put(380,540){\vector( -1, 0){ 25}}
\put(385,439.5){\vector( 0,1){ 95.5}}
\put(438,540){\vector(-1, 0){ 48}}
\put(410,545){\makebox(0,0)[lb]{$\delta_{\mathbf{u}}$}}
\put(365,545){\makebox(0,0)[lb]{$\mathbf{u}$}}
\put(220,545){\makebox(0,0)[lb]{$\mathbf{x}$}}
\put(395,545){\makebox(0,0)[lb]{\tiny +}}
\put(390,528){\makebox(0,0)[lb]{\tiny +}}
\put(266,440){\line(1, 0){ 119.5}}

\put(240,427){\framebox(25,25){}} 
\put(245,435){\makebox(0,0)[lb]{$\mathbf{K}$}}

\put(330,527){\framebox(25,25){}} 
\put(335,535){\makebox(0,0)[lb]{$B$}}

\put(304,542){\makebox(0,0)[lb]{\tiny +}}
\put(300,547.5){\makebox(0,0)[lb]{\tiny +}}
\put(300,527.5){\makebox(0,0)[lb]{\tiny +}}

\put(329,540){\vector(-1, 0){26}}
\put(298,540){\circle{10}}
\put(298,580){\vector(0, -1){35}}
\put(301,570){\makebox(0,0)[lb]{$\delta_{\mathbf{x}}$}}
\put(293,540){\vector(-1, 0){25}}

\put(243,527){\framebox(25,25){}} 
\put(248,537){\makebox(0,0)[lb]{\tiny  $z^{\text{-}1}$}}

\put(242,540){\vector(-1, 0){46}}
\put(219,540){\line(0, -1){46}}
\put(218.5,494){\vector(1, 0){24}}

\put(243,482){\framebox(25,25){}} 
\put(248,490){\makebox(0,0)[lb]{$A$}}

\put(268.5,494){\line(1, 0){29.7}}
\put(298.5,493.5){\vector(0, 1){42}}

\put(171,527){\framebox(25,25){}} 
\put(176,535){\makebox(0,0)[lb]{$C$}}
\put(170.5,540){\line(-1, 0){30}}


{\color{red}
\put(160,475){\dashbox(203,90){}} 
\put(339,480){\makebox(0,0)[lb]{$\mathbf{G}$}}
}

\end{picture}
  \end{center}
  \caption{Interconnection of the plant $\mathbf{G}$ and the controller $\mathbf{K}$.}\label{Fig:LTI}
\end{figure}

\subsection{Paper Structure}

The rest of this paper is organized as follows. We {state the problem} in Section~\ref{Section:preliminaries}. The relationship between the stability of external transfer matrices and internal stability is revealed in Section~\ref{Section:IOS}. Four parameterizations of stabilizing controllers using closed-loop responses, including the SLP and the IOP, are presented in Section~\ref{section:paramterization}. Numerical computation using the FIR constraints and controller implementation are discussed in Section~\ref{section:FIR}. We investigate the numerical robustness of closed-loop parameterizations in Section~\ref{section:robustness}. A numerical application is shown in Section~\ref{section:example}, and we conclude the paper in Section~\ref{section:conclusion}. 

\subsection{Notation}
{The symbols $\mathbb{R}$ and $\mathbb{N}$ refer to the set of real and integer numbers, respectively.} We use lower and upper case letters (\emph{e.g.} $x$ and $A$) to denote vectors and matrices, respectively. Lower and upper case boldface letters (\emph{e.g.} $\mathbf{x}$ and $\mathbf{G}$) are used to denote signals and transfer matrices, respectively.  We denote the set of real-rational proper stable\footnote{Throughout the paper, ``stable'' means ``asymptotically stable'', \emph{i.e.}, all eigenvalues/poles have strictly negative real parts in continuous time (magnitudes less than 1 in discrete time).} transfer matrices as $\mathcal{RH}_{\infty}$.  We use the notation $\mathbf{G} \in \frac{1}{z}\mathcal{RH}_{\infty}$ to denote that $\mathbf{G}$ is stable and strictly proper. Given $\mathbf{G} \in \mathcal{RH}_{\infty}$, we denote its $\mathcal{H}_{\infty}$ norm by $\|\mathbf{G}\|_{\infty}:= \sup_{\omega} {\sigma}_{\max} (\mathbf{G}(e^{j\omega}))$, where ${\sigma}_{\max}(\cdot)$ denotes the maximum singular value. Given a stable transfer matrix $\mathbf{G}(z)$, the square of its $\mathcal{H}_2$ norm is $\|\mathbf{G}\|^2_{\mathcal{H}_2}:= {\frac{1}{2\pi}\int_{-\pi}^{\pi} \text{Trace}\left(\mathbf{G}^*(e^{j\omega})\mathbf{G}(e^{j\omega})\right)d\omega}$. For simplicity, we omit the dimension of transfer matrices, which shall be clear in the context. Also, we use $I$ (resp. $0$) to denote the identity matrix (resp. zero matrix) of compatible dimension. In Section~\ref{section:statespace}, to avoid ambiguity, we explicitly write the matrix dimension and use $I_p$ to denote the identity matrix of dimension $p$.  Finally, the state-space realization $C(zI - A)^{-1}B + D$ is denoted as
 $ \left[\begin{array}{c|c} A & B \\\hline
     C & D\end{array}\right].$

\section{Problem statement}\label{Section:preliminaries}


\subsection{System model}

We consider strictly proper discrete-time linear time-invariant (LTI) plants of the form
\begin{equation}\label{eq:LTI}
    \begin{aligned}
        x[t+1] &= A x[t] + B u[t] + \delta_x[t],\\
        y[t]   &= Cx[t] + \delta_y[t],
    \end{aligned}
\end{equation}
where $x[t] \in \mathbb{R}^{n},u[t]\in \mathbb{R}^{m},y[t]\in \mathbb{R}^{p}$ are the state vector, control action, and measurement vector at time $t$, respectively; $\delta_x[t]\in \mathbb{R}^{n}$ and $\delta_y[t]\in \mathbb{R}^{p}$ are external disturbances on the state and measurement vectors at time $t$, respectively. The transfer matrix from $\mathbf{u}$ to $\mathbf{y}$ is
$
    \mathbf{G} = C(zI - A)^{-1}B.
$
Consider an LTI dynamical controller
\begin{equation}\label{eq:LTIController}
    \mathbf{u} = \mathbf{K}\mathbf{y} + \bm{\delta}_u,
\end{equation}
where $\bm{\delta}_u $ is the external disturbance on the control action. A state-space realization of~\eqref{eq:LTIController} is
\begin{equation}\label{eq:state_space_controller}
    \begin{aligned}
        \xi[t+1] &= A_k \xi[t] + B_k y[t],\\
        u[t]   &= C_k\xi[t] + D_k y[t]+ \delta_u[t],
    \end{aligned}
\end{equation}
where $\xi[t]\in \mathbb{R}^{q}$ is the internal state of the controller at time $t$. {The formulation~\eqref{eq:state_space_controller} above reduces to a static controller when $(A_k,B_k,C_k,D_k)=(0,0,0,K)$ for some $K \in \mathbb{R}^{m \times p}$. }In this paper, we make the following standard assumption.
\begin{assumption} \label{assumption:stabilizability}
    Both the plant and controller realizations are stabilizable and detectable, \emph{i.e.}, $(A, B)$ and $(A_k, B_k)$ are stabilizable, and $(A,C)$ and $(A_k,C_k)$ are detectable.
\end{assumption}

 Applying the controller~\eqref{eq:LTIController} to the plant~\eqref{eq:LTI} leads to a closed-loop system shown in Fig.~\ref{Fig:LTI}. 
 Since the plant is strictly proper, the closed-loop system is always well-posed~\cite[Lemma 5.1]{zhou1996robust}.

\subsection{Internal stability}
{Internal stability is defined as follows}~\cite[Chapter 5.3]{zhou1996robust}:
\begin{definition}
    The system in~Fig.~\ref{Fig:LTI} is \emph{internally stable} if it is well-posed, and the states $(x[t],\xi[t])$ converge to zero as $t\rightarrow \infty$ for all initial states $(x[0],\xi[0])$ when $\delta_x[t] = 0, \delta_y[t]=0, \delta_u[t] = 0, \forall t$.
\end{definition}

We say the controller $\mathbf{K}$ \emph{internally stabilizes} the plant $\mathbf{G}$ if the closed-loop system in Fig.~\ref{Fig:LTI} is {internally stable}. The set of all LTI internally stabilizing controllers is defined as
\begin{equation}
    \mathcal{C}_{\text{stab}} := \{\mathbf{K} \mid \mathbf{K} \; \text{internally stabilizes} \; \mathbf{G}\}.
\end{equation}
Note that {when an infinite time-horizon is considered}, a feedback system must at least be stable, and  any controller synthesis will implicitly or explicitly involve a constraint $\mathbf{K} \in \mathcal{C}_{\text{stab}}$. Therefore, it is of fundamentally importance to characterize $ \mathcal{C}_{\text{stab}}$. Indeed, it is well-known that $\mathcal{C}_{\text{stab}}$ is non-convex, and it is not difficult to find explicit examples where $\mathbf{K}_1, \mathbf{K}_2\in \mathcal{C}_{\text{stab}}$ and $\frac{1}{2}(\mathbf{K}_1 + \mathbf{K}_2)\notin \mathcal{C}_{\text{stab}}$. Accordingly, it is not easy to directly search over $\mathbf{K} \in \mathcal{C}_{\text{stab}}$ for control synthesis, and a suitable change of variables is used in many control synthesis procedures~\cite{zhou1996robust,dullerud2013course,boyd1991linear,youla1976modern,francis1987course,wang2019system,furieri2019input,zheng2019equivalence}.

A standard state-space characterization of
internal stabilization is as follows.
\begin{lemma}[{\cite[Lemma 5.2]{zhou1996robust}}] \label{lemma:internalstability}
    Under Assumption~\ref{assumption:stabilizability}, $\mathbf{K}$ \emph{internally stabilizes}  $\mathbf{G}$ if and only if
    \begin{equation} \label{eq:closed_loop_matrix}
    A_{\text{\emph{cl}}} := \begin{bmatrix}
    A+BD_kC & BC_k\\
  B_kC & A_k
    \end{bmatrix}
\end{equation}
is stable.
\end{lemma}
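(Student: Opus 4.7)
The plan is to observe that when all external disturbances are set to zero, the interconnection of the plant~\eqref{eq:LTI} and the controller~\eqref{eq:state_space_controller} reduces to an autonomous LTI system whose state transition matrix is exactly $A_{\text{cl}}$, so the state-convergence definition of internal stability collapses to a Schur-stability condition on $A_{\text{cl}}$.

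First, I would set $\delta_x[t]=\delta_y[t]=\delta_u[t]=0$ in \eqref{eq:LTI} and \eqref{eq:state_space_controller}. Eliminating $y[t]=Cx[t]$ from the measurement equation and $u[t]=C_k\xi[t]+D_k Cx[t]$ from the controller output yields $x[t+1]=(A+BD_kC)x[t]+BC_k\xi[t]$ and $\xi[t+1]=B_kCx[t]+A_k\xi[t]$, which stacks into $\begin{bmatrix} x[t+1] \\ \xi[t+1] \end{bmatrix} = A_{\text{cl}} \begin{bmatrix} x[t] \\ \xi[t] \end{bmatrix}$ with $A_{\text{cl}}$ exactly as in \eqref{eq:closed_loop_matrix}. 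Because $\mathbf{G}$ is strictly proper, there is no algebraic loop and the substitution is unambiguous, which also handles the well-posedness clause of the internal stability definition.

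Second, from the definition of internal stability I need $(x[t],\xi[t])\to 0$ for every initial condition $(x[0],\xi[0])$, and from the recursion above this is precisely $A_{\text{cl}}^t z_0 \to 0$ for every vector $z_0$. By a standard linear-algebra fact, this is equivalent to $A_{\text{cl}}$ having all eigenvalues strictly inside the unit disk, i.e., $A_{\text{cl}}$ being stable; both directions of the lemma then follow from the same observation, with the converse being the trivial one (stability of $A_{\text{cl}}$ immediately gives $A_{\text{cl}}^t \to 0$).

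I do not foresee a substantive obstacle: the result is essentially bookkeeping once the state-based definition of internal stability in the paper is unfolded. Assumption~\ref{assumption:stabilizability} is not actually invoked at this stage, because the adopted definition only references the states of the chosen realizations of $\mathbf{G}$ and $\mathbf{K}$. Stabilizability and detectability become indispensable later in the paper, when the state-based notion has to be linked with stability of closed-loop transfer matrices in the SLP and IOP parameterizations, since they rule out hidden unstable modes that could otherwise decouple state convergence from transfer-matrix stability; the most delicate point in writing out the proof cleanly is therefore just to signal that the hypotheses of Assumption~\ref{assumption:stabilizability} are carried through to the later sections rather than consumed here.
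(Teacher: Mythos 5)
Your proof is correct: with the paper's state-based Definition of internal stability, setting the disturbances to zero and eliminating $y$ and $u$ yields exactly the autonomous recursion $\begin{bmatrix} x[t+1] \\ \xi[t+1]\end{bmatrix} = A_{\text{cl}}\begin{bmatrix} x[t] \\ \xi[t]\end{bmatrix}$, and convergence for all initial conditions is equivalent to Schur stability of $A_{\text{cl}}$. The paper gives no proof of its own for this lemma (it is cited directly from Zhou--Doyle--Glover), and your argument is the standard one that citation relies on; your side observation that Assumption~\ref{assumption:stabilizability} is not actually consumed here, but only later when state convergence is tied to stability of external transfer matrices, is also accurate.
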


Note that the result in Lemma~\ref{lemma:internalstability} {is a simplified version of~\cite[Lemma 5.2]{zhou1996robust}} {because} we {focus on} strictly proper plants for simplicity. {Lemma~\ref{lemma:internalstability}} leads to an explicit state-space characterization of the set $\mathcal{C}_{\text{stab}}$ {as follows:}
\begin{equation}\label{eq:state_space_representation}
    \mathcal{C}_{\text{stab}} = \left\{\mathbf{K} \mid A_{\text{cl}} =\begin{bmatrix}
    A+BD_kC & BC_k\\
  B_kC & A_k
    \end{bmatrix} \text{is stable}\right\},
\end{equation}
where $\mathbf{K} = C_k(zI - A_k)^{-1}B_k + D_k$. Unfortunately, the stability condition on $A_{\text{cl}}$ in~\eqref{eq:state_space_representation} is still non-convex in terms of the parameters $A_k,B_k,C_k,D_k$.

\subsection{Characterizations based on transfer functions}

Unlike the state-space parameterization~\eqref{eq:state_space_representation}, there are several frequency-domain characterizations for $\mathcal{C}_{\text{stab}}$, where only convex constraints are involved in certain transfer functions. A classical approach is the celebrated \emph{Youla parameterization}~\cite{youla1976modern}, where a doubly-coprime factorization  of the plant is used.
 \begin{definition}
        A collection of stable transfer matrices, $\mathbf{U}_l, \mathbf{V}_l,\mathbf{N}_l,\mathbf{M}_l,\mathbf{U}_r, \mathbf{V}_r,\mathbf{N}_r,\mathbf{M}_r \in \mathcal{RH_{\infty}}$ is called a doubly-coprime factorization of $\mathbf{G}$ if
        $
            \mathbf{P}_{22} = \mathbf{N}_r\mathbf{M}_r^{-1} = \mathbf{M}_l^{-1}\mathbf{N}_l
        $
        and
        $$
            \begin{bmatrix} \mathbf{U}_l & -\mathbf{V}_l \\ -\mathbf{N}_l & \mathbf{M}_l\end{bmatrix}
            \begin{bmatrix} \mathbf{M}_r & \mathbf{V}_r \\ \mathbf{N}_r & \mathbf{U}_r\end{bmatrix} = I.
        $$
    \end{definition}
    Such doubly-coprime factorization can always be computed if 
    the state-space realization of $\mathbf{G}$ is stabilizable and detectable~\cite{nett1984connection}. We have the following equivalence~\cite{youla1976modern}
    \begin{equation} \label{eq:youla}
            \mathcal{C}_{\text{stab}} = \{\mathbf{K} = (\mathbf{V}_r - \mathbf{M}_r\mathbf{Q})(\mathbf{U}_r - \mathbf{N}_r\mathbf{Q})^{-1} \mid   \mathbf{Q} \in \mathcal{RH}_{\infty} \},
    \end{equation}
    where $\mathbf{Q}$ is denoted as the \emph{Youla parameter}. Note that the Youla parameter $\mathbf{Q}$ can be freely  chosen in $\mathcal{RH}_{\infty}$. We refer the interested reader to~\cite{youla1976modern,francis1987course,zhou1996robust} for more details on {the} Youla parameterization.

  Two recent approaches are the \emph{system-level parameterization} (SLP)~\cite{wang2019system} and {the} \emph{input-output parameterization} (IOP)~\cite{furieri2019input}, where no doubly-coprime factorization is required \emph{a priori}.
 Both the SLP and the IOP use certain closed-loop responses for parameterizing $\mathcal{C}_{\text{stab}}$ with the addition of a set of equality constraints.
 In particular, the SLP utilizes the closed-loop responses from $(\bm{\delta}_x,\bm{\delta}_y)$ to $(\mathbf{x},\mathbf{u})$, and the IOP relies on the closed-loop responses from $(\bm{\delta}_y,\bm{\delta}_u)$ to $(\mathbf{y},\mathbf{u})$. It is known that Youla parameterization, the SLP, and the IOP are equivalent to each other~\cite{zheng2019equivalence}. Inspired by these results~\cite{wang2019system,furieri2019input,zheng2019equivalence}, we aim to investigate the following questions:
 \begin{itemize}
    \item What are all the possible parameterizations for $\mathcal{C}_{\text{stab}}$ using closed-loop responses, beyond the SLP and the IOP? We present a complete and exclusive answer by examining all closed-loop responses from $(\bm{\delta}_x,\bm{\delta}_y,\bm{\delta}_u)$ to $(\mathbf{x},\mathbf{y},\mathbf{u})$ (Sections~\ref{Section:IOS} and~\ref{section:paramterization}).
   \item All the closed-loop parameterizations for $\mathcal{C}_{\text{stab}}$ are equivalent in theory. When applying FIR approximations for practical computation, are they still equivalent? What are the corresponding state-space realizations for controller implementation? Both of them are answered in Section~\ref{section:FIR}.
   \item Finally, floating-point  arithmetic computation and/or implementation always introduce small numerical mismatches for equality constraints. What are the impact of these numerical mismatches for closed-loop parameterizations? In Section~\ref{section:robustness}, we investigate a property termed as \textit{numerical robustness} of closed-loop parameterizations.
 \end{itemize}


\subsection{Optimal controller synthesis}
As a particular application, we consider the following optimal control problem
\begin{equation} \label{eq:H2_s2}
    \begin{aligned}
        \min_{\mathbf{K}}  \quad &  \lim_{T\rightarrow \infty }\mathbb{E}\left[\frac{1}{T}\sum_{t=0}^T\left(y_t^\tr Qy_t + u_t^\tr R u_t\right)\right] \\
        \text{s.t.} \quad & x[t+1] = Ax[t] +B(u[t] + \delta_u[t]), \\
        & \quad \,\;\; y[t] = Cx[t] + \delta_y[t], \\
        & \quad \;\,\;\;\;\; \mathbf{u} = \mathbf{K} \mathbf{y},
    \end{aligned}
\end{equation}
where $\delta_u[t] \sim \mathcal{N}(0,I), \delta_y[t] \sim \mathcal{N}(0,I), Q \succ 0$ and $R \succ 0$ are performance-weight matrices with compatible dimensions.
Problem~\eqref{eq:H2_s2} can be reformulated into a problem in the frequency domain (see e.g.,~\cite[Appendix G]{zheng2021sample})
    \begin{align}
        \min_{\mathbf{K}}  \quad &  \left\|\begin{bmatrix} Q^{\frac{1}{2}} & \\  & R^{\frac{1}{2}}  \end{bmatrix} \begin{bmatrix} (I-\mathbf{G}\mathbf{K})^{-1} & (I-\mathbf{G}\mathbf{K})^{-1}\mathbf{G} \\ \mathbf{K}(I-\mathbf{G}\mathbf{K})^{-1} & (I-\mathbf{K}\mathbf{G})^{-1} \end{bmatrix}\right\|^2_{\mathcal{H}_2} \nonumber\\
        \text{s.t.} \quad & \mathbf{K} \in \mathcal{C}_{\text{stab}}, \label{eq:OptimalControlv2}
    \end{align}
where $\mathbf{G} = C(zI - A)^{-1}B$.
It is easy to see that the optimal synthesis problem~\eqref{eq:OptimalControlv2} is non-convex in terms of $\mathbf{K}$ since both the cost function and constraint are non-convex. Instead of optimizing over the controller directly, we will establish a variety of equivalent convex reformulations based upon optimizing over closed-loop responses. Our results will reveal computational properties and numerical robustness of such reformulations based on closed-loop parameterizations.

We conclude this section by stating the following classical result, which will be frequently used.
\begin{lemma}[{\cite[Chapter 3]{zhou1996robust}}] \label{lemma:3}
Given a transfer matrix $
    \mathbf{T}(z) = C(zI - A)^{-1}B + D
$, we have
\begin{itemize}
\item If $(A,B,C)$ is detectable and stabilizable, then $\mathbf{T}(z) \in \mathcal{RH}_{\infty}$ if and only if $A$ is stable;
\item If $(A,B)$ is not stabilizable, or $(A,C)$ is not detectable, then the stability of $A$ is sufficient but not necessary for $ \mathbf{T}(z) \in \mathcal{RH}_{\infty}$.
\end{itemize}
\end{lemma}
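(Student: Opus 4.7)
The plan is to use the Kalman canonical decomposition to relate the poles of $\mathbf{T}(z)$ to the eigenvalues of $A$, and then exploit the hypotheses of stabilizability and detectability to argue that no unstable eigenvalue can escape appearing in $\mathbf{T}(z)$.

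First I would dispatch the easy direction: if $A$ is stable, then $(zI-A)^{-1}$ has all its poles in the stable region, so $\mathbf{T}(z) = C(zI-A)^{-1}B + D$ is a proper rational matrix with poles contained in the spectrum of $A$, hence $\mathbf{T}(z)\in \mathcal{RH}_\infty$. This part does not use stabilizability or detectability at all, and it already gives the sufficiency claim in both bullets.

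For the necessity under stabilizability and detectability, I would apply the Kalman decomposition to split the state space into four invariant subspaces (controllable-observable, controllable-unobservable, uncontrollable-observable, uncontrollable-unobservable). A standard calculation shows that $\mathbf{T}(z)$ depends only on the restriction of $(A,B,C)$ to the controllable-and-observable part, so the poles of $\mathbf{T}(z)$ are exactly the eigenvalues of $A$ restricted to that subsystem. Now I invoke the hypotheses: stabilizability of $(A,B)$ forces all uncontrollable eigenvalues of $A$ to lie in the stable region, and detectability of $(A,C)$ forces all unobservable eigenvalues to lie in the stable region. Consequently, every eigenvalue of $A$ is either already stable for structural reasons, or else lies in the controllable-observable part and therefore appears as a pole of $\mathbf{T}(z)$, which by assumption is stable. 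Combining these cases gives stability of $A$.

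For the second bullet, sufficiency is already covered above, so only non-necessity remains, and this is settled by a concrete counterexample exhibiting a pole-zero cancellation. A minimal choice is to take a block-diagonal realization with one stable and observable/controllable mode together with one unstable mode that is either uncontrollable or unobservable, e.g.
\begin{equation*}
A = \begin{bmatrix} a_s & 0 \\ 0 & a_u \end{bmatrix},\quad B = \begin{bmatrix} 1 \\ 0 \end{bmatrix},\quad C = \begin{bmatrix} 1 & 0 \end{bmatrix},\quad D = 0,
\end{equation*}
with $a_s$ stable and $a_u$ unstable. Then $\mathbf{T}(z) = (z-a_s)^{-1}\in\mathcal{RH}_\infty$ while $A$ is unstable, showing that stability of $A$ is not necessary once stabilizability or detectability fails. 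The only substantive step is the first one, namely the precise statement that $\mathbf{T}(z)$ sees only the controllable-and-observable subsystem; I would simply quote the Kalman decomposition rather than rederive it.
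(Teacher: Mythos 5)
The paper does not prove this lemma at all: it is stated as a classical fact and attributed to \cite[Chapter 3]{zhou1996robust}, so there is no in-paper argument to compare yours against. Judged on its own, your proof is correct and is essentially the standard textbook derivation. The sufficiency direction is fine (the poles of $C(zI-A)^{-1}B+D$ lie among the eigenvalues of $A$ via the adjugate formula, and properness is immediate). For necessity under stabilizability and detectability, your accounting of the four Kalman blocks is complete: eigenvalues in any block containing an uncontrollable or unobservable mode are stable by hypothesis, and eigenvalues of the controllable-and-observable block are poles of $\mathbf{T}(z)$, hence stable when $\mathbf{T}(z)\in\mathcal{RH}_\infty$. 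The counterexample with a decoupled unstable, uncontrollable, unobservable mode correctly settles non-necessity in the second bullet. The only place where the weight of the argument is outsourced is the assertion that for a minimal realization every eigenvalue of the state matrix is a pole of the transfer matrix (no hidden cancellation); you are explicit that you are quoting this rather than rederiving it, which is reasonable given that the lemma itself is a quoted classical result, but be aware that this fact is the real content of the first bullet and would need its own proof in a self-contained treatment.
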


\section{External transfer matrix characterization of internal stability} \label{Section:IOS}
In this section, we revisit the external transfer matrix characterization of internal stability,  which will be applied to characterize $\mathcal{C}_{\text{stab}}$ in the next section.

%
Combining~\eqref{eq:LTI} with~\eqref{eq:LTIController}, we can write the closed-loop responses from $(\bm{\delta}_x,\bm{\delta}_y,\bm{\delta}_u)$ to $(\mathbf{x},\mathbf{y},\mathbf{u})$  as
\begin{equation} \label{eq:closedloop}
        \begin{bmatrix}
            \mathbf{x} \\
            \mathbf{y} \\
            \mathbf{u}
        \end{bmatrix} = \begin{bmatrix}
            \bm{\Phi}_{xx} & \bm{\Phi}_{xy} & \bm{\Phi}_{xu} \\
             \bm{\Phi}_{yx} & \bm{\Phi}_{yy} & \bm{\Phi}_{yu}  \\
            \bm{\Phi}_{ux} & \bm{\Phi}_{uy} & \bm{\Phi}_{uu}
        \end{bmatrix}\begin{bmatrix}
            \bm{\delta}_x \\
            \bm{\delta}_y  \\
            \bm{\delta}_u
        \end{bmatrix},
\end{equation}
where we have $\bm{\Phi}_{xx} = (zI - A - B\mathbf{K}C)^{-1}$ and
\begin{equation} \label{eq:responses}
    \begin{aligned}
        \bm{\Phi}_{xy} &= \bm{\Phi}_{xx}B\mathbf{K},
        & \bm{\Phi}_{xu} &= \bm{\Phi}_{xx}B, \\ \bm{\Phi}_{yx} &= C\bm{\Phi}_{xx},  & \bm{\Phi}_{yy} &= C\bm{\Phi}_{xx}B\mathbf{K} + I,  \\ \bm{\Phi}_{yu} &= C\bm{\Phi}_{xx}B,
        & \bm{\Phi}_{ux} &= \mathbf{K}C\bm{\Phi}_{xx},  \\ \bm{\Phi}_{uy} &= \mathbf{K}(C\bm{\Phi}_{xx}B\mathbf{K} + I),   &\bm{\Phi}_{uu} &= \mathbf{K}C\bm{\Phi}_{xx}B + I.  \\
    \end{aligned}
\end{equation}
We define the closed-loop response transfer matrix as
\begin{equation}\label{eq:PHI}
    \bm{\Phi} :=  \begin{bmatrix}
            \bm{\Phi}_{xx} & \bm{\Phi}_{xy} & \bm{\Phi}_{xu} \\
             \bm{\Phi}_{yx} & \bm{\Phi}_{yy} & \bm{\Phi}_{yu}  \\
            \bm{\Phi}_{ux} & \bm{\Phi}_{uy} & \bm{\Phi}_{uu}
        \end{bmatrix}.
\end{equation}

A notion of external transfer matrix stability is defined as follows.
\begin{definition}[{\cite[Chapter 5]{zhou1996robust}}]
    The closed-loop system is {\emph{disturbance-to-signal stable}} if the closed-loop responses from $(\bm{\delta}_x,\bm{\delta}_y,\bm{\delta}_u)$ to $(\mathbf{x},\mathbf{y},\mathbf{u})$ are all stable, \emph{i.e.}, $\bm{\Phi} \in \mathcal{RH}_{\infty}$.
\end{definition}

\subsection{General plant}

Under Assumption~\ref{assumption:stabilizability}, it is known that the internal stability in Definition 1 and the {disturbance-to-signal} stability in Definition 2 are equivalent~\cite[Chapter 5]{zhou1996robust}, \emph{i.e.}, we have
\begin{equation}
     \mathcal{C}_{\text{stab}} = \{\mathbf{K} \mid \bm{\Phi} \in \mathcal{RH}_{\infty}, \, \text{with}\; \bm{\Phi} \;\text{defined in~\eqref{eq:PHI}} \}.
\end{equation}
In fact, it is sufficient to enforce a subset of elements in $\bm{\Phi}$ to be stable, as shown in~\cite[Lemma 5.3]{zhou1996robust}.
\begin{lemma}\label{lemma:equivalence}
    Under Assumption~\ref{assumption:stabilizability}, $\mathbf{K}$ \emph{internally stabilizes}  $\mathbf{G}$ if and only if the closed-loop responses from $(\bm{\delta}_y,\bm{\delta}_u)$ to $(\mathbf{y},\mathbf{u})$ are stable, \emph{i.e.},
    $$
        \begin{bmatrix}
          \bm{\Phi}_{yy} & \bm{\Phi}_{yu}  \\
            \bm{\Phi}_{uy} & \bm{\Phi}_{uu}
        \end{bmatrix} \in \mathcal{RH}_{\infty}.
    $$
\end{lemma}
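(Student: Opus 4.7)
The plan is to reduce the statement to the state-space internal stability criterion of Lemma~\ref{lemma:internalstability} and the transfer matrix characterization of Lemma~\ref{lemma:3}. The forward direction is an easy consequence of explicit formulas; the reverse direction is the one that uses Assumption~\ref{assumption:stabilizability} in a nontrivial way.

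For the forward ($\Rightarrow$) direction, assume $\mathbf{K}$ internally stabilizes $\mathbf{G}$, so by Lemma~\ref{lemma:internalstability} the matrix $A_{\text{cl}}$ in~\eqref{eq:closed_loop_matrix} is stable. Combining~\eqref{eq:LTI} with~\eqref{eq:state_space_controller} and driving the augmented state $\eta=(x^\tr,\xi^\tr)^\tr$ by $(\delta_y,\delta_u)$ with output $(y,u)$, I would write out the state-space realization
\[
\begin{bmatrix}\bm{\Phi}_{yy} & \bm{\Phi}_{yu}\\ \bm{\Phi}_{uy} & \bm{\Phi}_{uu}\end{bmatrix}
=\left[\begin{array}{c|c} A_{\text{cl}} & B_{\text{cl}}\\ \hline C_{\text{cl}} & D_{\text{cl}}\end{array}\right],
\]
with $B_{\text{cl}}=\begin{bmatrix} BD_k & B\\ B_k & 0\end{bmatrix}$, $C_{\text{cl}}=\begin{bmatrix} C & 0\\ D_kC & C_k\end{bmatrix}$, $D_{\text{cl}}=\begin{bmatrix} I & 0\\ D_k & I\end{bmatrix}$. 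Because $A_{\text{cl}}$ is stable, this realization is in $\mathcal{RH}_\infty$, proving the four-block transfer matrix is stable.

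For the reverse ($\Leftarrow$) direction, I would start from the same state-space realization above and invoke Lemma~\ref{lemma:3}: if the realization $(A_{\text{cl}},B_{\text{cl}},C_{\text{cl}})$ is stabilizable and detectable, then stability of the transfer matrix forces $A_{\text{cl}}$ to be stable, which by Lemma~\ref{lemma:internalstability} yields internal stability. So the real content is to verify stabilizability of $(A_{\text{cl}},B_{\text{cl}})$ and detectability of $(A_{\text{cl}},C_{\text{cl}})$ from Assumption~\ref{assumption:stabilizability}. For stabilizability I would use the PBH test: suppose some left eigenvector $v=(v_1^\tr,v_2^\tr)^\tr$ of $A_{\text{cl}}$ associated with an unstable eigenvalue $\lambda$ satisfies $v^*B_{\text{cl}}=0$; the equations $v_1^*B=0$ and $v_2^*B_k=0$ then drop out, and combined with the eigenvalue relations on the two block rows of $A_{\text{cl}}$, I can show that $v_1$ (resp. $v_2$) is a left eigenvector of $A$ (resp. $A_k$) with the same unstable eigenvalue orthogonal to $B$ (resp. $B_k$), contradicting stabilizability of $(A,B)$ and $(A_k,B_k)$. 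An entirely dual PBH argument, using detectability of $(A,C)$ and $(A_k,C_k)$, handles detectability of $(A_{\text{cl}},C_{\text{cl}})$.

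The main obstacle, and the only step requiring care, is this stabilizability/detectability verification: the PBH eigenvector needs to be decomposed cleanly into the $x$- and $\xi$-blocks, and it must be argued that the off-diagonal couplings $BD_kC$ and $B_kC$ do not obstruct the reduction to eigenvectors of $A$ and $A_k$ individually. Once that is done, the application of Lemmas~\ref{lemma:internalstability} and~\ref{lemma:3} is routine and the lemma follows.
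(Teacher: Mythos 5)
Your proposal is correct and follows essentially the same route as the paper, which establishes this lemma as the equivalence of statements 1) and 4) in Theorem~\ref{Theo:mainresult}: the same realization $\hat{C}_1(zI-A_{\text{cl}})^{-1}\hat{B}_2+\begin{bmatrix} I & 0\\ D_k & I\end{bmatrix}$ is formed, the forward direction is read off from stability of $A_{\text{cl}}$ via Lemma~\ref{lemma:internalstability}, and the converse follows from Lemma~\ref{lemma:3} once $(A_{\text{cl}},\hat{B}_2)$ is shown stabilizable and $(A_{\text{cl}},\hat{C}_1)$ detectable. The only difference is in that last verification step --- the paper exhibits explicit gains (e.g.\ $\hat{F}_1$ rendering $A_{\text{cl}}+\hat{B}_1\hat{F}_1$ block upper-triangular and stable) whereas you use a PBH eigenvector test, and your test does go through: $v^*\hat{B}_2=0$ forces $v_1^*B=0$ and $v_2^*B_k=0$, which annihilates the coupling terms $BD_kC$, $BC_k$, $B_kC$ in the left-eigenvector relations and reduces them to $v_1^*A=\lambda v_1^*$, $v_2^*A_k=\lambda v_2^*$, with the dual computation handling detectability.
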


For notational simplicity, we denote
$$
    \left(\begin{bmatrix}
            \bm{\delta}_y \\
            \bm{\delta}_u  \\
        \end{bmatrix} \rightarrow \begin{bmatrix}
            \mathbf{y} \\
            \mathbf{u} \\
        \end{bmatrix}\right) := \begin{bmatrix}
          \bm{\Phi}_{yy} & \bm{\Phi}_{yu}  \\
            \bm{\Phi}_{uy} & \bm{\Phi}_{uu}
        \end{bmatrix}.
$$
The result in Lemma~\ref{lemma:equivalence} motivates the question of whether we can select different {minimal sets of} elements in $\bm{\Phi}$ for internal stability. For example, if the closed-loop responses from $(\bm{\delta}_x, \bm{\delta}_y)$ to $(\mathbf{x}, \mathbf{y})$ are stable, \emph{i.e.},
$$
    \left(\begin{bmatrix}
            \bm{\delta}_x \\
            \bm{\delta}_y  \\
        \end{bmatrix} \rightarrow \begin{bmatrix}
            \mathbf{x} \\
            \mathbf{y} \\
        \end{bmatrix}\right) := \begin{bmatrix}
          \bm{\Phi}_{xx} & \bm{\Phi}_{xy}  \\
            \bm{\Phi}_{yx} & \bm{\Phi}_{yy}
        \end{bmatrix} \in \mathcal{RH}_{\infty},
$$
can we guarantee {that} the closed-loop system is internally stable? The answer is negative, as proved in Theorem~\ref{Theo:mainresult} below.

In particular, we consider all possible combinations of four closed-loop responses that may guarantee internal stability. When choosing two disturbances and two outputs from~\eqref{eq:closedloop}, we have in total  ${3 \choose 2} \times {3 \choose 2} = 9$ choices, \emph{i.e.},
\begin{equation} \label{eq:input-ouput}
     \begin{aligned}
        {\color{black}\left(\begin{bmatrix}
            \bm{\delta}_x \\
            \bm{\delta}_y  \\
        \end{bmatrix} \rightarrow \begin{bmatrix}
            \mathbf{x} \\
            \mathbf{y} \\
        \end{bmatrix}\right)}, &{\color{blue}\left(\begin{bmatrix}
            \bm{\delta}_x \\
            \bm{\delta}_y  \\
        \end{bmatrix} \rightarrow \begin{bmatrix}
            \mathbf{y} \\
            \mathbf{u} \\
        \end{bmatrix}\right)},  {\color{blue}\left(\begin{bmatrix}
            \bm{\delta}_x \\
            \bm{\delta}_y  \\
        \end{bmatrix} \rightarrow \begin{bmatrix}
            \mathbf{x} \\
            \mathbf{u} \\
        \end{bmatrix}\right)},  \\
         {\color{black}\left(\begin{bmatrix}
            \bm{\delta}_y \\
            \bm{\delta}_u  \\
        \end{bmatrix} \rightarrow \begin{bmatrix}
            \mathbf{x} \\
            \mathbf{y} \\
        \end{bmatrix}\right)}, &{\color{blue}\left(\begin{bmatrix}
            \bm{\delta}_y \\
            \bm{\delta}_u  \\
        \end{bmatrix} \rightarrow \begin{bmatrix}
            \mathbf{y} \\
            \mathbf{u} \\
        \end{bmatrix}\right)},  {\color{blue}\left(\begin{bmatrix}
            \bm{\delta}_y \\
            \bm{\delta}_u  \\
        \end{bmatrix} \rightarrow \begin{bmatrix}
            \mathbf{x} \\
            \mathbf{u} \\
        \end{bmatrix}\right)}, \\
         {\color{black}\left(\begin{bmatrix}
            \bm{\delta}_x \\
            \bm{\delta}_u  \\
        \end{bmatrix} \rightarrow \begin{bmatrix}
            \mathbf{x} \\
            \mathbf{y} \\
        \end{bmatrix}\right)}, &{\color{black}\left(\begin{bmatrix}
            \bm{\delta}_x \\
            \bm{\delta}_u  \\
        \end{bmatrix} \rightarrow \begin{bmatrix}
            \mathbf{y} \\
            \mathbf{u} \\
        \end{bmatrix}\right)},  {\color{black}\left(\begin{bmatrix}
            \bm{\delta}_x \\
            \bm{\delta}_u  \\
        \end{bmatrix} \rightarrow \begin{bmatrix}
            \mathbf{x} \\
            \mathbf{u} \\
        \end{bmatrix}\right)}.
    \end{aligned}
\end{equation}
Note that it is in general not sufficient to select less than four close-loop responses since there are two dynamical parts in system~\eqref{eq:LTI} and controller~\eqref{eq:state_space_controller}.
One main result of this section shows that the stability of any of the groups of four closed-loop responses in the top-right corner of~\eqref{eq:input-ouput}, highlighted in blue, is equivalent to internal stability.
\begin{theorem}\label{Theo:mainresult}
  Consider the LTI system~\eqref{eq:LTI}, evolving under a dynamic control policy~\eqref{eq:state_space_controller}. Under Assumption 1, the following statements are equivalent:
   \begin{enumerate}
   \setlength\itemsep{1mm}
       \item $\mathbf{K}$ internally stabilizes $\mathbf{G}$;
       \item $\left(\begin{bmatrix}
            \bm{\delta}_x \\
            \bm{\delta}_y  \\
        \end{bmatrix} \rightarrow \begin{bmatrix}
            \mathbf{y} \\
            \mathbf{u} \\
        \end{bmatrix}\right) \in \mathcal{RH}_{\infty}$;
        \item $\left(\begin{bmatrix}
            \bm{\delta}_x \\
            \bm{\delta}_y  \\
        \end{bmatrix} \rightarrow \begin{bmatrix}
            \mathbf{x} \\
            \mathbf{u} \\
        \end{bmatrix}\right)\in \mathcal{RH}_{\infty}$;

        \item $\left(\begin{bmatrix}
            \bm{\delta}_y \\
            \bm{\delta}_u  \\
        \end{bmatrix} \rightarrow \begin{bmatrix}
            \mathbf{y} \\
            \mathbf{u} \\
        \end{bmatrix}\right)\in \mathcal{RH}_{\infty}$;

        \item $\left(\begin{bmatrix}
            \bm{\delta}_y \\
            \bm{\delta}_u  \\
        \end{bmatrix} \rightarrow \begin{bmatrix}
            \mathbf{x} \\
            \mathbf{u} \\
        \end{bmatrix}\right)\in \mathcal{RH}_{\infty}$.
   \end{enumerate}
   {Moreover, the stability of any other group of four closed-loop responses in~\eqref{eq:input-ouput} is not sufficient for internal stability.}
\end{theorem}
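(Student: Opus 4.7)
The plan is to reduce every implication in the statement to Lemma~\ref{lemma:3}. For each of the nine input-output selections in~\eqref{eq:input-ouput}, the corresponding transfer matrix admits a state-space realization of the form $\bigl(A_{\text{cl}},B_{\text{cl},i},C_{\text{cl},i},D_{\text{cl},i}\bigr)$, where $A_{\text{cl}}$ is the fixed matrix in~\eqref{eq:closed_loop_matrix} and $(B_{\text{cl},i},C_{\text{cl},i},D_{\text{cl},i})$ is read off from which pair of disturbances and which pair of signals has been retained. The direction $(1)\Rightarrow(2),(3),(4),(5)$ is then immediate, since stability of $A_{\text{cl}}$ makes every transfer matrix sharing this $A$-matrix lie in $\mathcal{RH}_\infty$, and $(4)\Rightarrow(1)$ is precisely Lemma~\ref{lemma:equivalence}.

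For $(2)\Rightarrow(1)$, $(3)\Rightarrow(1)$ and $(5)\Rightarrow(1)$, my plan is to verify via the PBH criterion that $(A_{\text{cl}},B_{\text{cl},i})$ is stabilizable and $(A_{\text{cl}},C_{\text{cl},i})$ is detectable for $i\in\{2,3,5\}$; Lemma~\ref{lemma:3} will then convert stability of the transfer matrix into stability of $A_{\text{cl}}$, which is internal stability by Lemma~\ref{lemma:internalstability}. Each PBH condition collapses cleanly. For instance in case~(2), any left eigenvector $w$ of $A_{\text{cl}}$ partitioned as $(w_1,w_2)$ with $w^{\tr}B_{\text{cl},2}=0$ is forced by the block form of $B_{\text{cl},2}$ to satisfy $w_1=0$ and $w_2^{\tr}B_k=0$; the eigenvalue equation then reduces to $w_2^{\tr}A_k=\lambda\, w_2^{\tr}$, and stabilizability of $(A_k,B_k)$ from Assumption~\ref{assumption:stabilizability} forces $\lambda$ to be stable. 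The other five PBH checks follow the same template, invoking the four parts of Assumption~\ref{assumption:stabilizability} in the right combinations; Assumption~\ref{assumption:stabilizability} is in fact tailored precisely so that all six PBH computations close.

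For the exclusivity statement, the plan is to exhibit, for each of the five ``black'' groups in~\eqref{eq:input-ouput}, a plant--controller pair obeying Assumption~\ref{assumption:stabilizability} for which $A_{\text{cl}}$ carries an unstable eigenvalue $\lambda^\star$ that is hidden in the selected transfer matrix, meaning that it is either unreachable from $B_{\text{cl},i}$ or unobservable from $C_{\text{cl},i}$. The block structure of $B_{\text{cl},i}$ and $C_{\text{cl},i}$ predicts where such hiding is possible: when $\mathbf{u}$ is absent from the output list, an unstable right eigenvector $v_2$ of $A_k$ becomes unobservable provided that $BC_k v_2=0$ while $C_k v_2\neq 0$; when $\bm{\delta}_y$ is absent from the input list, an unstable left eigenvector $w_2$ of $A_k$ becomes unreachable provided that $w_2^{\tr}B_k C=0$. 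Both mechanisms can be realised with tiny plants and controllers of the form $A=0$, $A_k=2$, $B_k=1$ in which $B$ or $C$ is chosen to have a nontrivial nullspace; I expect that two such examples, one of each flavour, will cover all five bad groups without violating Assumption~\ref{assumption:stabilizability}.

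The main obstacle I anticipate is the bookkeeping in this exclusivity step: for every bad group one has to check simultaneously Assumption~\ref{assumption:stabilizability}, instability of $A_{\text{cl}}$, and stability of the four selected entries of $\bm{\Phi}$ in~\eqref{eq:PHI}, all of which hinges on correctly identifying, in each configuration, the row of $C_{\text{cl},i}$ or the column of $B_{\text{cl},i}$ that vanishes. The equivalence portion, by contrast, is essentially mechanical once the realization is written down, because the block structure of~\eqref{eq:closed_loop_matrix} aligns perfectly with Assumption~\ref{assumption:stabilizability} in every PBH computation.
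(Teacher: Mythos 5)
Your proposal is correct and follows essentially the same route as the paper: both reduce everything to the joint realization with $A_{\text{cl}}$, prove the converse implications by establishing stabilizability/detectability of $(A_{\text{cl}},\hat{B}_i,\hat{C}_i)$ and invoking Lemma~\ref{lemma:3}, and dispose of the remaining five groups by exhibiting hidden unstable modes. The only differences are cosmetic: you verify stabilizability/detectability via PBH tests where the paper constructs explicit stabilizing feedback gains, and your exclusivity step is actually more concrete than the paper's, which merely observes that the offending realizations are ``not detectable in general'' without writing down the scalar counterexamples you sketch.
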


\begin{proof}
The idea of our proof is to use a state-space representation of the closed-loop system, which is motivated by~\cite[Lemma 5.3]{zhou1996robust}.
From~\eqref{eq:LTI} and~\eqref{eq:state_space_controller}, we have
\begin{equation} \label{eq:statespace_s1}
    \begin{bmatrix} x[t+1] \\\xi[t+1] \end{bmatrix} \! =\!
    \begin{bmatrix} A & 0 \\ 0 & A_k \end{bmatrix}\!
    \begin{bmatrix} x[t] \\\xi[t] \end{bmatrix} \!+\!
    \begin{bmatrix} B & 0\\ 0 & B_k \end{bmatrix}\!
    \begin{bmatrix} u[t] \\y[t] \end{bmatrix} \!+\! \begin{bmatrix} \delta_x[t] \\0 \end{bmatrix},
\end{equation}
and
\begin{equation} \label{eq:statespace_s2}
    \begin{bmatrix} I & -D_k \\ 0 & I \end{bmatrix}\begin{bmatrix} u[t] \\y[t] \end{bmatrix} = \begin{bmatrix} 0 & C_k \\ C & 0 \end{bmatrix}\begin{bmatrix} x[t] \\\xi[t] \end{bmatrix} + \begin{bmatrix} \delta_u[t] \\\delta_y[t] \end{bmatrix}.
\end{equation}
Substituting~\eqref{eq:statespace_s2} into~\eqref{eq:statespace_s1} leads to
\begin{equation*} 
    \begin{bmatrix}
    x[t+1] \\
    \xi[t+1]
    \end{bmatrix} = A_{\text{cl}}\begin{bmatrix}
    x[t] \\
    \xi[t]
    \end{bmatrix}
     + \begin{bmatrix}
    B & BD_k \\
    0 & B_k
    \end{bmatrix} \begin{bmatrix}
    \delta_u[t] \\
    \delta_{y}[t]
    \end{bmatrix} + \begin{bmatrix}
    \delta_x[t] \\
    0
    \end{bmatrix},
    \end{equation*}
\emph{i.e.}, we have
\begin{equation} \label{eq:state_space_s3}
\begin{aligned}
    &\begin{bmatrix}
    x[t+1] \\
    \xi[t+1]
    \end{bmatrix} = A_{\text{cl}}\begin{bmatrix}
    x[t] \\
    \xi[t]
    \end{bmatrix}
    + \begin{bmatrix} I & BD_k & B \\
                     0  & B_k & 0
                     \end{bmatrix} \begin{bmatrix} \delta_x[t] \\ \delta_y[t] \\ \delta_u[t] \end{bmatrix}, \\
     &\begin{bmatrix} x[t] \\ y[t] \\ u[t] \end{bmatrix} = \begin{bmatrix}
    I & 0 \\
    C & 0 \\
    D_kC  & C_k
    \end{bmatrix}  \begin{bmatrix}
    x[t] \\
    \xi[t]
    \end{bmatrix}  + \begin{bmatrix}
    0 & 0 & 0 \\
    0 & I & 0\\
    0  & D_k & I
    \end{bmatrix}  \begin{bmatrix} \delta_x[t] \\ \delta_y[t] \\ \delta_u[t] \end{bmatrix}.
\end{aligned}
    \end{equation}
Therefore, the closed-loop responses from $(\bm{\delta}_x,\bm{\delta}_y,\bm{\delta}_u) \rightarrow (\mathbf{x},\mathbf{y},\mathbf{u})$ are
\begin{equation} \label{eq:CLresponses}
        \begin{bmatrix}
    I & 0 \\
    C & 0 \\
    D_kC  & C_k
    \end{bmatrix} (zI - A_{\text{cl}})^{-1}\begin{bmatrix} I & BD_k & B \\
                     0  & B_k & 0
                     \end{bmatrix}  + \begin{bmatrix}
    0 & 0 & 0 \\
    0 & I & 0\\
    0  & D_k & I
    \end{bmatrix},
\end{equation}
    from which, we get state-space realizations of the following closed-loop responses
    \begin{subequations} \label{eq:clstatespace}
    \begin{align}
    \left(\begin{bmatrix}
            \bm{\delta}_x \\
            \bm{\delta}_y  \\
        \end{bmatrix} \rightarrow \begin{bmatrix}
            \mathbf{y} \\
            \mathbf{u} \\
        \end{bmatrix}\right) &=  \hat{C}_1 (zI - A_{\text{cl}})^{-1}\hat{B}_1  + \begin{bmatrix}
    0 & I \\
    0  & D_k
    \end{bmatrix},\label{eq:clxy2yu}\\
    \left(\begin{bmatrix}
            \bm{\delta}_x \\
            \bm{\delta}_y  \\
        \end{bmatrix} \rightarrow \begin{bmatrix}
            \mathbf{x} \\
            \mathbf{u} \\
        \end{bmatrix}\right) &=  \hat{C}_2 (zI - A_{\text{cl}})^{-1}\hat{B}_1  + \begin{bmatrix}
    0 & 0 \\
    0  & D_k
    \end{bmatrix},\label{eq:clxy2xu}\\
    \left(\begin{bmatrix}
            \bm{\delta}_y \\
            \bm{\delta}_u  \\
        \end{bmatrix} \rightarrow \begin{bmatrix}
            \mathbf{y} \\
            \mathbf{u} \\
        \end{bmatrix}\right) &=  \hat{C}_1 (zI - A_{\text{cl}})^{-1}\hat{B}_2  + \begin{bmatrix}
    I & 0 \\
    D_k & I
    \end{bmatrix}, \label{eq:clyu2yu}
    \\
     \left(\begin{bmatrix}
            \bm{\delta}_y \\
            \bm{\delta}_u  \\
        \end{bmatrix} \rightarrow \begin{bmatrix}
            \mathbf{x} \\
            \mathbf{u} \\
        \end{bmatrix}\right) &=  \hat{C}_2 (zI - A_{\text{cl}})^{-1}\hat{B}_2  + \begin{bmatrix}
    0 & 0 \\
    D_k & I
    \end{bmatrix} \label{eq:clyu2xu},
    \end{align}
    \end{subequations}
where
\begin{equation} \label{eq:clmatrices}
\begin{aligned}
\hat{B}_1 &= \begin{bmatrix} I & BD_k  \\
    0  & B_k
    \end{bmatrix}, & \hat{B}_2 &= \begin{bmatrix} BD_k & B  \\
    B_k & 0
    \end{bmatrix}, \; \\
    \hat{C}_1 &= \begin{bmatrix} C & 0  \\
    D_kC  & C_k
    \end{bmatrix}, &\hat{C}_2 &= \begin{bmatrix} I  & 0  \\
    D_kC & C_k
    \end{bmatrix}.
\end{aligned}
\end{equation}

By Lemma 1, we know that $\mathbf{K}$ internally stabilizes $\mathbf{G}$ if and only if the closed-loop matrix $A_{\text{cl}}$ defined in~\eqref{eq:closed_loop_matrix} is stable. It is obvious true that $(1)\Rightarrow(2)$, $(1)\Rightarrow(3)$, $(1)\Rightarrow(4)$, and $(1)\Rightarrow(5)$.

Next, we prove if anyone of $(2)$ -- $(5)$ is true, the matrix $A_{\text{cl}}$ is stable. According to Lemma~\ref{lemma:3}, it remains to prove that the state-space realizations~\eqref{eq:clxy2yu}--\eqref{eq:clyu2xu} are all stabilizable and detectable. This is equivalent to {showing} that
$
(A_{\text{cl}},\hat{B}_1), (A_{\text{cl}},\hat{B}_2)
$
are stabilizable and that $
    (A_{\text{cl}},\hat{C}_1), (A_{\text{cl}},\hat{C}_2)
$ are detectable.
We let
$
 \hat{F}_1 = \begin{bmatrix} F & 0 \\ -C & F_k \end{bmatrix}
$
where $F$ and $F_k$ are chosen such that $A + F$ and $A_k + B_kF_k$ are stable (since $(A, B_k)$ is stabilizable).
Then, we have that
$$
 A_{\text{cl}} + \hat{B}_1\hat{F}_1 = \begin{bmatrix} A + F & BC_k+BD_kF_k \\
 0 & A_k + B_kF_k\end{bmatrix}
$$
is stable, and thus $(A_{\text{cl}}, \hat{B}_1)$ is stablizable. Similar arguments show that $(A_{\text{cl}},\hat{B}_2)$ is stabilizable, and $
    (A_{\text{cl}},\hat{C}_1)$, $(A_{\text{cl}},\hat{C}_2)
$ are detectable.

For the second part {of} Theorem~\ref{Theo:mainresult}, we first prove that the stability of  $\left(\begin{bmatrix}
            \bm{\delta}_x \\
            \bm{\delta}_y  \\
        \end{bmatrix} \rightarrow \begin{bmatrix}
            \mathbf{x} \\
            \mathbf{y} \\
        \end{bmatrix}\right)$ is not sufficient for internal stability.
        From~\eqref{eq:CLresponses}, a state-space realization of the closed-loop responses from $(\bm{\delta}_x,\bm{\delta}_y)$ to $(\mathbf{x},\mathbf{y})$ is
        \begin{equation*}
           \left(\begin{bmatrix}
            \bm{\delta}_x \\
            \bm{\delta}_y  \\
        \end{bmatrix} \rightarrow \begin{bmatrix}
            \mathbf{x} \\
            \mathbf{y} \\
        \end{bmatrix}\right)= \begin{bmatrix}
       I & 0\\
        C & 0
    \end{bmatrix}(zI - A_{\text{cl}})^{-1}\hat{B}_1 +\begin{bmatrix}
       0 & 0\\
       0 & I
    \end{bmatrix}.
        \end{equation*}
    Since
    $
    \left(A_{\text{cl}}, \begin{bmatrix}
       I & 0\\
        C & 0
    \end{bmatrix}\right)
    $
    is not detectable in general, the stability of $\left(\begin{bmatrix}
            \bm{\delta}_x \\
            \bm{\delta}_y  \\
        \end{bmatrix} \rightarrow \begin{bmatrix}
            \mathbf{x} \\
            \mathbf{y} \\
        \end{bmatrix}\right)$ cannot guarantee the stability of $A_{\text{cl}}$. Therefore, it  is not sufficient for internal stability either. The other claims can be proved in a similar way: the corresponding state-space realization of the closed-loop transfer matrix is not stabilizable {and/}or detectable.
\end{proof}

As shown in Theorem~\ref{Theo:mainresult}, to guarantee internal stability for general plants, {it is always required} to select $\bm{\delta}_y$ as an input and $\mathbf{u}$ as an output, leading to four possible groups of closed-loop responses. The groups of closed-loop responses in~\eqref{eq:input-ouput}, except those in blue, do not have either $\bm{\delta}_y$ or $\mathbf{u}$, and thus fail to guarantee internal stability.
%
Note that Theorem~\ref{Theo:mainresult} is exclusive in the sense that there exist no other combinations of stable closed-loop responses that are equivalent to internal stability, and  Lemma~\ref{lemma:equivalence} is included as the equivalence between 1) and 4) in Theorem~\ref{Theo:mainresult}. Thus, Theorem~\ref{Theo:mainresult} offers a complete picture between internal stability and stable closed-loop responses.

\subsection{Two special cases: Stable plants and State feedback }

Here, we show that the transfer matrix characterization of internal stability can be simplified for special cases: 1) open-loop stable plants; 2) the state feedback case. To guarantee internal stability, instead of considering four closed-loop responses in Theorem~\ref{Theo:mainresult}, the stability of one particular closed-loop response is sufficient in the case of open-loop stable plants, and the stability of two particular closed-loop responses is sufficient in the state feedback case.

The following result is classical, which is the same as~\cite[Corollary 5.5]{zhou1996robust}. For completeness, we provide a proof from a state-space perspective.

\begin{corollary} \label{prop:stable}
    Consider the LTI system~\eqref{eq:LTI}, evolving under a dynamic control policy~\eqref{eq:state_space_controller}. If the LTI system is open-loop stable (i.e., $A$ is stable), then $\mathbf{K} \in \mathcal{C}_{\text{stab}}$ if and only if $(\bm{\delta}_y \rightarrow \mathbf{u}):= \bm{\Phi}_{uy} \in \mathcal{RH}_{\infty}$.
\end{corollary}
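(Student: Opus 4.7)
My plan is to prove each direction separately and to reduce everything to Lemma~\ref{lemma:equivalence} (equivalently, the 1)$\Leftrightarrow$4) part of Theorem~\ref{Theo:mainresult}), so that no fresh state-space argument is needed. The open-loop stability hypothesis ($A$ stable, hence $\mathbf{G}=C(zI-A)^{-1}B\in\mathcal{RH}_\infty$) will enter only in the reverse direction, where it lets me bootstrap from a single closed-loop block $\bm{\Phi}_{uy}$ to all four blocks of the $(\bm{\delta}_y,\bm{\delta}_u)\to(\mathbf{y},\mathbf{u})$ map.

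The forward (``only if'') direction is immediate and requires no open-loop stability of the plant: assuming $\mathbf{K}\in\mathcal{C}_{\text{stab}}$, Lemma~\ref{lemma:equivalence} gives
$$
\begin{bmatrix} \bm{\Phi}_{yy} & \bm{\Phi}_{yu}\\ \bm{\Phi}_{uy} & \bm{\Phi}_{uu}\end{bmatrix}\in\mathcal{RH}_\infty,
$$
so in particular $\bm{\Phi}_{uy}\in\mathcal{RH}_\infty$. For the reverse (``if'') direction, assume $A$ is stable and $\bm{\Phi}_{uy}\in\mathcal{RH}_\infty$. From the loop equations $\mathbf{y}=\mathbf{G}\mathbf{u}+\bm{\delta}_y$ and $\mathbf{u}=\mathbf{K}\mathbf{y}+\bm{\delta}_u$, solving for $\mathbf{u}$ yields $\mathbf{u}=(I-\mathbf{K}\mathbf{G})^{-1}(\mathbf{K}\bm{\delta}_y+\bm{\delta}_u)$, so that $\bm{\Phi}_{uy}=(I-\mathbf{K}\mathbf{G})^{-1}\mathbf{K}$ and $\bm{\Phi}_{uu}=(I-\mathbf{K}\mathbf{G})^{-1}$. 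Substituting back gives the standard feedback identities
\begin{align*}
\bm{\Phi}_{yy} &= I+\mathbf{G}\bm{\Phi}_{uy},\\
\bm{\Phi}_{uu} &= I+\bm{\Phi}_{uy}\mathbf{G},\\
\bm{\Phi}_{yu} &= \mathbf{G}+\mathbf{G}\bm{\Phi}_{uy}\mathbf{G}.
\end{align*}
Because $\mathbf{G}\in\mathcal{RH}_\infty$ (since $A$ is stable) and $\bm{\Phi}_{uy}\in\mathcal{RH}_\infty$ by hypothesis, each of the three right-hand sides is a sum/product of stable transfer matrices and thus lies in $\mathcal{RH}_\infty$. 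Hence the entire block $(\bm{\delta}_y,\bm{\delta}_u)\to(\mathbf{y},\mathbf{u})$ is stable, and a second application of Lemma~\ref{lemma:equivalence} gives $\mathbf{K}\in\mathcal{C}_{\text{stab}}$.

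There is no substantive obstacle: the argument is purely algebraic and relies only on closure of $\mathcal{RH}_\infty$ under sums and products plus the previously established Lemma~\ref{lemma:equivalence}. The only minor bookkeeping point is the commutation $\mathbf{K}\mathbf{G}(I-\mathbf{K}\mathbf{G})^{-1}=(I-\mathbf{K}\mathbf{G})^{-1}\mathbf{K}\mathbf{G}$ used to rewrite $\bm{\Phi}_{uu}=I+\bm{\Phi}_{uy}\mathbf{G}$, which follows from the fact that $\mathbf{K}\mathbf{G}$ commutes with any rational function of itself. The conceptual takeaway is that open-loop stability collapses the four independent conditions of Theorem~\ref{Theo:mainresult} to a single scalar-like one on $\bm{\Phi}_{uy}$, because $\mathbf{G}$ itself is no longer an obstacle to reconstructing the remaining closed-loop maps.
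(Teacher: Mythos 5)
Your proof is correct, but it takes a different route from the paper's. You work entirely in the frequency domain: starting from $\bm{\Phi}_{uy}=(I-\mathbf{K}\mathbf{G})^{-1}\mathbf{K}$ you reconstruct the remaining three blocks via $\bm{\Phi}_{yy}=I+\mathbf{G}\bm{\Phi}_{uy}$, $\bm{\Phi}_{uu}=I+\bm{\Phi}_{uy}\mathbf{G}$ and $\bm{\Phi}_{yu}=\mathbf{G}\bm{\Phi}_{uu}$, use closure of $\mathcal{RH}_\infty$ under sums and products together with $\mathbf{G}\in\mathcal{RH}_\infty$, and then invoke Lemma~\ref{lemma:equivalence} to conclude internal stability. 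The paper instead argues in state space: it writes $\bm{\Phi}_{uy}$ as a realization over the closed-loop matrix $A_{\text{cl}}$ with input matrix $\begin{bmatrix}BD_k^\tr & B_k^\tr\end{bmatrix}^\tr$ and output matrix $\begin{bmatrix}D_kC & C_k\end{bmatrix}$, shows that this pair is stabilizable and detectable precisely because $A$ is stable, and then applies Lemma~\ref{lemma:3} so that stability of the \emph{single} transfer matrix $\bm{\Phi}_{uy}$ directly forces $A_{\text{cl}}$ to be stable. Both arguments are sound; yours is the classical input-output bootstrap (essentially the argument behind \cite[Corollary~5.5]{zhou1996robust} and consistent with how the paper itself proves Proposition~\ref{prop:slp3}), while the paper's choice is deliberately state-space so as to be uniform with the proofs of Theorem~\ref{Theo:mainresult} and Corollary~\ref{prop:state}. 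One small stylistic note: the commutation $\bm{\Phi}_{uu}=I+\bm{\Phi}_{uy}\mathbf{G}$ you flag is indeed just $(I-X)^{-1}=I+(I-X)^{-1}X$ with $X=\mathbf{K}\mathbf{G}$, and your appeal to Lemma~\ref{lemma:equivalence} of course carries the standing Assumption~\ref{assumption:stabilizability}, which the corollary inherits.
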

\begin{proof}
    The ``only if'' direction is true by definition. We now prove the sufficiency. From~\eqref{eq:CLresponses}, we have
    $$
        \bm{\Phi}_{uy} = \begin{bmatrix}
       D_kC  & C_k
    \end{bmatrix} (zI - A_{\text{cl}})^{-1}\begin{bmatrix}  BD_k \\
     B_k
                     \end{bmatrix}  + D_k.
    $$
    Considering the fact that the following matrix
    $$
        A_{\text{cl}} + \begin{bmatrix}  BD_k \\
     B_k
                     \end{bmatrix}\begin{bmatrix}  -C &   F_k
                     \end{bmatrix} = \begin{bmatrix}
                     A & BC_k+BD_kF_k \\ 0 & A_k + B_kF_k
                     \end{bmatrix},
    $$
    is stable when $A$ and $A_k +B_kF_k$ are stable, we know that $\left(A_{\text{cl}},\begin{bmatrix}  BD_k \\
     B_k
                     \end{bmatrix}\right)$ is stabilizable. Similarly, we can show that $\left(A_{\text{cl}},\begin{bmatrix}  D_kC &
     C_k                \end{bmatrix}\right)$ is detectable. Therefore, if $\bm{\Phi}_{uy} \in \mathcal{RH}_{\infty}$, we have $A_{\text{cl}}$ is stable, meaning that $\mathbf{K} \in \mathcal{C}_{\text{stab}}$. 
\end{proof}

In the state feedback case, we have the following result.
\begin{corollary} \label{prop:state}
    Consider the LTI system~\eqref{eq:LTI}, evolving under a dynamic control policy~\eqref{eq:state_space_controller}. If $C = I$, then $\mathbf{K} \in \mathcal{C}_{\text{stab}}$ if and only if $\left(\bm{\delta}_x \rightarrow \begin{bmatrix} \mathbf{x} \\\mathbf{u} \end{bmatrix}\right):=\begin{bmatrix}\bm{\Phi}_{xx} \\ \bm{\Phi}_{ux} \end{bmatrix} \in \mathcal{RH}_{\infty}$.
\end{corollary}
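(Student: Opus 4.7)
The plan is to mirror the proof of Corollary~\ref{prop:stable}: the ``only if'' direction is immediate from the definition of $\mathcal{C}_{\text{stab}}$, and for the ``if'' direction I would extract a state-space realization of $\begin{bmatrix}\bm{\Phi}_{xx}\\\bm{\Phi}_{ux}\end{bmatrix}$ from the closed-loop realization~\eqref{eq:CLresponses} established in the proof of Theorem~\ref{Theo:mainresult}, and then invoke Lemma~\ref{lemma:3} after verifying stabilizability and detectability of that realization. Once this is done, membership of the transfer matrix in $\mathcal{RH}_\infty$ forces $A_{\text{cl}}$ to be stable, and hence $\mathbf{K}\in\mathcal{C}_{\text{stab}}$ by Lemma~\ref{lemma:internalstability}.

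Specializing~\eqref{eq:CLresponses} to $C = I$ and restricting to the $\bm{\delta}_x \to (\mathbf{x},\mathbf{u})$ block produces
\begin{equation*}
\begin{bmatrix}\bm{\Phi}_{xx}\\\bm{\Phi}_{ux}\end{bmatrix} = \begin{bmatrix} I & 0 \\ D_k & C_k \end{bmatrix}(zI-A_{\text{cl}})^{-1}\begin{bmatrix} I \\ 0 \end{bmatrix}.
\end{equation*}
So the two properties to verify are: (i) the pair $\bigl(A_{\text{cl}},\,\bigl[\begin{smallmatrix} I\\0\end{smallmatrix}\bigr]\bigr)$ is stabilizable; (ii) the pair $\bigl(A_{\text{cl}},\,\bigl[\begin{smallmatrix} I & 0\\ D_k & C_k\end{smallmatrix}\bigr]\bigr)$ is detectable.

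For (i), the intuition is that the input matrix $\bigl[\begin{smallmatrix}I\\0\end{smallmatrix}\bigr]$ drives the plant-state channel directly, so $x$ can serve as a ``virtual control'' for the otherwise unactuated controller dynamics $\xi^+ = A_k\xi + B_k x$. Since $(A_k,B_k)$ is stabilizable by Assumption~\ref{assumption:stabilizability}, I would pick $F_k$ with $A_k + B_kF_k$ stable and try $\hat F = \bigl[\,-A - BD_k + F_kB_k\;\; -BC_k + F_kA_k\,\bigr]$; a short block-similarity computation via $T=\bigl[\begin{smallmatrix} I & -F_k\\0 & I\end{smallmatrix}\bigr]$ should show that $A_{\text{cl}} + \bigl[\begin{smallmatrix}I\\0\end{smallmatrix}\bigr]\hat F$ is similar to a block-triangular matrix with diagonal blocks $0$ and $A_k + B_kF_k$, hence stable. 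For (ii), dually, the top block-row $[\,I\;\;0\,]$ of the output already reveals $x$ fully, so $\xi$ can be observed through $C_k$ up to a computable correction. Using $L_k$ with $A_k + L_kC_k$ stable (which exists since $(A_k,C_k)$ is detectable by Assumption~\ref{assumption:stabilizability}), I expect $\hat L = \bigl[\begin{smallmatrix} -A & -B\\ -B_k - L_kD_k & L_k\end{smallmatrix}\bigr]$ to make $A_{\text{cl}} + \hat L\bigl[\begin{smallmatrix} I & 0\\ D_k & C_k\end{smallmatrix}\bigr]$ collapse to $\text{blkdiag}(0,\,A_k + L_kC_k)$ by direct term cancellation.

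The main obstacle I anticipate is purely algebraic bookkeeping: identifying the block entries of $\hat F$ and $\hat L$ so that the cross-terms in $A_{\text{cl}}$ cancel cleanly. Once those are pinned down, the rest of the argument is the same state-space template already used for Corollary~\ref{prop:stable}, and the conclusion follows from Lemma~\ref{lemma:3}.
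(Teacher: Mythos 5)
Your proposal is correct and follows essentially the same route as the paper: both reduce to the state-space realization of $\bigl(\bm{\delta}_x \rightarrow (\mathbf{x},\mathbf{u})\bigr)$ extracted from~\eqref{eq:CLresponses}, verify stabilizability of $\bigl(A_{\text{cl}},[\,I\;\;0\,]^\tr\bigr)$ and detectability of $(A_{\text{cl}},\hat{C}_2)$, and invoke Lemma~\ref{lemma:3} together with Lemma~\ref{lemma:internalstability}. The only (inessential) difference is that you exhibit explicit feedback and output-injection gains for both properties --- and your choices do cancel the cross-terms as claimed, yielding diagonal blocks $0$ and $A_k+B_kF_k$ (resp.\ $A_k+L_kC_k$) --- whereas the paper proves stabilizability in Appendix~\ref{app:statefeedback} with a slightly different gain and cites detectability from the proof of Theorem~\ref{Theo:mainresult}.
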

\begin{proof}
    When $C = I$, from~\eqref{eq:CLresponses}, we have
    $$
        \begin{bmatrix}\bm{\Phi}_{xx} \\ \bm{\Phi}_{ux} \end{bmatrix} = \begin{bmatrix} I & 0\\
       D_k & C_k
    \end{bmatrix} \left(zI -  \begin{bmatrix} A + BD_k & BC_k \\
        B_k &A_k
        \end{bmatrix}\right)^{-1}\begin{bmatrix}  I \\
     0
                     \end{bmatrix},
    $$
    From the proof of Theorem~\ref{Theo:mainresult}, we know
    $$
        \left(\begin{bmatrix} A + BD_k & BC_k \\
        B_k &A_k
        \end{bmatrix}, \begin{bmatrix} I & 0\\
       D_k & C_k
    \end{bmatrix}\right)
    $$
    is detectable. It is not difficult to prove that
    \begin{equation} \label{eq:statecontrollable}
        \left(\begin{bmatrix} A + BD_k & BC_k \\
        B_k &A_k
        \end{bmatrix}, \begin{bmatrix} I \\
       0
    \end{bmatrix}\right)
    \end{equation}
    is stabilizable (see Appendix~\ref{app:statefeedback} for details).
    Therefore, if $\bm{\Phi}_{xx} \in \mathcal{RH}_{\infty}, \bm{\Phi}_{ux}  \in \mathcal{RH}_{\infty}$, we have $A_{\text{cl}}$ is stable, meaning that $\mathbf{K} \in \mathcal{C}_{\text{stab}}$.
\end{proof}

The result in Corollary~\ref{prop:state} has been used in the state feedback case of the system-level parametrization~\cite{wang2019system}. The proof in~\cite{wang2019system} used a frequency-based method. Here, we provided an alternative proof from a state-space perspective, which is consistent with the proofs for Theorem~\ref{Theo:mainresult} and Corollary~\ref{prop:stable}.

\section{Parameterizations of stabilizing controllers} \label{section:paramterization}
The results in Theorem~\ref{Theo:mainresult} can be used to parameterize the set of internally stabilizing controllers $\mathcal{C}_{\text{stab}}$, leading to four equivalent parameterizations. One of them corresponds to the SLP~\cite{wang2019system}, and another one is the IOP~\cite{furieri2019input}. The remaining two parameterizations are new and, to the best of the authors' knowledge, have not been characterized before. The results in Corollaries~\ref{prop:stable} and~\ref{prop:state} can also be used to parameterize $\mathcal{C}_{\text{stab}}$ in a simplified way.

\subsection{Four equivalent parameterizations for general plants}
The closed-loop responses from $(\bm{\delta}_x,\bm{\delta}_y)$ to $(\mathbf{x},\mathbf{u})$ {have} been utilized in the SLP~\cite{wang2019system}. Specifically, consider
\begin{equation} \label{eq:slp1}
        \begin{bmatrix}
            \mathbf{x} \\
            \mathbf{u}
        \end{bmatrix} = \begin{bmatrix}
            \bm{\Phi}_{xx} & \bm{\Phi}_{xy} \\
            \bm{\Phi}_{ux} & \bm{\Phi}_{uy}
        \end{bmatrix}\begin{bmatrix}
            \bm{\delta}_x \\
            \bm{\delta}_y  \\
        \end{bmatrix}.
\end{equation}
We have the following system-level parameterization (SLP).
\begin{proposition}[SLP{~\cite[Theorem 2]{wang2019system}}]\label{prop:slp1}
     Consider the LTI system~\eqref{eq:LTI}, evolving under a dynamic control policy~\eqref{eq:state_space_controller}. The following {statements} are true:
    \begin{enumerate}
        \item For any $\mathbf{K} \in \mathcal{C}_{\text{{\emph{stab}}}}$, the resulting closed-loop responses~\eqref{eq:slp1} are in the following affine subspace

        \begin{equation}\label{eq:slp1_constraint}
    \begin{aligned}
    \begin{bmatrix} zI - A& - B \end{bmatrix}\begin{bmatrix}
            \bm{\Phi}_{xx} & \bm{\Phi}_{xy} \\
            \bm{\Phi}_{ux} & \bm{\Phi}_{uy}
        \end{bmatrix} &= \begin{bmatrix} I & 0 \end{bmatrix}, \\
            \begin{bmatrix}
            \bm{\Phi}_{xx} & \bm{\Phi}_{xy} \\
            \bm{\Phi}_{ux} & \bm{\Phi}_{uy}
        \end{bmatrix}\begin{bmatrix} zI - A\\ - C \end{bmatrix} &= \begin{bmatrix} I \\ 0 \end{bmatrix}, \\
           \bm{\Phi}_{xx}, \bm{\Phi}_{ux},
          \bm{\Phi}_{xy},\bm{\Phi}_{uy}
        &\in \mathcal{RH}_{\infty}.
        \end{aligned}
\end{equation}
        \item For any transfer matrices $ \bm{\Phi}_{xx}, \bm{\Phi}_{ux},
            \bm{\Phi}_{xy}, \bm{\Phi}_{uy}$ satisfying~\eqref{eq:slp1_constraint}, $\mathbf{K} = \bm{\Phi}_{uy} - \bm{\Phi}_{ux}\bm{\Phi}_{xx}^{-1}\bm{\Phi}_{xy} \in \mathcal{C}_{\text{{\emph{stab}}}}$.
    \end{enumerate}
\end{proposition}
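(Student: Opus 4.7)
The plan is to prove the two directions separately, using the explicit closed-loop formulas~\eqref{eq:responses} for the necessity direction, and exploiting the equivalence $1) \Leftrightarrow 3)$ from Theorem~\ref{Theo:mainresult} for the stability implication in both directions.

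For part 1, suppose $\mathbf{K} \in \mathcal{C}_{\text{stab}}$. I would verify the two block affine identities in~\eqref{eq:slp1_constraint} by substituting the explicit expressions from~\eqref{eq:responses}. For instance,
\begin{equation*}
(zI-A)\bm{\Phi}_{xx} - B\bm{\Phi}_{ux} = (zI-A-B\mathbf{K}C)\bm{\Phi}_{xx} = I,
\end{equation*}
and the remaining three scalar identities follow by the same one-line computation. Stability of the four blocks $\bm{\Phi}_{xx}, \bm{\Phi}_{xy}, \bm{\Phi}_{ux}, \bm{\Phi}_{uy}$ is exactly the implication $1)\Rightarrow 3)$ of Theorem~\ref{Theo:mainresult}.

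For part 2, given $(\bm{\Phi}_{xx}, \bm{\Phi}_{xy}, \bm{\Phi}_{ux}, \bm{\Phi}_{uy}) \in \mathcal{RH}_{\infty}$ that satisfy~\eqref{eq:slp1_constraint}, I would form $\mathbf{K} = \bm{\Phi}_{uy} - \bm{\Phi}_{ux}\bm{\Phi}_{xx}^{-1}\bm{\Phi}_{xy}$ and show that, when $\mathbf{K}$ is connected to $\mathbf{G}$, the resulting closed-loop responses match the given $\bm{\Phi}$'s. The key algebraic manipulation is to combine $(zI-A)\bm{\Phi}_{xx} = I + B\bm{\Phi}_{ux}$ with $\bm{\Phi}_{ux}(zI-A) = \bm{\Phi}_{uy}C$ and the analogous column identities, from which $\mathbf{K}C\bm{\Phi}_{xx} = \bm{\Phi}_{ux}$, $\bm{\Phi}_{xx}B\mathbf{K} = \bm{\Phi}_{xy}$, and $\bm{\Phi}_{xx} = (zI - A - B\mathbf{K}C)^{-1}$ drop out; these identities together with~\eqref{eq:responses} show that $\bm{\Phi}_{xx}, \bm{\Phi}_{xy}, \bm{\Phi}_{ux}, \bm{\Phi}_{uy}$ are the actual closed-loop responses of $\mathbf{K}$. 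Since they lie in $\mathcal{RH}_{\infty}$, the implication $3)\Rightarrow 1)$ of Theorem~\ref{Theo:mainresult} then delivers $\mathbf{K} \in \mathcal{C}_{\text{stab}}$.

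The main obstacle lies in part 2. First, I have to ensure that $\bm{\Phi}_{xx}^{-1}$ exists as a rational matrix and that the resulting $\mathbf{K}$ is proper: the constraint $(zI-A)\bm{\Phi}_{xx} - B\bm{\Phi}_{ux} = I$ forces the expansion $\bm{\Phi}_{xx} = z^{-1} I + O(z^{-2})$ at infinity, so $\bm{\Phi}_{xx}^{-1}$ is a well-defined (improper) rational matrix, and properness of $\mathbf{K}$ follows because $\bm{\Phi}_{ux}$ and $\bm{\Phi}_{xy}$ are strictly proper (a fact implied by the second affine identity evaluated at $z = \infty$). Second, the algebraic verification that the four realized closed-loop responses actually coincide with the given $\bm{\Phi}$'s must be carried out without secretly assuming that $\mathbf{K}$'s own realization is stabilizable/detectable; Assumption~\ref{assumption:stabilizability} on $(A,B,C)$, together with Lemma~\ref{lemma:3}, then allows the final invocation of Theorem~\ref{Theo:mainresult} to close the argument.
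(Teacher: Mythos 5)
Your proposal is correct and follows the same template the paper uses for this family of results: the paper does not spell out a proof of Proposition~\ref{prop:slp1} (it cites \cite{wang2019system} and notes it ``can be proved similarly'' to Propositions~\ref{prop:slp3} and~\ref{prop:slp4}), and those proofs are exactly your two-step argument --- verify the affine identities from~\eqref{eq:responses} for necessity, then show the candidate $\mathbf{K}$ reproduces the given closed-loop maps and invoke the relevant equivalence in Theorem~\ref{Theo:mainresult} for sufficiency. Your attention to invertibility of $\bm{\Phi}_{xx}$ and properness of $\mathbf{K}$ is a welcome addition (the paper handles the analogous point for $\bm{\Phi}_{yy}$ and $\bm{\Phi}_{uu}$ via strict properness of $\mathbf{G}$); the only nit is that strict properness of $\bm{\Phi}_{xy}$ comes from the first (row) identity $(zI-A)\bm{\Phi}_{xy}-B\bm{\Phi}_{uy}=0$ rather than the second.
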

We refer to $\mathbf{K} = \bm{\Phi}_{uy} - \bm{\Phi}_{ux}\bm{\Phi}_{xx}^{-1}\bm{\Phi}_{xy}$ as the four-block SLP controller. Also, the closed-loop responses from $(\bm{\delta}_y,\bm{\delta}_u)$ to $(\mathbf{y},\mathbf{u})$ {have} been used in the IOP~\cite{furieri2019input}. Specifically, consider
\begin{equation} \label{eq:slp2}
        \begin{bmatrix}
            \mathbf{y} \\
            \mathbf{u}
        \end{bmatrix} = \begin{bmatrix}
            \bm{\Phi}_{yy} & \bm{\Phi}_{yu} \\
            \bm{\Phi}_{uy} & \bm{\Phi}_{uu}
        \end{bmatrix}\begin{bmatrix}
            \bm{\delta}_y \\
            \bm{\delta}_u  \\
        \end{bmatrix}\,.
\end{equation}
We have the following input-output parameterization~(IOP).
\begin{proposition}[IOP{~\cite[Theorem 1]{furieri2019input}}] \label{prop:slp2}
    Consider the LTI system~\eqref{eq:LTI}, evolving under a dynamic control policy~\eqref{eq:state_space_controller}. The following {statements} are true:
    \begin{enumerate}
        \item For any $\mathbf{K} \in \mathcal{C}_{{\text{\emph{stab}}}}$, the resulting closed-loop responses~\eqref{eq:slp2} are in the following affine subspace
        \begin{equation}\label{eq:iop}
    \begin{aligned}
    \begin{bmatrix} I & -\mathbf{G} \end{bmatrix}\begin{bmatrix}
           \bm{\Phi}_{yy} & \bm{\Phi}_{yu} \\
            \bm{\Phi}_{uy} & \bm{\Phi}_{uu}
        \end{bmatrix} &= \begin{bmatrix} I & 0 \end{bmatrix}, \\
            \begin{bmatrix}
            \bm{\Phi}_{yy} & \bm{\Phi}_{yu} \\
            \bm{\Phi}_{uy} & \bm{\Phi}_{uu}
        \end{bmatrix}\begin{bmatrix}  -\mathbf{G} \\I \end{bmatrix} &= \begin{bmatrix} 0 \\ I\end{bmatrix}, \\
       \bm{\Phi}_{yy}, \bm{\Phi}_{uy},
            \bm{\Phi}_{yu}, \bm{\Phi}_{uu} &\in \mathcal{RH}_{\infty}.
        \end{aligned}
\end{equation}
        \item For any transfer matrices $ \bm{\Phi}_{yy}, \bm{\Phi}_{uy},
            \bm{\Phi}_{yu}, \bm{\Phi}_{uu}$ satisfying~\eqref{eq:iop}, $\mathbf{K} = \bm{\Phi}_{uy}\bm{\Phi}_{yy}^{-1} \in \mathcal{C}_{{\text{\emph{stab}}}}$.
    \end{enumerate}
\end{proposition}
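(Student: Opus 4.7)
The plan is to prove both parts by directly leveraging Theorem~\ref{Theo:mainresult}, in particular the equivalence between internal stability and stability of the closed-loop map from $(\bm{\delta}_y,\bm{\delta}_u)$ to $(\mathbf{y},\mathbf{u})$ (statement 4 of that theorem). The two directions of the proposition then correspond to pushing this equivalence through the additional affine algebraic identities that the four IOP blocks must satisfy.

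For the necessity direction, I would take $\mathbf{K}\in\mathcal{C}_{\text{stab}}$, invoke Theorem~\ref{Theo:mainresult} to conclude $\bm{\Phi}_{yy},\bm{\Phi}_{yu},\bm{\Phi}_{uy},\bm{\Phi}_{uu}\in\mathcal{RH}_\infty$, and then verify the two affine identities in~\eqref{eq:iop} by direct substitution from the explicit expressions in~\eqref{eq:responses}, using $\mathbf{G}=C(zI-A)^{-1}B$ and $\bm{\Phi}_{xx}=(zI-A-B\mathbf{K}C)^{-1}$. For instance, the identity $\bm{\Phi}_{yy}-\mathbf{G}\bm{\Phi}_{uy}=I$ reduces, after collecting terms, to checking $(zI-A-B\mathbf{K}C)\bm{\Phi}_{xx}B=B$, which is immediate from the definition of $\bm{\Phi}_{xx}$. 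The remaining three equations follow from essentially identical line-of-algebra manipulations.

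For the sufficiency direction, given $\bm{\Phi}_{yy},\bm{\Phi}_{yu},\bm{\Phi}_{uy},\bm{\Phi}_{uu}\in\mathcal{RH}_\infty$ satisfying~\eqref{eq:iop}, I would first use the strict properness of $\mathbf{G}$ together with $\bm{\Phi}_{yy}=I+\mathbf{G}\bm{\Phi}_{uy}$ to deduce $\bm{\Phi}_{yy}(\infty)=I$, so that $\bm{\Phi}_{yy}^{-1}$ exists as a proper rational matrix and $\mathbf{K}:=\bm{\Phi}_{uy}\bm{\Phi}_{yy}^{-1}$ is a well-defined proper controller. Substituting $\bm{\Phi}_{uy}=\mathbf{K}\bm{\Phi}_{yy}$ into the first affine equation yields $(I-\mathbf{G}\mathbf{K})\bm{\Phi}_{yy}=I$, so $\bm{\Phi}_{yy}=(I-\mathbf{G}\mathbf{K})^{-1}$, and the other three affine equations then force $\bm{\Phi}_{uy}=\mathbf{K}(I-\mathbf{G}\mathbf{K})^{-1}$, $\bm{\Phi}_{yu}=(I-\mathbf{G}\mathbf{K})^{-1}\mathbf{G}$, and $\bm{\Phi}_{uu}=I+\mathbf{K}(I-\mathbf{G}\mathbf{K})^{-1}\mathbf{G}$. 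These coincide with the actual $(\bm{\delta}_y,\bm{\delta}_u)$-to-$(\mathbf{y},\mathbf{u})$ transfer matrices induced by $\mathbf{K}$ in closed loop, so Theorem~\ref{Theo:mainresult} (applied in the reverse direction) delivers $\mathbf{K}\in\mathcal{C}_{\text{stab}}$.

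The main obstacle is the sufficiency step: one has to verify that the constructed $\mathbf{K}$ genuinely reproduces the postulated four blocks as actual closed-loop responses, which hinges on invertibility of $\bm{\Phi}_{yy}$ and on automatic well-posedness of the feedback interconnection. Strict properness of $\mathbf{G}$ underpins both issues, so it is crucial that this assumption was imposed at the outset; without it, one would also have to rule out algebraic loops by a separate argument.
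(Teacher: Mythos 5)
Your proposal is correct and follows essentially the same route the paper takes: the paper does not spell out a proof of Proposition~\ref{prop:slp2} but notes it can be proved exactly like Proposition~\ref{prop:slp3}, whose proof is precisely your strategy — necessity by direct substitution of the explicit closed-loop expressions, and sufficiency by using $\bm{\Phi}_{yy}=I+\mathbf{G}\bm{\Phi}_{uy}$ with strict properness of $\mathbf{G}$ to invert $\bm{\Phi}_{yy}$, showing the induced closed-loop maps coincide with the postulated blocks, and then invoking statement 4) of Theorem~\ref{Theo:mainresult}. No gaps.
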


Next, we consider the following closed-loop responses
\begin{equation}\label{eq:slp3}
        \begin{bmatrix}
            \mathbf{y} \\
            \mathbf{u} \\
        \end{bmatrix} =   \begin{bmatrix}
            \bm{\Phi}_{yx}  & \bm{\Phi}_{yy}\\
            \bm{\Phi}_{ux}  & \bm{\Phi}_{uy}\\
        \end{bmatrix} \begin{bmatrix}
            \bm{\delta}_x \\
            \bm{\delta}_y  \\
        \end{bmatrix}.
\end{equation}
We have 
a new parametrization of $\mathcal{C}_{\text{stab}}$. 
\begin{proposition}[Mixed I]\label{prop:slp3}
     Consider the LTI system~\eqref{eq:LTI}, evolving under a dynamic control policy~\eqref{eq:state_space_controller}. The following {statements} are true:
    \begin{enumerate}
        \item For any $\mathbf{K} \in \mathcal{C}_{{\text{\emph{stab}}}}$, the resulting closed-loop responses~\eqref{eq:slp3} are in the following affine subspace
        \begin{equation} \label{eq:slp3constraint}
             \begin{aligned}
           \begin{bmatrix} I & - \mathbf{G} \end{bmatrix} \begin{bmatrix}
            \bm{\Phi}_{yx}  & \bm{\Phi}_{yy}\\
            \bm{\Phi}_{ux}  & \bm{\Phi}_{uy}\\
        \end{bmatrix} &= {\begin{bmatrix} C(zI - A)^{-1}  &  I\end{bmatrix}},\\
        \begin{bmatrix}
            \bm{\Phi}_{yx}  & \bm{\Phi}_{yy}\\
            \bm{\Phi}_{ux}  & \bm{\Phi}_{uy}\\
        \end{bmatrix}  \begin{bmatrix} zI - A \\ -C \end{bmatrix} &= 0,
        \\
            \bm{\Phi}_{yx}, \bm{\Phi}_{ux},
            \bm{\Phi}_{yy}, \bm{\Phi}_{uy} &\in \mathcal{RH}_{\infty}.
            \end{aligned}
        \end{equation}
        \item For any transfer matrices $ \bm{\Phi}_{yx}, \bm{\Phi}_{ux},
            \bm{\Phi}_{yy}, \bm{\Phi}_{uy}$ satisfying~\eqref{eq:slp3constraint}, $\mathbf{K} = \bm{\Phi}_{uy}\bm{\Phi}_{yy}^{-1} \in \mathcal{C}_{{\text{\emph{stab}}}}$.
    \end{enumerate}
\end{proposition}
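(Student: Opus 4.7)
The plan is to prove both directions of Proposition~\ref{prop:slp3} by reduction to Theorem~\ref{Theo:mainresult} (equivalence between internal stability and condition 2)), in a manner that closely parallels the proof of the IOP in Proposition~\ref{prop:slp2}. The key is that the parametrization~\eqref{eq:slp3} corresponds exactly to the disturbance-to-signal map $(\bm{\delta}_x,\bm{\delta}_y)\to (\mathbf{y},\mathbf{u})$, whose stability is already known to characterize $\mathcal{C}_{\text{stab}}$.

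For necessity (part 1), I would substitute the explicit expressions for the four entries from~\eqref{eq:responses}, namely $\bm{\Phi}_{yx}=C\bm{\Phi}_{xx}$, $\bm{\Phi}_{yy}=C\bm{\Phi}_{xx}B\mathbf{K}+I$, $\bm{\Phi}_{ux}=\mathbf{K}C\bm{\Phi}_{xx}$, $\bm{\Phi}_{uy}=\mathbf{K}(C\bm{\Phi}_{xx}B\mathbf{K}+I)$, together with the identity $\bm{\Phi}_{xx}=(zI-A-B\mathbf{K}C)^{-1}$. A direct computation of $\bm{\Phi}_{yx}-\mathbf{G}\bm{\Phi}_{ux}$ and $\bm{\Phi}_{yy}-\mathbf{G}\bm{\Phi}_{uy}$ collapses to $C(zI-A)^{-1}$ and $I$ respectively after factoring out $(zI-A-B\mathbf{K}C)\bm{\Phi}_{xx}=I$, yielding the left equality in~\eqref{eq:slp3constraint}. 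The right equality follows similarly by expanding $\bm{\Phi}_{yx}(zI-A)-\bm{\Phi}_{yy}C$ and $\bm{\Phi}_{ux}(zI-A)-\bm{\Phi}_{uy}C$. The stability requirement is immediate from the implication $1)\Rightarrow 2)$ of Theorem~\ref{Theo:mainresult}.

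For sufficiency (part 2), given any four $\mathcal{RH}_\infty$ transfer matrices meeting~\eqref{eq:slp3constraint}, I first note that $\bm{\Phi}_{yy}=I+\mathbf{G}\bm{\Phi}_{uy}$ is well defined and satisfies $\bm{\Phi}_{yy}(\infty)=I$ since $\mathbf{G}$ is strictly proper, so $\bm{\Phi}_{yy}^{-1}$ exists as a proper rational matrix. Define $\mathbf{K}=\bm{\Phi}_{uy}\bm{\Phi}_{yy}^{-1}$. I would then show that the closed-loop maps actually induced by $(\mathbf{G},\mathbf{K})$ coincide with the given $\bm{\Phi}$'s. From $\bm{\Phi}_{yy}-\mathbf{G}\bm{\Phi}_{uy}=I$ one obtains $(I-\mathbf{G}\mathbf{K})\bm{\Phi}_{yy}=I$, hence the induced $(I-\mathbf{G}\mathbf{K})^{-1}$ equals $\bm{\Phi}_{yy}$, which immediately gives the $(\bm{\delta}_y\to\mathbf{y})$ and $(\bm{\delta}_y\to\mathbf{u})$ matches. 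For the $\bm{\delta}_x$-column, I use the right constraint to rewrite $\bm{\Phi}_{yx}=\bm{\Phi}_{yy}C(zI-A)^{-1}$ and $\bm{\Phi}_{ux}=\bm{\Phi}_{uy}C(zI-A)^{-1}$, and then apply the push-through identity $(I-XY)^{-1}X=X(I-YX)^{-1}$ with $X=C(zI-A)^{-1}$, $Y=B\mathbf{K}$, to deduce $C(zI-A-B\mathbf{K}C)^{-1}=(I-\mathbf{G}\mathbf{K})^{-1}C(zI-A)^{-1}=\bm{\Phi}_{yx}$, and analogously $\mathbf{K}C(zI-A-B\mathbf{K}C)^{-1}=\bm{\Phi}_{ux}$. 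Since these four induced transfer matrices equal the given stable $\bm{\Phi}$'s, Theorem~\ref{Theo:mainresult} (equivalence $1)\Leftrightarrow 2)$) yields $\mathbf{K}\in\mathcal{C}_{\text{stab}}$.

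The main obstacle I anticipate is the sufficiency argument for the $\bm{\delta}_x$-column: one must carefully deploy both the left affine constraint (to land on the $C(zI-A)^{-1}$ factor) and the right affine constraint (to identify $\bm{\Phi}_{yx},\bm{\Phi}_{ux}$ with the induced maps), combined with a push-through manipulation. The role of Assumption~\ref{assumption:stabilizability}, together with the strict properness of $\mathbf{G}$, in guaranteeing that $\bm{\Phi}_{yy}$ is proper-invertible and that $\mathbf{K}=\bm{\Phi}_{uy}\bm{\Phi}_{yy}^{-1}$ is a proper transfer matrix, must also be checked carefully so that the reduction to Theorem~\ref{Theo:mainresult} is valid.
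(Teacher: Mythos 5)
Your proposal is correct and follows essentially the same route as the paper: verify the affine constraints by direct substitution of the closed-loop maps for necessity, and for sufficiency show that the closed-loop responses induced by $\mathbf{K}=\bm{\Phi}_{uy}\bm{\Phi}_{yy}^{-1}$ coincide with the given stable parameters (using $\bm{\Phi}_{yy}-\mathbf{G}\bm{\Phi}_{uy}=I$ and $\bm{\Phi}_{yx}=\bm{\Phi}_{yy}C(zI-A)^{-1}$, $\bm{\Phi}_{ux}=\bm{\Phi}_{uy}C(zI-A)^{-1}$ from the right constraint), then invoke the equivalence $1)\Leftrightarrow 2)$ of Theorem~\ref{Theo:mainresult}. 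The paper's manipulation $(I-\mathbf{G}\bm{\Phi}_{uy}\bm{\Phi}_{yy}^{-1})^{-1}=\bm{\Phi}_{yy}(\bm{\Phi}_{yy}-\mathbf{G}\bm{\Phi}_{uy})^{-1}$ is exactly your push-through step in a different guise, so there is no substantive difference.
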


The proof is provided in Appendix~\ref{App:proposition3}.
Finally, we consider {the} case
\begin{equation}\label{eq:slp4}
        \begin{bmatrix}
            \mathbf{x} \\
            \mathbf{u} \\
        \end{bmatrix} =   \begin{bmatrix}
            \bm{\Phi}_{xy}  & \bm{\Phi}_{xu}\\
            \bm{\Phi}_{uy}  & \bm{\Phi}_{uu}\\
        \end{bmatrix} \begin{bmatrix}
            \bm{\delta}_y \\
            \bm{\delta}_u  \\
        \end{bmatrix}.
\end{equation}
The following result {mirrors} Proposition~\ref{prop:slp3} {for an additional new parametrization of $\mathcal{C}_{\text{stab}}$}. 
\begin{proposition}[Mixed II] \label{prop:slp4}
     Consider the LTI system~\eqref{eq:LTI}, evolving under a dynamic control policy~\eqref{eq:state_space_controller}. The following {statements} are true:
    \begin{enumerate}
        \item For any $\mathbf{K} \in \mathcal{C}_{{\text{\emph{stab}}}}$, the resulting closed-loop responses~\eqref{eq:slp4} are in the following affine subspace
        \begin{equation} \label{eq:slp4constraint}
             \begin{aligned}
           \begin{bmatrix} zI - A & - B \end{bmatrix} \begin{bmatrix}
            \bm{\Phi}_{xy}  & \bm{\Phi}_{xu}\\
            \bm{\Phi}_{uy}  & \bm{\Phi}_{uu}\\
        \end{bmatrix}  &= 0,\\
       \begin{bmatrix}
            \bm{\Phi}_{xy}  & \bm{\Phi}_{xu}\\
            \bm{\Phi}_{uy}  & \bm{\Phi}_{uu}\\
        \end{bmatrix} \begin{bmatrix} -\mathbf{G}\\ I \end{bmatrix} &= \begin{bmatrix}
            (zI - A)^{-1}B  \\
            I
        \end{bmatrix},
        \\
            \bm{\Phi}_{xy}, \bm{\Phi}_{uy},
            \bm{\Phi}_{xu}, \bm{\Phi}_{uu} &\in \mathcal{RH}_{\infty}.
            \end{aligned}
        \end{equation}
        \item For any transfer matrices $ \bm{\Phi}_{xy}, \bm{\Phi}_{uy},
            \bm{\Phi}_{xu}, \bm{\Phi}_{uu}$ satisfying~\eqref{eq:slp4constraint}, $\mathbf{K} = \bm{\Phi}_{uu}^{-1}\bm{\Phi}_{uy} \in \mathcal{C}_{{\text{\emph{stab}}}}$.
    \end{enumerate}
\end{proposition}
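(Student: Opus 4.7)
My plan is to mirror the two-step template of Proposition~\ref{prop:slp3}, exploiting the duality between the $(\bm{\delta}_x,\bm{\delta}_y)\to(\mathbf{y},\mathbf{u})$ map treated there and the present $(\bm{\delta}_y,\bm{\delta}_u)\to(\mathbf{x},\mathbf{u})$ map; the final step will invoke statement~5 of Theorem~\ref{Theo:mainresult} to conclude internal stability.

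For Statement~1, I would take $\mathbf{K}\in\mathcal{C}_{\text{stab}}$ and read off the four relevant closed-loop responses directly from~\eqref{eq:responses}; all are stable by Theorem~\ref{Theo:mainresult}. The ``left'' equality in~\eqref{eq:slp4constraint} can be verified either algebraically via the resolvent identity $(zI-A)\bm{\Phi}_{xx}=I+B\mathbf{K}C\bm{\Phi}_{xx}$, or more transparently by noting that with $\bm{\delta}_x=0$ the plant satisfies $(zI-A)\mathbf{x}=B\mathbf{u}$; substituting $\mathbf{x}=\bm{\Phi}_{xy}\bm{\delta}_y+\bm{\Phi}_{xu}\bm{\delta}_u$ and $\mathbf{u}=\bm{\Phi}_{uy}\bm{\delta}_y+\bm{\Phi}_{uu}\bm{\delta}_u$ and matching coefficients of the independent disturbances delivers the two block-column identities at once. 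For the ``right'' equality, I would feed the closed loop with the specific pair $\bm{\delta}_y=-\mathbf{G}\mathbf{v}$, $\bm{\delta}_u=\mathbf{v}$ for arbitrary $\mathbf{v}$; using $\mathbf{u}=\mathbf{K}\mathbf{y}+\bm{\delta}_u$ and $\mathbf{y}=\mathbf{G}\mathbf{u}+\bm{\delta}_y$ one obtains $\mathbf{u}=\mathbf{v}$, hence $\mathbf{x}=(zI-A)^{-1}B\mathbf{v}$, from which the two right-hand identities follow immediately by matching.

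For Statement~2, given stable transfer matrices satisfying~\eqref{eq:slp4constraint}, the relation $\bm{\Phi}_{uu}=I+\bm{\Phi}_{uy}\mathbf{G}$ shows that $\bm{\Phi}_{uu}$ has identity direct term (because $\mathbf{G}$ is strictly proper) and is therefore invertible, so $\mathbf{K}:=\bm{\Phi}_{uu}^{-1}\bm{\Phi}_{uy}$ is well defined. The pivotal identity is
\[
I-\mathbf{K}\mathbf{G}=\bm{\Phi}_{uu}^{-1}(\bm{\Phi}_{uu}-\bm{\Phi}_{uy}\mathbf{G})=\bm{\Phi}_{uu}^{-1},
\]
whence $(I-\mathbf{K}\mathbf{G})^{-1}=\bm{\Phi}_{uu}$. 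I would then compute the actual closed-loop maps induced by this $\mathbf{K}$: the $\mathbf{u}$-row simplifies to $\bm{\Phi}_{uy}\bm{\delta}_y+\bm{\Phi}_{uu}\bm{\delta}_u$, while $\mathbf{x}=(zI-A)^{-1}B\mathbf{u}$ coincides with $\bm{\Phi}_{xy}\bm{\delta}_y+\bm{\Phi}_{xu}\bm{\delta}_u$ by the first constraint in~\eqref{eq:slp4constraint}. Thus the $(\bm{\delta}_y,\bm{\delta}_u)\to(\mathbf{x},\mathbf{u})$ map equals the prescribed stable block, and Theorem~\ref{Theo:mainresult} yields $\mathbf{K}\in\mathcal{C}_{\text{stab}}$.

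The main obstacle is bookkeeping rather than concept: since Theorem~\ref{Theo:mainresult} already certifies that external stability of this particular four-block map is equivalent to internal stability, the only nontrivial verification is that the synthesized $\mathbf{K}$ reproduces exactly the prescribed closed-loop entries, which is precisely the role of $(I-\mathbf{K}\mathbf{G})^{-1}=\bm{\Phi}_{uu}$. The trickiest algebra would be the right-hand affine constraint in Statement~1, which the signal-testing trick above circumvents.
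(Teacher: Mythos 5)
Your proof is correct and follows essentially the same route as the paper's (Appendix B): read off the closed-loop responses, verify the two affine identities, and for the converse use invertibility of $\bm{\Phi}_{uu}=I+\bm{\Phi}_{uy}\mathbf{G}$ to show the induced $(\bm{\delta}_y,\bm{\delta}_u)\to(\mathbf{x},\mathbf{u})$ map reproduces the prescribed stable block before invoking statement~5 of Theorem~\ref{Theo:mainresult}. Your signal-testing verification of the right-hand constraint and your pivot on $(I-\mathbf{K}\mathbf{G})^{-1}=\bm{\Phi}_{uu}$ are just tidier packagings of the resolvent manipulations the paper carries out directly.
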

The proof of Proposition~\ref{prop:slp4} is similar to that of Proposition~\ref{prop:slp3}, 
which is provided in Appendix~\ref{app:proof_p4}  for completeness.

To summarize, Propositions~\ref{prop:slp1}--~\ref{prop:slp4} {establish four} equivalent methods to parameterize the set of internally stabilizing controllers using closed-loop responses:
 \begin{equation*}
        \begin{aligned}
            \mathcal{C}_{\text{stab}}  &= \{\mathbf{K} = \bm{\Phi}_{uy} - \bm{\Phi}_{ux}\bm{\Phi}_{xx}^{-1}\bm{\Phi}_{xy}  \mid  \bm{\Phi}_{xx},  \bm{\Phi}_{ux},
              \bm{\Phi}_{xy},  \bm{\Phi}_{uy}\, \\
        & \qquad \qquad \qquad \qquad \quad       \text{are in the affine subspace~\eqref{eq:slp1_constraint}}   \}, \\
            \mathcal{C}_{\text{stab}}  &= \{\mathbf{K} = \bm{\Phi}_{uy}\bm{\Phi}_{yy}^{-1}  \mid \bm{\Phi}_{yy}, \bm{\Phi}_{uy}, \bm{\Phi}_{yu},  \bm{\Phi}_{uu} \;  \\
          & \qquad \qquad \qquad \qquad \quad      \text{are in the affine subspace~\eqref{eq:iop}}   \}, \\
            \mathcal{C}_{\text{stab}}  &= \{\mathbf{K} = \bm{\Phi}_{uy}\bm{\Phi}_{yy}^{-1}  \mid\bm{\Phi}_{yx}, \bm{\Phi}_{ux},
            \bm{\Phi}_{yy} , \bm{\Phi}_{uy} \;  \\
          & \qquad \qquad \qquad \qquad \quad      \text{are in the affine subspace~\eqref{eq:slp3constraint}}   \}, \\
            \mathcal{C}_{\text{stab}}  &= \{\mathbf{K} = \bm{\Phi}_{uu}^{-1}\bm{\Phi}_{uy}  \mid \bm{\Phi}_{xy}, \bm{\Phi}_{uy},
            \bm{\Phi}_{xu} , \bm{\Phi}_{uu} \; \\
         & \qquad \qquad \qquad \qquad \quad       \text{are in the affine subspace~\eqref{eq:slp4constraint}}   \}.
        \end{aligned}
    \end{equation*}
Unlike the state-space characterization~\eqref{eq:state_space_representation}, the constraints~\eqref{eq:slp1_constraint},~\eqref{eq:iop},~\eqref{eq:slp3constraint}, and~\eqref{eq:slp4constraint} are all affine in the new parameters. Based on~\eqref{eq:slp1_constraint},~\eqref{eq:iop},~\eqref{eq:slp3constraint}, and~\eqref{eq:slp4constraint}, convex optimization problems can be derived for the classical optimal controller synthesis; see~\cite{wang2019system,furieri2019input, zheng2019equivalence} for details. We will present a case study in Section~\ref{section:example}.

\begin{remark}[Equivalence with Youla]
    The explicit~equ-ivalence between Propositions~\ref{prop:slp1} \&~\ref{prop:slp2} and the Youla parameterization has been derived in~{\cite{zheng2019equivalence}}. It is not difficult to derive {the} explicit relationship {between} Propositions~\ref{prop:slp3} \& \ref{prop:slp4} and the Youla parameterization~\eqref{eq:youla} using the approach of~{\cite{furieri2019input,zheng2019equivalence}}. While there are four parameters in~\eqref{eq:slp1_constraint},~\eqref{eq:iop},~\eqref{eq:slp3constraint}, or~\eqref{eq:slp4constraint}, there is only one freedom due to the affine constraints. This is consistent with the Youla parameterization, where only one parameter is involved with no explicit affine constraints. In Proposition~\ref{prop:clpyoula}, we will show that any doubly-coprime factorization of the plant can exactly eliminate the affine constraints~\eqref{eq:slp1_constraint},~\eqref{eq:iop},~\eqref{eq:slp3constraint}, and~\eqref{eq:slp4constraint}. 
\end{remark}

\begin{remark}[Numerical computation]
   \label{remark:numerical}
We note that while being convex, the decision variables in~\eqref{eq:slp1_constraint},~\eqref{eq:iop},~\eqref{eq:slp3constraint}, and~\eqref{eq:slp4constraint} are infinite-dimensional. Thus, finite-dimensional approximations are usually needed for numerical computations, which will be discussed in Section~\ref{section:FIR}. However, the affine constraints~\eqref{eq:slp1_constraint},~\eqref{eq:iop},~\eqref{eq:slp3constraint}, and~\eqref{eq:slp4constraint} can never be exactly satisfied in numerical computation. Section~\ref{section:robustness} will formally discuss the issue of numerical robustness.

\end{remark}
\subsection{Two special cases: stable plants and state feedback}
The results in Corollaries~\ref{prop:stable} and~\ref{prop:state} can be exploited to derive simplified versions of Propositions~\ref{prop:slp1}--\ref{prop:slp4}. 
We will later show that these simplified parametrizations enjoy provable numerical robustness. When the plant is open-loop stable, the IOP (Proposition~\ref{prop:slp2}) and the Mixed I (Proposition~\ref{prop:slp3}) are simplified as follows.

\begin{corollary}\label{coro:stablepara}
    Consider the LTI system~\eqref{eq:LTI}, evolving under a dynamic controller policy~\eqref{eq:state_space_controller}. If the LTI system is open-loop stable, then we have
    \begin{equation*}
        \mathcal{C}_{\text{{\emph{stab}}}} \! =\! \left\{\mathbf{K}= \bm{\Phi}_{uy}\bm{\Phi}_{yy}^{-1} \left| \begin{bmatrix} I & - \mathbf{G} \end{bmatrix}\begin{bmatrix} \bm{\Phi}_{yy} \\ \bm{\Phi}_{uy} \end{bmatrix}  = I, \right.\! \bm{\Phi}_{uy} \!\in\! \mathcal{RH}_{\infty}\right\}.
    \end{equation*}
\end{corollary}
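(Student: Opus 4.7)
The plan is to prove the two inclusions separately, leveraging Corollary~\ref{prop:stable}, which reduces internal stability to stability of the single block $\bm{\Phi}_{uy}$ whenever $A$ is stable, and Proposition~\ref{prop:slp2}, which we can specialize to the open-loop stable setting.

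For the forward inclusion $\mathcal{C}_{\text{stab}} \subseteq \{\cdots\}$, I would start from any $\mathbf{K} \in \mathcal{C}_{\text{stab}}$ and invoke Proposition~\ref{prop:slp2} to conclude that $\bm{\Phi}_{yy}=(I-\mathbf{G}\mathbf{K})^{-1}$ and $\bm{\Phi}_{uy}=\mathbf{K}(I-\mathbf{G}\mathbf{K})^{-1}$ lie in $\mathcal{RH}_\infty$ and satisfy the first row of the IOP affine subspace~\eqref{eq:iop}, i.e., $\bm{\Phi}_{yy}-\mathbf{G}\bm{\Phi}_{uy}=I$. The recovery formula $\mathbf{K}=\bm{\Phi}_{uy}\bm{\Phi}_{yy}^{-1}$ is identical to the one in Proposition~\ref{prop:slp2}, so $\mathbf{K}$ belongs to the right-hand set.

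For the reverse inclusion, I would take any $\bm{\Phi}_{uy}\in\mathcal{RH}_\infty$ together with $\bm{\Phi}_{yy}$ satisfying $\bm{\Phi}_{yy}=I+\mathbf{G}\bm{\Phi}_{uy}$. The first key observation is that open-loop stability gives $\mathbf{G}\in\mathcal{RH}_\infty$, so $\bm{\Phi}_{yy}\in\mathcal{RH}_\infty$ automatically; moreover $\mathbf{G}$ strictly proper yields $\bm{\Phi}_{yy}(\infty)=I$, which guarantees invertibility of $\bm{\Phi}_{yy}$ in $\mathcal{RH}_\infty$. I then set $\mathbf{K}:=\bm{\Phi}_{uy}\bm{\Phi}_{yy}^{-1}$ and compute the actual closed-loop transfer matrix under this $\mathbf{K}$, namely
\begin{equation*}
\mathbf{K}(I-\mathbf{G}\mathbf{K})^{-1} = \bm{\Phi}_{uy}\bm{\Phi}_{yy}^{-1}\bigl((\bm{\Phi}_{yy}-\mathbf{G}\bm{\Phi}_{uy})\bm{\Phi}_{yy}^{-1}\bigr)^{-1} = \bm{\Phi}_{uy},
\end{equation*}
where the last equality uses the affine constraint $\bm{\Phi}_{yy}-\mathbf{G}\bm{\Phi}_{uy}=I$. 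Since the right-hand side lies in $\mathcal{RH}_\infty$ and $\mathbf{G}$ is open-loop stable, Corollary~\ref{prop:stable} delivers $\mathbf{K}\in\mathcal{C}_{\text{stab}}$.

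The step that requires the most care (and is the only genuine subtlety) is the invertibility argument for $\bm{\Phi}_{yy}$ together with the well-posedness of $\mathbf{K}=\bm{\Phi}_{uy}\bm{\Phi}_{yy}^{-1}$ as a proper transfer matrix; strict properness of $\mathbf{G}$ is essential here. Everything else is a direct algebraic reduction of the full IOP (Proposition~\ref{prop:slp2}) aided by Corollary~\ref{prop:stable}, which removes the need to keep track of the second row of~\eqref{eq:iop} and of the blocks $\bm{\Phi}_{yu},\bm{\Phi}_{uu}$.
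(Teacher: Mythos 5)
Your proposal is correct and follows essentially the route the paper intends: the paper omits an explicit proof, stating only that it is "similar to that of Proposition~\ref{prop:slp3}", and your argument mirrors that template exactly — achievability via the full IOP in one direction, and in the other direction invertibility of $\bm{\Phi}_{yy}=I+\mathbf{G}\bm{\Phi}_{uy}$ from strict properness of $\mathbf{G}$, the algebraic identity $\mathbf{K}(I-\mathbf{G}\mathbf{K})^{-1}=\bm{\Phi}_{uy}$, and Corollary~\ref{prop:stable} to reduce internal stability to stability of the single block $\bm{\Phi}_{uy}$. No gaps.
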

 This result is consistent with the classical one in~\cite[Theorem 12.7]{zhou1996robust}. Note that for open-loop stable plants, the transfer matrix $\bm{\Phi}_{uy}$ from the measurement disturbance $\bm{\delta}_y$ to the control input $\mathbf{u}$ is the same as the Youla parameter $\mathbf{Q}$. Under the condition in Corollary~\ref{coro:stablepara}, the Mixed II (Proposition~\ref{prop:slp4}) can be simplified as well:
 \begin{equation*}
        \mathcal{C}_{\text{{stab}}}\! = \!\left\{ \! \mathbf{K}= \bm{\Phi}_{uu}^{-1}\bm{\Phi}_{uy} \left| \begin{bmatrix} \bm{\Phi}_{uy} & \bm{\Phi}_{uu} \end{bmatrix}\!\!\begin{bmatrix} - \mathbf{G} \\ I \end{bmatrix}  = I, \right. \! \! \bm{\Phi}_{uy} \!\in \!\mathcal{RH}_{\infty}\! \right\}.
    \end{equation*}

If the state is directly measurable for control, \emph{i.e.}, $C=I$, Corollary~\ref{prop:state} leads to the following simplified version of SLP.
\begin{corollary}[{\cite[Theorem 1]{wang2019system}}]\label{coro:statepara}
    Consider the LTI system~\eqref{eq:LTI}, evolving under a dynamic controller policy~\eqref{eq:state_space_controller}. If $C = I$, then we have
    \begin{equation*}
    \begin{aligned}
        \mathcal{C}_{\text{\emph{stab}}} = \bigg\{\mathbf{K}= \mathbf{\Phi}_{ux}\mathbf{\Phi}_{xx}^{-1} &\left| (zI -A)\mathbf{\Phi}_{xx} - B\mathbf{\Phi}_{ux} = I, \right.
        \\ &\qquad \qquad \mathbf{\Phi}_{ux}, \mathbf{\Phi}_{xx} \in \frac{1}{z}\mathcal{RH}_{\infty}\bigg\}.
    \end{aligned}
    \end{equation*}
\end{corollary}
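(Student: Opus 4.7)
The plan is to prove the two inclusions separately, leveraging the state-feedback characterization of internal stability established in Corollary~\ref{prop:state}, which reduces internal stability to stability of $\begin{bmatrix}\bm{\Phi}_{xx}\\ \bm{\Phi}_{ux}\end{bmatrix}$ alone. This parallels how Proposition~\ref{prop:slp1} was derived from Theorem~\ref{Theo:mainresult}: translate the characterization of internal stability into an affine subspace of closed-loop maps, then show the controller $\mathbf{K}=\bm{\Phi}_{ux}\bm{\Phi}_{xx}^{-1}$ recovers exactly those closed-loop maps.

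For the ``$\subseteq$'' direction, I would take any $\mathbf{K}\in\mathcal{C}_{\text{stab}}$ and compute the closed-loop maps from~\eqref{eq:responses} with $C=I$, namely $\bm{\Phi}_{xx}=(zI-A-B\mathbf{K})^{-1}$ and $\bm{\Phi}_{ux}=\mathbf{K}(zI-A-B\mathbf{K})^{-1}$. Multiplying out, $(zI-A)\bm{\Phi}_{xx}-B\bm{\Phi}_{ux}=(zI-A-B\mathbf{K})\bm{\Phi}_{xx}=I$, which gives the affine equation. Stability follows from Corollary~\ref{prop:state}. To verify strict properness, I would use the state-space realization in~\eqref{eq:CLresponses}: with $C=I$, the $(1,1)$ block $\bm{\Phi}_{xx}$ and the $(3,1)$ block $\bm{\Phi}_{ux}$ both inherit zero direct feedthrough, so they sit in $\tfrac{1}{z}\mathcal{RH}_\infty$.

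For the ``$\supseteq$'' direction, given $\bm{\Phi}_{xx},\bm{\Phi}_{ux}\in\tfrac{1}{z}\mathcal{RH}_\infty$ satisfying $(zI-A)\bm{\Phi}_{xx}-B\bm{\Phi}_{ux}=I$, I would define $\mathbf{K}=\bm{\Phi}_{ux}\bm{\Phi}_{xx}^{-1}$ and show two things. First, that $\mathbf{K}$ is well-defined as a proper transfer matrix: evaluating the affine identity as $z\to\infty$, the strict properness of $\bm{\Phi}_{ux}$ forces $z\bm{\Phi}_{xx}(z)\to I$, so $\bm{\Phi}_{xx}^{-1}$ exists and behaves like $z\,I$ at infinity, making $\mathbf{K}=\bm{\Phi}_{ux}\bm{\Phi}_{xx}^{-1}$ proper. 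Second, that $\mathbf{K}$ reproduces the given responses: rewriting the affine equation as $(zI-A-B\bm{\Phi}_{ux}\bm{\Phi}_{xx}^{-1})\bm{\Phi}_{xx}=I$ shows $(zI-A-B\mathbf{K})^{-1}=\bm{\Phi}_{xx}$, hence the closed-loop $\bm{\Phi}_{xx}^{\text{cl}}$ and $\bm{\Phi}_{ux}^{\text{cl}}=\mathbf{K}\bm{\Phi}_{xx}=\bm{\Phi}_{ux}$ coincide with the prescribed maps. Stability of $\begin{bmatrix}\bm{\Phi}_{xx}\\ \bm{\Phi}_{ux}\end{bmatrix}$ then implies $\mathbf{K}\in\mathcal{C}_{\text{stab}}$ by Corollary~\ref{prop:state}.

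The main obstacle is the invertibility and properness argument in the sufficiency step. Unlike Proposition~\ref{prop:slp2}, where invertibility of $\bm{\Phi}_{yy}$ followed transparently from $\mathbf{G}$ being strictly proper, here the transfer matrix $\bm{\Phi}_{xx}$ is itself strictly proper, so $\bm{\Phi}_{xx}^{-1}$ has a pole at infinity; the key is that this pole is exactly compensated by the strict properness of $\bm{\Phi}_{ux}$, which must be tracked carefully through the leading-order expansion of the affine constraint. Everything else is either a direct computation or a direct application of Corollary~\ref{prop:state}.
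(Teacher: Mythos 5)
Your proof is correct and follows essentially the same route the paper intends: the paper states that Corollary~\ref{coro:statepara} is proved ``similarly to Proposition~\ref{prop:slp3}'', i.e.\ by verifying the affine constraint for any $\mathbf{K}\in\mathcal{C}_{\text{stab}}$ and, conversely, showing that $\mathbf{K}=\bm{\Phi}_{ux}\bm{\Phi}_{xx}^{-1}$ reproduces the prescribed responses and is stabilizing via the state-feedback stability characterization (Corollary~\ref{prop:state}). Your leading-order argument that $z\bm{\Phi}_{xx}(z)\to I$ (so that $\mathbf{K}$ is proper despite $\bm{\Phi}_{xx}$ being strictly proper) is a detail the paper leaves implicit, and it is handled correctly.
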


{Note that the simplified IOP/Mixed I/Mixed II requires the stability of only one parameter, while the simplified SLP requires the stability of two parameters.} The proofs for Corollaries~\ref{coro:stablepara} and~\ref{coro:statepara} are similar to that of Proposition~\ref{prop:slp3}.

\section{Numerical computation and controller implementation}
\label{section:FIR}
This section investigates the numerical computation and controller implementation using the closed-loop parameterization for $\mathcal{C}_{\text{stab}}$. As noted in Remark~\ref{remark:numerical}, since the decision variables in the affine constraints~\eqref{eq:slp1_constraint},~\eqref{eq:iop},~\eqref{eq:slp3constraint}, and~\eqref{eq:slp4constraint} are infinite-dimensional, it is not immediately obvious to derive efficient numerical computation to search over the feasible region. One practical method is to apply the finite impulse response (FIR) approximation, which is extensively used in~\cite{wang2019system,furieri2019input}. As we will see, the SLP, the IOP and the two new mixed parametrizations are not equivalent to each other after imposing FIR constraints. In this section, we also present standard state-space realizations~\eqref{eq:state_space_controller} for the controllers using closed-loop responses.

\subsection{Numerical computation via FIR}

We denote the space of finite impulse response (FIR) transfer matrices with horizon $T$ as
$$
\mathcal{F}_T : = \left\{\mathbf{H} \in \mathcal{RH}_{\infty} \left|~~ \mathbf{H} = \sum_{k=0}^T \frac{1}{z^k}H_k\right.\right\},
$$
where $H_k$ denotes the $i$-th spectral component of the FIR transfer matrix $\mathbf{H}$. It is known that on letting the FIR length $T$ go to infinity, $\mathcal{F}_T$ converges to the space $\mathcal{RH}_{\infty}$~\cite[Theorem 4.7]{pohl2009advanced}.
It is not difficult to check that after imposing the decision variables to be FIR transfer matrices of horizon $T$, the constraints~\eqref{eq:slp1_constraint},~\eqref{eq:iop},~\eqref{eq:slp3constraint}, and~\eqref{eq:slp4constraint} all become finite-dimensional affine constraints in terms of the spectral components of the closed-loop responses. Specifically, the constraints are obtained by matching the coefficients associated with the terms $z^{-k}$. Thus, searching {for} an internally stabilizing controller only requires solving a linear program (LP) under the FIR assumption\footnote{Depending on the choice of the cost function, optimal controller synthesis may be cast as a quadratic program (QP) under the FIR assumption; see a case study in Section~\ref{section:example}.}.

Here, we show that imposing the FIR assumption has different effects depending on the chosen closed-loop parametrization.
\begin{theorem} \label{theo:FIR}
Given the LTI system~\eqref{eq:LTI}, evolving under a dynamic control policy~\eqref{eq:state_space_controller}, we consider the statements:
 \begin{enumerate}[label=(\roman*)]
   \setlength\itemsep{1mm}
\item $\bm{\Phi} \in \mathcal{F}_T$;
        \item $\left(\begin{bmatrix}
            \bm{\delta}_x \\
            \bm{\delta}_y  \\
        \end{bmatrix} \rightarrow \begin{bmatrix}
            \mathbf{x} \\
            \mathbf{u} \\
        \end{bmatrix}\right) \in \mathcal{F}_T; \hfill \text{(SLP)} \qquad $

        \item $\left(\begin{bmatrix}
            \bm{\delta}_x \\
            \bm{\delta}_y  \\
        \end{bmatrix} \rightarrow \begin{bmatrix}
            \mathbf{y} \\
            \mathbf{u} \\
        \end{bmatrix}\right) \in \mathcal{F}_T; \hfill \text{(Mixed I)} \qquad $

        \item $\left(\begin{bmatrix}
            \bm{\delta}_y \\
            \bm{\delta}_u  \\
        \end{bmatrix} \rightarrow \begin{bmatrix}
            \mathbf{x} \\
            \mathbf{u} \\
        \end{bmatrix}\right) \in \mathcal{F}_T;  \hfill \text{(Mixed II)} \qquad$

        \item $\left(\begin{bmatrix}
            \bm{\delta}_y \\
            \bm{\delta}_u  \\
        \end{bmatrix} \rightarrow \begin{bmatrix}
            \mathbf{y} \\
            \mathbf{u} \\
        \end{bmatrix}\right) \in \mathcal{F}_T.  \hfill \text{(IOP) \qquad}$

   \end{enumerate}
  If $(A, B, C)$ and $(A_k, B_k, C_k)$ are both stabilizable and detectable, we have  $(i) \Leftrightarrow (ii)\Rightarrow (iii)\Rightarrow (v)$
      and $(i) \Leftrightarrow (ii) \Rightarrow (iv)\Rightarrow (v)$. In addition, if $(A, B, C)$ and $(A_k, B_k, C_k)$ are both controllable and observable, we have $(i) \Leftrightarrow (ii) \Leftrightarrow (iii) \Leftrightarrow (iv)\Leftrightarrow (v)$.
\end{theorem}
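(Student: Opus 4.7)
The plan is to chase identities among the nine entries of $\bm\Phi$ in~\eqref{eq:responses}, together with the affine constraints~\eqref{eq:slp1_constraint}--\eqref{eq:slp4constraint}, exploiting that $\mathcal F_T$ is closed under left/right multiplication by constant matrices and under adding a constant. The implications $(i)\Rightarrow(ii),(iii),(iv),(v)$ are immediate, since each of the four $2\times 2$ blocks is literally a sub-block of $\bm\Phi$. The work is to climb from a single $2\times 2$ block back up to the full $\bm\Phi$, or across to a neighbouring block.

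\textbf{Lifting SLP.} From~\eqref{eq:responses} I would read off the five identities
\begin{equation*}
\bm\Phi_{xu}=\bm\Phi_{xx}B,\;\bm\Phi_{yx}=C\bm\Phi_{xx},\;\bm\Phi_{yu}=C\bm\Phi_{xu},\;\bm\Phi_{yy}=I+C\bm\Phi_{xy},\;\bm\Phi_{uu}=I+\bm\Phi_{ux}B,
\end{equation*}
so that every missing entry of $\bm\Phi$ is a constant-matrix sandwich (plus at most a constant) of the four SLP entries $\bm\Phi_{xx},\bm\Phi_{xy},\bm\Phi_{ux},\bm\Phi_{uy}$. Hence SLP $\in\mathcal F_T$ forces $\bm\Phi\in\mathcal F_T$, which by restriction yields Mixed I, Mixed II, and IOP in $\mathcal F_T$, proving $(ii)\Rightarrow(i),(iii),(iv),(v)$ in one shot.

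\textbf{Mixed parametrizations to IOP.} For $(iv)\Rightarrow(v)$, the same identities $\bm\Phi_{yy}=I+C\bm\Phi_{xy}$ and $\bm\Phi_{yu}=C\bm\Phi_{xu}$ immediately upgrade Mixed II entries into IOP entries. For $(iii)\Rightarrow(v)$ the missing IOP blocks $\bm\Phi_{yu},\bm\Phi_{uu}$ are not constant-matrix functions of Mixed I, so I would invoke the lower row of the affine constraint~\eqref{eq:slp3constraint}, namely $\bm\Phi_{yx}(zI-A)=\bm\Phi_{yy}C$ and $\bm\Phi_{ux}(zI-A)=\bm\Phi_{uy}C$, and right-multiply by $(zI-A)^{-1}B$ to obtain the key identities $\bm\Phi_{yy}\mathbf G=\bm\Phi_{yx}B$ and $\bm\Phi_{uy}\mathbf G=\bm\Phi_{ux}B$. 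Combined with the universal relations $\bm\Phi_{yu}=\bm\Phi_{yy}\mathbf G$ and $\bm\Phi_{uu}=I+\bm\Phi_{uy}\mathbf G$ (either derived directly from the loop equations $\mathbf y=\mathbf G\mathbf u+\bm\delta_y$, $\mathbf u=\mathbf K\mathbf y+\bm\delta_u$, or equivalently read off from the IOP constraints~\eqref{eq:iop}), this gives $\bm\Phi_{yu}=\bm\Phi_{yx}B$ and $\bm\Phi_{uu}=I+\bm\Phi_{ux}B$, both manifestly FIR.

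\textbf{The controllable-observable case.} The outstanding implication under minimality is $(v)\Rightarrow(i)$. My plan is to use the shared realization of the IOP block from~\eqref{eq:clyu2yu} and~\eqref{eq:clmatrices}, namely $\bm\Phi_{\text{IOP}}=\hat C_1(zI-A_{cl})^{-1}\hat B_2+D$, and to prove that this realization is minimal. A PBH calculation on $[A_{cl}-\lambda I\mid\hat B_2]$ can be reduced by block-column operations: subtracting $D_k$ times the $B$-column from the $BD_k$-column cancels one coupling; subtracting $D_kC$ times the $B$-column from the first block column removes $BD_kC$ in the $(1,1)$ entry; and subtracting $C_k$ times the $B$-column from the $BC_k$-column removes $BC_k$ in the $(1,2)$ entry. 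The test reduces to
\begin{equation*}
\begin{bmatrix} A-\lambda I & 0 & 0 & B \\ B_kC & A_k-\lambda I & B_k & 0 \end{bmatrix},
\end{equation*}
whose full row rank is equivalent to simultaneous PBH of $(A,B)$ and $(A_k,B_k)$. A dual row-reduction argument handles observability of $(\hat C_1,A_{cl})$ from observability of $(A,C)$ and $(A_k,C_k)$. With minimality in hand, an FIR transfer matrix forces the $A$-matrix of its minimal realization to be nilpotent, so $A_{cl}$ is nilpotent, $(zI-A_{cl})^{-1}$ is a polynomial in $z^{-1}$, and through~\eqref{eq:CLresponses} every entry of $\bm\Phi$ is FIR. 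The main technical obstacle I anticipate is the block-column bookkeeping that decouples the plant and controller PBH conditions; everything else is a direct consequence of minimality plus the shared realization.
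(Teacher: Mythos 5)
Your proof is correct and follows essentially the same route as the paper: the easy directions by restriction, the lift $(ii)\Rightarrow(i)$ via the constant-matrix identities in~\eqref{eq:responses}, and $(v)\Rightarrow(i)$ via minimality of the realization $\hat{C}_1(zI-A_{\text{cl}})^{-1}\hat{B}_2+D$ (your PBH column reduction is just a cosmetic variant of the paper's state-feedback-invariance argument in Appendix~\ref{app:controllability}) so that FIR forces $A_{\text{cl}}$ nilpotent. The only superfluous step is routing $(iii)\Rightarrow(v)$ through~\eqref{eq:slp3constraint} and $\mathbf{G}$: the identities $\bm{\Phi}_{yu}=\bm{\Phi}_{yx}B$ and $\bm{\Phi}_{uu}=\bm{\Phi}_{ux}B+I$ already follow directly from~\eqref{eq:responses}, which is what the paper's ``similar arguments'' refers to.
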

The proof is not mathematically involved, and we provide it in Appendix~\ref{App:theorem2}. Note that minimal state-space realizations of the plant and the controller deserve more investigations in distributed control when particular structures are required; see~\cite{rantzer2019realizability,vamsi2015optimal} for details.

Upon defining the following sets
         \begin{equation*}
        \begin{aligned}
            \mathcal{C}_{\text{SLP}} \! &= \!\{\mathbf{K} = \bm{\Phi}_{uy} - \bm{\Phi}_{ux}\bm{\Phi}_{xx}^{-1}\bm{\Phi}_{xy}  \mid  \bm{\Phi}_{xx},  \bm{\Phi}_{ux},
              \bm{\Phi}_{xy},  \bm{\Phi}_{uy} \in \mathcal{F}_T\, \\
              & \qquad \qquad \qquad \qquad \qquad \text{are in the affine subspace~\eqref{eq:slp1_constraint}}   \},\\
            \mathcal{C}_{\text{M1}}  \!&= \!\{\mathbf{K} = \bm{\Phi}_{uy}\bm{\Phi}_{yy}^{-1}  \mid\bm{\Phi}_{yx}, \bm{\Phi}_{yy},
            \bm{\Phi}_{ux} , \bm{\Phi}_{uy} \in \mathcal{F}_T \; \\
              & \qquad \qquad \qquad \qquad \qquad
            \text{are in the affine subspace~\eqref{eq:slp3constraint}}   \},\\
            \mathcal{C}_{{\text{M2}}} \! &= \!\{\mathbf{K} = \bm{\Phi}_{uu}^{-1}\bm{\Phi}_{uy}  \mid \bm{\Phi}_{xy}, \bm{\Phi}_{uy},
            \bm{\Phi}_{xu} , \bm{\Phi}_{uu} \in \mathcal{F}_T \;  \\
              & \qquad \qquad \qquad \qquad \qquad
            \text{are in the affine subspace~\eqref{eq:slp4constraint}}   \},\\
            \mathcal{C}_{\text{IOP}} \! &= \!\{\mathbf{K} = \bm{\Phi}_{uy}\bm{\Phi}_{yy}^{-1}  \mid \bm{\Phi}_{yy}, \bm{\Phi}_{uy}, \bm{\Phi}_{yu},  \bm{\Phi}_{uu} \in \mathcal{F}_T \;  \\
              & \qquad \qquad \qquad \qquad \qquad
            \text{are in the affine subspace~\eqref{eq:iop}}   \},
        \end{aligned}
    \end{equation*}
it is easy to derive the following corollary.

\begin{corollary}
\label{co:inclusions}
If $(A, B, C)$ and $(A_k, B_k, C_k)$ are both stabilizable and detectable, 
we have
$
   {\mathcal{C}_{\text{SLP}}} \subset {\mathcal{C}_{\text{M1}}} \subset {\mathcal{C}_{\text{IOP}}} \subset \mathcal{C}_{\text{stab}}$ and $
    {\mathcal{C}_{\text{SLP}}} \subset {\mathcal{C}_{\text{M2}}} \subset {\mathcal{C}_{\text{IOP}}} \subset \mathcal{C}_{\text{stab}}$. If $(A, B, C)$ and $(A_k, B_k, C_k)$ are both controllable and observable, we have $
    {\mathcal{C}_{\text{SLP}}} = {\mathcal{C}_{\text{M1}}} =  {\mathcal{C}_{\text{M2}}} = {\mathcal{C}_{\text{IOP}}} \subset \mathcal{C}_{\text{stab}}$.
\end{corollary}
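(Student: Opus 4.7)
The plan is to read off all the inclusions directly from Theorem~\ref{theo:FIR} together with Propositions~\ref{prop:slp1}--\ref{prop:slp4}. The containment $\mathcal{C}_X\subseteq \mathcal{C}_{\text{stab}}$ for each $X\in\{\text{SLP, M1, M2, IOP}\}$ is immediate, since restricting the relevant parameters from $\mathcal{RH}_\infty$ to $\mathcal{F}_T$ only shrinks the feasible set, and any controller produced by an element of the FIR-restricted affine subspace lies in $\mathcal{C}_{\text{stab}}$ by the corresponding proposition. So the real content is in the chain inclusions among $\mathcal{C}_{\text{SLP}}$, $\mathcal{C}_{\text{M1}}$, $\mathcal{C}_{\text{M2}}$, $\mathcal{C}_{\text{IOP}}$.

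For a chain inclusion such as $\mathcal{C}_{\text{SLP}}\subseteq \mathcal{C}_{\text{M1}}$, I would take $\mathbf{K}\in\mathcal{C}_{\text{SLP}}$. By definition there exist FIR maps $\bm{\Phi}_{xx},\bm{\Phi}_{ux},\bm{\Phi}_{xy},\bm{\Phi}_{uy}\in\mathcal{F}_T$ satisfying~\eqref{eq:slp1_constraint} with $\mathbf{K}$ recovered from Proposition~\ref{prop:slp1}. Because the affine subspace in~\eqref{eq:slp1_constraint} is in bijection with the actual closed-loop maps of $\mathbf{K}$, these FIR parameters \emph{are} the true closed-loop responses from $(\bm{\delta}_x,\bm{\delta}_y)$ to $(\mathbf{x},\mathbf{u})$, so statement $(ii)$ of Theorem~\ref{theo:FIR} holds. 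Invoking the implication $(ii)\Rightarrow(iii)$ of Theorem~\ref{theo:FIR} (valid under stabilizability and detectability) then yields that the Mixed I responses are also in $\mathcal{F}_T$. These responses automatically satisfy~\eqref{eq:slp3constraint} by Proposition~\ref{prop:slp3} (because $\mathbf{K}\in\mathcal{C}_{\text{stab}}$), so they are an admissible FIR certificate for $\mathbf{K}\in\mathcal{C}_{\text{M1}}$. The three remaining inclusions $\mathcal{C}_{\text{SLP}}\subseteq \mathcal{C}_{\text{M2}}$, $\mathcal{C}_{\text{M1}}\subseteq\mathcal{C}_{\text{IOP}}$, $\mathcal{C}_{\text{M2}}\subseteq\mathcal{C}_{\text{IOP}}$ follow by exactly the same template, using the implications $(ii)\Rightarrow(iv)$, $(iii)\Rightarrow(v)$, $(iv)\Rightarrow(v)$ of Theorem~\ref{theo:FIR} together with Propositions~\ref{prop:slp4} and~\ref{prop:slp2}, respectively.

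For the equality statement under controllability and observability, I would reverse the argument using the implication $(v)\Rightarrow(i)$ of Theorem~\ref{theo:FIR}. Given $\mathbf{K}\in\mathcal{C}_{\text{IOP}}$, the FIR IOP responses force $\bm{\Phi}\in\mathcal{F}_T$, so every group of four responses, including the SLP, Mixed I, and Mixed II groups, lies in $\mathcal{F}_T$; combined with the affine constraints of Propositions~\ref{prop:slp1},~\ref{prop:slp3},~\ref{prop:slp4}, this gives $\mathbf{K}\in\mathcal{C}_{\text{SLP}}\cap\mathcal{C}_{\text{M1}}\cap\mathcal{C}_{\text{M2}}$ and collapses all inclusions to equalities.

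The only subtle point is the tacit identification between the FIR parameters in each characterization and the genuine closed-loop responses of the associated $\mathbf{K}$; this is what lets me feed them as the hypothesis into Theorem~\ref{theo:FIR}. Since each of Propositions~\ref{prop:slp1}--\ref{prop:slp4} shows that the affine subspace is in one-to-one correspondence with the closed-loop maps induced by controllers in $\mathcal{C}_{\text{stab}}$, this identification is automatic, so no additional argument beyond invoking the propositions and Theorem~\ref{theo:FIR} is required. I do not anticipate a genuine technical obstacle: all the heavy lifting has already been done in establishing Theorem~\ref{theo:FIR} and the parameterization propositions, and the corollary is an essentially bookkeeping consequence of chaining them together.
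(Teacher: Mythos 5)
Your proposal is correct and follows exactly the route the paper intends: the paper offers no separate proof of Corollary~\ref{co:inclusions} beyond noting that it follows from Theorem~\ref{theo:FIR}, and your argument is precisely the fleshed-out version of that derivation (the identification of the FIR parameters with the true closed-loop responses via Propositions~\ref{prop:slp1}--\ref{prop:slp4}, the chain implications $(ii)\Rightarrow(iii)\Rightarrow(v)$ and $(ii)\Rightarrow(iv)\Rightarrow(v)$ for the inclusions, and $(v)\Rightarrow(i)$ for the equalities under controllability and observability). No gaps.
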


Theoretically, the closed-loop parameterizations in Propositions~\ref{prop:slp1}-\ref{prop:slp4} are equivalent to each other. However, after imposing the FIR approximation on the decision variables, {Corollary~\ref{co:inclusions}} shows that the IOP~\cite{furieri2019input} in Proposition~\ref{prop:slp2} has the best ability to approximate the set of stabilizing controllers $\mathcal{C}_{\text{stab}}$, as it exclusively deals with the maps from inputs to outputs without passing through the system state; see Figure~\ref{fig:Venn_diagram} for illustration. Precisely, when there are some stable uncontrollable and/or unobservable modes in~\eqref{eq:LTI}, these modes cannot be changed by any feedback controller and will be reflected in the closed-loop responses involving the state $\mathbf{x}$. Therefore, for systems with stable uncontrollable and/or unobservable 
modes, the parameters in the SLP~\cite{wang2019system}, or the new parameterization in Proposition~\ref{prop:slp3}/\ref{prop:slp4} (Mixed I/II), cannot be made FIR by definition, since these parameterizations involve the state $\mathbf{x}$ and/or the disturbance on the state $\bm{\delta}_x$ explicitly.
For example, consider an LTI system~\eqref{eq:LTI} with matrices as
$$
    A = \begin{bmatrix}
        0.5 & 0 \\
        0 & 1
    \end{bmatrix}, \; B = \begin{bmatrix} 0 \\ 1 \end{bmatrix}, \; C = \begin{bmatrix} 0 & 1 \end{bmatrix}.
$$
There is one uncontrollable and unobservable mode $z = 0.5$, and this mode is stable. The affine constraints~\eqref{eq:slp1_constraint}, ~\eqref{eq:slp3constraint},~\eqref{eq:slp4constraint} are all infeasible for any FIR approximation with finite horizon $T$ since the mode $z = 0.5$ cannot be represented by FIR exactly, while the IOP in Proposition~\ref{prop:slp2} is feasible as long as the horizon $T \geq 1$.

\begin{remark}
    Note that if there are some stable uncontrollable and/or unobservable modes in~\eqref{eq:LTI}, one may perform a model reduction to get an equivalent state-space realization that is controllable and observable. Then, all the closed-loop parameterizations in Propositions~\ref{prop:slp1}-\ref{prop:slp4} have the same ability for approximating $\mathcal{C}_{\text{stab}}$ when imposing the FIR assumption. We note that model reduction generally destroys the underlying sparsity structure in the original system~\eqref{eq:LTI}, which may be unfavourable for distributed controller synthesis~\cite{wang2018separable}. 
\end{remark}

\begin{figure}
    \centering
    \setlength{\abovecaptionskip}{0pt}
    \setlength{\belowcaptionskip}{0pt}
    \begin{tikzpicture}[set/.style={fill=cyan,fill opacity=0.1}]
\draw[set,
      yshift=0cm,
    rotate =90] (0,0) ellipse (1.2cm and 0.6cm);

\draw[set,
     xshift=0.5cm,
     yshift=0.7cm,
    rotate =70] (0,0) ellipse (2cm and 1cm);

\draw[set,
     xshift=-0.5cm,
     yshift=0.7cm,
    rotate =-70] (0,0) ellipse (2cm and 1cm);

\draw[set,
    yshift=1.2cm,
    rotate =90] (0,0) ellipse (2.4cm and 1.8cm);

\draw[set,
    yshift=1.6cm,
    rotate =90] (0,0) ellipse (2.8cm and 2.2cm);
 \node at (0,0.3) {\small ${\mathcal{C}_{\text{SLP}}}$};
 \node at (1,1.7) {\small ${\mathcal{C}_{\text{M1}}}$};
 \node at (-1,1.7) {\small ${\mathcal{C}_{\text{M2}}}$};
 \node at (0,3) {\small ${\mathcal{C}_{\text{IOP}}}$};
 \node at (0,4) {\small ${\mathcal{C}_{\text{stab}}}$};
\end{tikzpicture}






 \vspace{-2mm}
    \caption{The IOP provides the best inner approximation of $\mathcal{C}_{\text{stab}}$ using FIR approximations: $
   {\mathcal{C}_{\text{SLP}}} \subset {\mathcal{C}_{\text{M1}}} \subset {\mathcal{C}_{\text{IOP}}} \subset \mathcal{C}_{\text{stab}}$, and $
    {\mathcal{C}_{\text{SLP}}} \subset {\mathcal{C}_{\text{M2}}} \subset {\mathcal{C}_{\text{IOP}}} \subset \mathcal{C}_{\text{stab}}$.}
    \label{fig:Venn_diagram}
\end{figure}

\subsection{Controller implementation via state-space realization}
\label{section:statespace}
In Propositions~\ref{prop:slp1}--\ref{prop:slp4}, to get the controller $\mathbf{K}$, we need to compute the inverse of some transfer matrix as well as the product of transfer matrices.
For the SLP in~\cite{wang2019system}, the authors proposed the following implementation of the controller $\mathbf{K} = \bm{\Phi}_{uy} - \bm{\Phi}_{ux}\bm{\Phi}_{xx}^{-1}\bm{\Phi}_{xy}$ from the system responses matrices
$ \bm{\Phi}_{xx}, \bm{\Phi}_{xy},            \bm{\Phi}_{ux}, \bm{\Phi}_{uy} $:
\begin{equation} \label{eq:SLPcontrollerimplementation}
    \begin{aligned}
        z\bm{\beta} &= z(I - z \bm{\Phi}_{xx})\bm{\beta} - z\bm{\Phi}_{xy} \mathbf{y},\\
        \mathbf{u} &= z\bm{\Phi}_{ux}\beta + \bm{\Phi}_{uy}\mathbf{y}.
    \end{aligned}
\end{equation}
The implementation~\eqref{eq:SLPcontrollerimplementation} avoids the explicit computation of matrix inverse and matrix product. We note that a few other realizations have been discussed in~\cite{rantzer2019realizability,jensen2021explicit}. However, the controller matrices in~\eqref{eq:SLPcontrollerimplementation} still contain transfer matrices.
Motivated by~\cite{anderson2017structured}, this subsection provides a standard state-space realization~\eqref{eq:state_space_controller} for the controller in closed-loop parameterizations after imposing the FIR approximation.

We consider the controller $\mathbf{K} = \bm{\Phi}_{uy}\bm{\Phi}_{yy}^{-1}$ in Proposition~\ref{prop:slp2} and~\ref{prop:slp3} (IOP and Mixed I). 
We assume that the system response $\bm{\Phi}_{uy}$ and $\bm{\Phi}_{yy}$ are FIR with horizon $T$, \emph{i.e.},
\begin{equation} \label{eq:FIRiop}
    \bm{\Phi}_{uy} = \sum_{t=0}^T U_t \frac{1}{z^t} \in \mathcal{RH}_{\infty}, \;   \bm{\Phi}_{yy} = \sum_{t=0}^T Y_t \frac{1}{z^t} \in \mathcal{RH}_{\infty}.
\end{equation}
Upon defining the following real matrices
\begin{equation}\label{eq:ZpIp}
\begin{aligned}
    \hat{U} &= \begin{bmatrix} U_1 & U_2 & \ldots & U_T\end{bmatrix} \in  \mathbb{R}^{m \times pT}, \; \\
                        \hat{Y} &= \begin{bmatrix} Y_1 & Y_2 & \ldots & Y_T\end{bmatrix} \in  \mathbb{R}^{p \times pT},
\end{aligned}
\end{equation}
and $Z_p \in \mathbb{R}^{pT \times pT}$ as the down shift operator with sub-diagonal containing identity matrices of dimension $p \times p$ 
and $
    \mathcal{I}_p = [ I_p, 0 , \ldots,  0 ]^\tr \in \mathbb{R}^{pT \times p},
$
we have the following result.
\begin{theorem} \label{Theo:IOPstatespace}
Suppose that 
$\bm{\Phi}_{uy}$ and $\bm{\Phi}_{yy}$ are FIR transfer matrices with horizon $T$ in~\eqref{eq:FIRiop}. A state-space realization for the output feedback controller $\mathbf{K} = \bm{\Phi}_{uy}\bm{\Phi}_{yy}^{-1}$  is given by
\vspace{-2mm}
\begin{equation} \label{eq:IOPstatespace}
 \mathbf{K}= \left[\begin{array}{c|c} Z_p - \mathcal{I}_p\hat{Y} & -\mathcal{I}_p \\\hline
    U_0\hat{Y}- \hat{U} & U_0\end{array}\right].
    \vspace{-2mm}
\end{equation}
\end{theorem}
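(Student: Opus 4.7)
The plan is to realize $\mathbf{K}$ by first inverting the FIR $\bm{\Phi}_{yy}$ via an auxiliary signal, and then expressing everything in the time domain with the delayed versions of that auxiliary signal as the controller state. The only piece of ``structural'' information I need about the parameters is that $Y_{0}=I_{p}$, which follows from either of the affine constraints in~\eqref{eq:iop} or~\eqref{eq:slp3constraint}: the identity $\bm{\Phi}_{yy}-\mathbf{G}\bm{\Phi}_{uy}=I$ evaluated as $z\to\infty$ yields $Y_{0}=I_{p}$, since $\mathbf{G}$ is strictly proper. Hence $\bm{\Phi}_{yy}=I+\sum_{t=1}^{T}Y_{t}z^{-t}$ is always invertible, and $\bm{\Phi}_{yy}^{-1}$ is well-defined.

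Next, I would introduce the auxiliary signal $\mathbf{w}:=\bm{\Phi}_{yy}^{-1}\mathbf{y}$, so that $\mathbf{y}=\bm{\Phi}_{yy}\mathbf{w}$ and $\mathbf{u}=\bm{\Phi}_{uy}\mathbf{w}$. Exploiting $Y_{0}=I_{p}$ and the FIR structure, these relations read in the time domain
\begin{equation*}
y[k]=w[k]+\sum_{t=1}^{T}Y_{t}\,w[k-t],\qquad
u[k]=U_{0}w[k]+\sum_{t=1}^{T}U_{t}\,w[k-t].
\end{equation*}
I would then choose the controller state to encode the sign-flipped past values of $\mathbf{w}$, i.e.\
$\xi[k]:=-\bigl[w[k-1]^{\tr},\dots,w[k-T]^{\tr}\bigr]^{\tr}\in\mathbb{R}^{pT}$,
so that $\hat{Y}\xi[k]=-\sum_{t=1}^{T}Y_{t}w[k-t]$ and $\hat{U}\xi[k]=-\sum_{t=1}^{T}U_{t}w[k-t]$. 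This choice immediately gives $w[k]=y[k]+\hat{Y}\xi[k]$ and $u[k]=U_{0}\bigl(y[k]+\hat{Y}\xi[k]\bigr)-\hat{U}\xi[k]=(U_{0}\hat{Y}-\hat{U})\xi[k]+U_{0}y[k]$, which matches the claimed $C_{k}=U_{0}\hat{Y}-\hat{U}$ and $D_{k}=U_{0}$.

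For the state update, the sign convention combined with the shift structure of $Z_{p}$ and $\mathcal{I}_{p}$ yields
$\xi[k+1]=Z_{p}\xi[k]-\mathcal{I}_{p}w[k]=Z_{p}\xi[k]-\mathcal{I}_{p}\bigl(y[k]+\hat{Y}\xi[k]\bigr)=(Z_{p}-\mathcal{I}_{p}\hat{Y})\xi[k]-\mathcal{I}_{p}y[k]$,
recovering $A_{k}=Z_{p}-\mathcal{I}_{p}\hat{Y}$ and $B_{k}=-\mathcal{I}_{p}$ and completing the derivation of~\eqref{eq:IOPstatespace}. The main bookkeeping obstacle is the sign convention on $\xi$: without the negation, one obtains $B_{k}=+\mathcal{I}_{p}$, which differs from the stated formula by a sign absorbed into $C_{k}$; picking the sign of $\xi$ so that both $B_{k}=-\mathcal{I}_{p}$ and $C_{k}=U_{0}\hat{Y}-\hat{U}$ appear simultaneously is the one non-mechanical step. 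Everything else is a routine translation between the frequency-domain FIR coefficients and the shift-register state-space form.
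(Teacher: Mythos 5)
Your derivation is correct, and it reaches the stated realization by a route that differs from the paper's. The paper works entirely with state-space algebra in the frequency domain: it writes the shift-register realizations of $\bm{\Phi}_{uy}$ and $\bm{\Phi}_{yy}$ (using $Y_0=I_p$, exactly as you do), applies the standard inversion formula to $\bm{\Phi}_{yy}$, forms the cascade realization of the product $\bm{\Phi}_{uy}\bm{\Phi}_{yy}^{-1}$ of dimension $2pT$, and then uses the similarity transformation $T=\begin{bmatrix} I & I\\ 0 & I\end{bmatrix}$ to expose and delete a block of uncontrollable/unobservable modes, landing on~\eqref{eq:IOPstatespace}. You instead introduce the auxiliary signal $\mathbf{w}=\bm{\Phi}_{yy}^{-1}\mathbf{y}$, write $\mathbf{y}=\bm{\Phi}_{yy}\mathbf{w}$ and $\mathbf{u}=\bm{\Phi}_{uy}\mathbf{w}$ in the time domain (well-posed precisely because $Y_0=I_p$ makes $w[k]$ computable from $y[k]$ and past values of $w$), and take the sign-flipped delay line of $\mathbf{w}$ as the controller state; all four matrices $(A_k,B_k,C_k,D_k)$ then drop out by inspection. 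Your argument is more elementary and avoids both the $2pT$-dimensional intermediate realization and the mode-cancellation step, producing the $pT$-dimensional realization directly; the paper's approach is more mechanical and generalizes with less thought to the four-block SLP controller of Theorem~\ref{Theo:SLPstatespace}, where products and differences of several FIR blocks must be composed. Your identification of the sign convention on $\xi$ as the only non-mechanical choice is accurate — it is exactly what the paper's similarity transformation accomplishes.
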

A state-space realization for the controller $\mathbf{K} = \bm{\Phi}_{uu}^{-1}\bm{\Phi}_{uy}$ in Proposition~\ref{prop:slp4} (Mixed II) can be developed similarly. The proof of Theorem~\ref{Theo:IOPstatespace} is motivated by~\cite{anderson2017structured}, and is based on some standard operations on dynamical systems (see, e.g.,~\cite[Chapter 3.6]{zhou1996robust}). We provide the proofs in Appendix~\ref{app:statespace} for completeness. In Appendix~\ref{app:statespace}, we also provide a state-space realization for the SLP controller $\mathbf{K} =  \bm{\Phi}_{uy} - \bm{\Phi}_{ux}\bm{\Phi}_{xx}^{-1}\bm{\Phi}_{xy}$. Finally, we note that the state-space realization in~\eqref{eq:IOPstatespace} is in general not minimal.

\begin{table*}[t]
  \centering
    \renewcommand\arraystretch{1.2}
  \caption{Comparison of numerical robustness among different closed-loop parameterizations}
  {\small
  \begin{tabular}{c c c l c c c}
  \toprule
     & \makecell{Coprime \\ factorization } & \makecell{Equality \\ constraints } & \makecell{Controller \\ recovery $\mathbf{K}$} & \makecell{Open-loop \\ stable plants} & \makecell{Open-loop \\ unstable plants} & \makecell{Pre-stabilizing \\ the plant$^{2}$}  \\
    \cline{2-7}
    \multirow{3}{*}{SLP~\cite{wang2019system}} & \multirow{3}{*}{No} & \multirow{3}{*}{Yes} & ${\bm{\Phi}}_{uy} - {\bm{\Phi}}_{ux}{\bm{\Phi}}_{xx}^{-1}{\bm{\Phi}}_{xy} $ & $*$ & $*$ & $*$  \\
         & &  & ${\bm{\Phi}}_{uy}(I + C{\bm{\Phi}}_{xy})^{-1}$ & \checkmark & \xmark & \checkmark  \\
       & & & ${\bm{\Phi}}_{ux}{\bm{\Phi}}_{xx}^{-1}$ (when $C = I$)$^{1}$ & \checkmark & \checkmark & \checkmark  \\
       \hline
    IOP~\cite{furieri2019input} & No & Yes & \multicolumn{1}{c}{$\mathbf{\Phi}_{uy}\mathbf{\Phi}_{uy}^{-1} $} & \checkmark &  \xmark & \checkmark\\
    Mixed I & No &  Yes & \multicolumn{1}{c}{$\mathbf{\Phi}_{uy}\mathbf{\Phi}_{uy}^{-1} $} & \checkmark & \xmark & \checkmark\\
    Mixed II &No & Yes &  \multicolumn{1}{c}{$\mathbf{\Phi}_{uu}^{-1}\mathbf{\Phi}_{uy} $} & \checkmark & \xmark & \checkmark\\
    Youla~\cite{youla1976modern} &Yes&  No & \multicolumn{1}{c}{\eqref{eq:youla}} &  \checkmark & \checkmark & \checkmark \\
    \bottomrule
  \end{tabular}
  }
  \raggedright

{\scriptsize
{\begin{spacing}{1}
$^{1}$\;: This only works for the state feedback case, \emph{i.e.}, $C = I$.\newline
$^{2}$\;: This applies an initial stabilizing controller that is stable itself (see Proposition~\ref{prop:initialcontroller}). \newline
$*$\;: The situation requires care-by-case analysis; see Theorem~\ref{theo:SLProbustness} and Section~\ref{subsection:numerical} for details. \newline
\checkmark: The parameterization is numerically robust (see Corollary~\ref{coro:robustness}).\newline
\xmark\;: The parameterization cannot guarantee the closed-loop stability if small numerical mismatches in the equality constraints exist.\newline
\end{spacing}
}}
\label{tab:comparision}
\vspace{-3mm}
\end{table*}

\section{Numerical robustness of closed-loop parameterizations} \label{section:robustness}

The previous sections highlighted the benefits of closed-loop parameterizations: the set of internally stabilizing controllers can be fully characterized by a set of affine constraints on certain closed-loop responses, leading to finite-dimensional convex optimization problems for controller synthesis after imposing the FIR constraints. However, numerical solutions computed via arbitrarily precise floating point arithmetic can never solve the affine constraints exactly. This phenomenon is further exacerbated by  the finite stopping criteria used in common solvers, like SeDuMi~\cite{sturm1999using} and Mosek~\cite{andersen2000mosek}. Moreover, any controller implementation that uses floating-point arithmetic also introduce errors. Therefore, numerical mismatches in solving the affine constraints in Propositions~\ref{prop:slp1}--\ref{prop:slp4} always exist.

This section investigates how the numerical mismatches in the affine constraints affect the stability of the closed-loop system, a property termed as \emph{numerical robustness} of closed-loop parameterizations. 
An overview of the results in this section is presented in Table~\ref{tab:comparision}.

\subsection{Robustness results for the IOP and the SLP}
We begin with the IOP in Proposition~\ref{prop:slp2}. The transfer matrices $ \hat{\bm{\Phi}}_{yy}, \hat{\bm{\Phi}}_{uy},
            \hat{\bm{\Phi}}_{yu}, \hat{\bm{\Phi}}_{uu}$ only approximately satisfy the affine constraint~\eqref{eq:iop}, \emph{i.e.}, we have
 \begin{equation}\label{eq:iop_error}
    \begin{aligned}
    \begin{bmatrix} I & -\mathbf{G} \end{bmatrix}\begin{bmatrix}
           \hat{\bm{\Phi}}_{yy} & \hat{\bm{\Phi}}_{yu} \\
            \hat{\bm{\Phi}}_{uy} & \hat{\bm{\Phi}}_{uu}
        \end{bmatrix} &= \begin{bmatrix} I+\bm{\Delta}_1 & \bm{\Delta}_2 \end{bmatrix}, \\
            \begin{bmatrix}
            \hat{\bm{\Phi}}_{yy} & \hat{\bm{\Phi}}_{yu} \\
            \hat{\bm{\Phi}}_{uy} & \hat{\bm{\Phi}}_{uu}
        \end{bmatrix}\begin{bmatrix}  -\mathbf{G} \\I \end{bmatrix} &= \begin{bmatrix} \bm{\Delta}_3 \\ I+\bm{\Delta}_4\end{bmatrix}, \\
       \hat{\bm{\Phi}}_{yy}, \hat{\bm{\Phi}}_{uy},
            \hat{\bm{\Phi}}_{yu}, \hat{\bm{\Phi}}_{uu} &\in \mathcal{RH}_{\infty},
        \end{aligned}
\end{equation}
where the {residuals} are $\bm{\Delta}_1 = \hat{\bm{\Phi}}_{yy} - \mathbf{G}\hat{\bm{\Phi}}_{uy} - I,  \bm{\Delta}_2 = \hat{\bm{\Phi}}_{yu} - \mathbf{G}\hat{\bm{\Phi}}_{uu},
    \bm{\Delta}_3 = -\hat{\bm{\Phi}}_{yy}\mathbf{G} +\hat{\bm{\Phi}}_{yu},  \bm{\Delta}_4 = -\hat{\bm{\Phi}}_{uy}\mathbf{G}+\hat{\bm{\Phi}}_{uu} - I.$

\begin{remark}
    We note that the residuals $\bm{\Delta}_i (i = 1, \ldots, 4)$ can come from 1) numerical computation when enforcing the equality constraints numerically,  and 2) the controller implementation that uses floating-point arithmetic. We use $\bm{\Delta}_i (i = 1, \ldots, 4)$ to denote such mismatches from the equality constraints. The results in this section work regardless of the phenomenon  generating $\bm{\Delta}_i (i = 1, \ldots, 4)$.
\end{remark}

\begin{theorem} \label{theo:IOProbustness}
  Let $\hat{\bm{\Phi}}_{yy}, \hat{\bm{\Phi}}_{uy},
            \hat{\bm{\Phi}}_{yu}, \hat{\bm{\Phi}}_{uu}$ satisfy~\eqref{eq:iop_error}. Then, we have the following statements.
            \begin{enumerate}
                \item  In the case of $\mathbf{G} \in \mathcal{RH}_{\infty}$, the controller $\mathbf{K} = \hat{\bm{\Phi}}_{uy}\hat{\bm{\Phi}}_{yy}^{-1}$ internally stabilizes the plant $\mathbf{G}$ if and only if $(I + \bm{\Delta}_1)^{-1}$ is stable.
                \item In the case of $\mathbf{G} \notin \mathcal{RH}_{\infty}$, the controller $\mathbf{K} = \hat{\bm{\Phi}}_{uy}\hat{\bm{\Phi}}_{yy}^{-1}$
                fails to internally stabilize the closed-loop system due to non-zero residuals $\bm{\Delta}_i$.
            \end{enumerate}
\end{theorem}

\begin{proof}
Given a controller $\mathbf{K}$, the closed-loop responses from $(\bm{\delta}_y, \bm{\delta}_u)$ to $(\mathbf{y},\mathbf{u})$ are
\begin{equation*}
    \begin{bmatrix}
        \mathbf{y} \\
        \mathbf{u}
    \end{bmatrix} = \begin{bmatrix} (I - \mathbf{G}\mathbf{K})^{-1} & (I - \mathbf{G}\mathbf{K})^{-1}\mathbf{G} \\
    \mathbf{K}(I - \mathbf{G}\mathbf{K})^{-1} &                 I + \mathbf{K}(I - \mathbf{G}\mathbf{K})^{-1}\mathbf{G}\end{bmatrix}\begin{bmatrix}
        \bm{\delta}_{y} \\
        \bm{\delta}_{u}
    \end{bmatrix}.
\end{equation*}
Considering $\mathbf{K} = \hat{\bm{\Phi}}_{uy}\hat{\bm{\Phi}}_{yy}^{-1}$, where $\hat{\bm{\Phi}}_{yy}, \hat{\bm{\Phi}}_{uy},
            \hat{\bm{\Phi}}_{yu}, \hat{\bm{\Phi}}_{uu}$ satisfy~\eqref{eq:iop_error}, we can verify the following identities:

            \begin{equation} \label{eq:ioptf_error}
                \begin{aligned}
                    (I - \mathbf{G}\mathbf{K})^{-1} &= (I - \mathbf{G}\hat{\bm{\Phi}}_{uy}\hat{\bm{\Phi}}_{yy}^{-1})^{-1} \\
                    &=  \hat{\bm{\Phi}}_{yy}(I + \bm{\Delta}_1)^{-1}, \\
                (I - \mathbf{G}\mathbf{K})^{-1}\mathbf{G} &  = \hat{\bm{\Phi}}_{yy}(I + \bm{\Delta}_1)^{-1}\mathbf{G}, \\
                \mathbf{K}(I - \mathbf{G}\mathbf{K})^{-1} &= \hat{\bm{\Phi}}_{uy}(I + \bm{\Delta}_1)^{-1}, \\
                \end{aligned}
            \end{equation}
            and $ I + \mathbf{K}(I - \mathbf{G}\mathbf{K})^{-1}\mathbf{G} = I + \hat{\bm{\Phi}}_{uy}(I + \bm{\Delta}_1)^{-1}\mathbf{G}.$

    \emph{Proof of Statement 1:} Suppose that $\mathbf{G} \in \mathcal{RH}_{\infty}$. If $(I + \bm{\Delta}_1)^{-1}$ is stable, it is easy to verify that all transfer matrices in~\eqref{eq:ioptf_error} are stable. This means that
    \vspace{-2mm}
    $$
        \left(\begin{bmatrix}\delta_y \\ \delta_u\end{bmatrix} \rightarrow \begin{bmatrix}\mathbf{y} \\ \mathbf{u}\end{bmatrix}\right) \in \mathcal{RH}_{\infty}.
        \vspace{-2mm}
    $$
    By Theorem~\ref{Theo:mainresult}, we know $\mathbf{K} = \hat{\bm{\Phi}}_{uy}\hat{\bm{\Phi}}_{yy}^{-1}$ internally stabilizes the plant $\mathbf{G}$. If $(I+\bm{\Delta}_1)^{-1}$ is unstable, then the closed-loop response from $\bm{\delta}_y$ to $\mathbf{y}$ will be unstable in general, and thus the controller does not internally stabilize the system.

    \emph{Proof of Statement 2:} If $\mathbf{G}$ is unstable, the transfer matrices in~\eqref{eq:ioptf_error} cannot be guaranteed to be stable if $\bm{\Delta}_1 \neq 0$. When $\bm{\Delta}_1 = 0 $, we have $(I - \mathbf{G}\mathbf{K})^{-1}\mathbf{G} = \hat{\bm{\Phi}}_{yy}\mathbf{G} = \hat{\bm{\Phi}}_{yu} - \bm{\Delta}_3$ and
    $
     I + \mathbf{K}(I - \mathbf{G}\mathbf{K})^{-1}\mathbf{G} = I + \hat{\bm{\Phi}}_{uy}\mathbf{G} = \hat{\bm{\Phi}}_{uu} - \bm{\Delta}_4.
    $
    Note that the {residuals} $\bm{\Delta}_3, \bm{\Delta}_4$ in~\eqref{eq:iop_error} can be unstable if $\mathbf{G}$ is unstable (since the product of an unstable transfer matrix and a stable one can be unstable). Thus, the controller fails to guarantee the internal stability of the closed-loop system unless $\bm{\Delta}_1 = 0,\bm{\Delta}_2 = 0,\bm{\Delta}_3 = 0,\bm{\Delta}_4 = 0$.
\end{proof}

We now focus on the SLP in Proposition~\ref{prop:slp1}. The transfer matrices $ \hat{\bm{\Phi}}_{xx}, \hat{\bm{\Phi}}_{ux},
            \hat{\bm{\Phi}}_{xy}, \hat{\bm{\Phi}}_{uy}$ only approximately satisfy the affine constraint~\eqref{eq:slp1_constraint}, \emph{i.e.}, we have
            \vspace{-2mm}
 \begin{equation}\label{eq:slp_error}
    \begin{aligned}
    \begin{bmatrix} zI - A & -B \end{bmatrix}\begin{bmatrix}
           \hat{\bm{\Phi}}_{xx} & \hat{\bm{\Phi}}_{xy} \\
            \hat{\bm{\Phi}}_{ux} & \hat{\bm{\Phi}}_{uy}
        \end{bmatrix} &= \begin{bmatrix} I+\hat{\bm{\Delta}}_1 & \hat{\bm{\Delta}}_2 \end{bmatrix}, \\
            \begin{bmatrix}
            \hat{\bm{\Phi}}_{xx} & \hat{\bm{\Phi}}_{xy} \\
            \hat{\bm{\Phi}}_{ux} & \hat{\bm{\Phi}}_{uy}
        \end{bmatrix}\begin{bmatrix}  zI - A \\ -C \end{bmatrix} &= \begin{bmatrix} I + \hat{\bm{\Delta}}_3 \\ \hat{\bm{\Delta}}_4\end{bmatrix}, \\
       \hat{\bm{\Phi}}_{xx}, \hat{\bm{\Phi}}_{ux},
            \hat{\bm{\Phi}}_{xy}, \hat{\bm{\Phi}}_{uy} &\in \mathcal{RH}_{\infty},
        \end{aligned}
        \vspace{-2mm}
\end{equation}
where the {residuals} are $\hat{\bm{\Delta}}_1 = (zI - A)\hat{\bm{\Phi}}_{xx} - B\hat{\bm{\Phi}}_{ux} - I, $ and

    $$
    \begin{aligned}
     \hat{\bm{\Delta}}_2 &= (zI - A)\hat{\bm{\Phi}}_{xy} - B\hat{\bm{\Phi}}_{uy}, \\
    \hat{\bm{\Delta}}_3 &= \hat{\bm{\Phi}}_{xx}(zI - A) - \hat{\bm{\Phi}}_{xy}C - I \\
    \hat{\bm{\Delta}}_4 &= \hat{\bm{\Phi}}_{ux}(zI - A) - \hat{\bm{\Phi}}_{uy}C.
\end{aligned}
$$

Note that there are multiple ways to recover the controller $\mathbf{K}$ in the SLP framework. The {SLP} controller can also be recovered in another way as $\mathbf{K} = \hat{\bm{\Phi}}_{uy}(I + C\hat{\bm{\Phi}}_{xy})^{-1}$~\cite{wang2019system}. Now, we have the following result.

\begin{theorem} \label{theo:SLProbustness}
  Let $\hat{\bm{\Phi}}_{xx}, \hat{\bm{\Phi}}_{ux},
            \hat{\bm{\Phi}}_{xy}, \hat{\bm{\Phi}}_{uy}$ satisfy~\eqref{eq:slp_error}. We have the following statements.
            \begin{enumerate}
                \item  In the state feedback case, i.e., $C = I$, the controller $\mathbf{K} = \hat{\bm{\Phi}}_{ux}\hat{\bm{\Phi}}_{xx}^{-1}$ internally stabilizes the plant $\mathbf{G}$ if and only if $(I + \hat{\bm{\Delta}}_1)^{-1}$ is stable.
                \item The four-block SLP controller $\mathbf{K} = \hat{\bm{\Phi}}_{uy} - \hat{\bm{\Phi}}_{ux}\hat{\bm{\Phi}}_{xx}^{-1}\hat{\bm{\Phi}}_{xy}$ cannot guarantee the internal stability of the closed-loop system if
                $
                (I + \hat{\bm{\Delta}})^{-1}
                $
                is unstable, where\footnote{Note that because $\|\hat{\bm{\Delta}}\|_{\infty}$ may large than $1$, there is no guarantee that $(I + \hat{\bm{\Delta}})^{-1}$ is always stable. See Example~\eqref{eq:SLPcounterex}.}
                \begin{equation}\label{eq:Runcertainty} \hat{\bm{\Delta}}:=\hat{\bm{\Delta}}_3+ \hat{\bm{\Phi}}_{xx}(I + \hat{\bm{\Delta}}_1)^{-1}\big(B\hat{\bm{\Delta}}_4 - (zI - A)\hat{\bm{\Delta}}_3\big).
                \end{equation}
                \item For the controller $\mathbf{K} = \hat{\bm{\Phi}}_{uy}(I + C\hat{\bm{\Phi}}_{xy})^{-1}$,

                 \begin{enumerate}
                \item[a)] if $\mathbf{G} \in \mathcal{RH}_{\infty}$, $\mathbf{K}$  internally stabilizes the plant $\mathbf{G}$ if and only if $(I + C(zI - A)^{-1}\hat{\bm{\Delta}}_2)^{-1}$ is stable.
                \item[b)] if $\mathbf{G}\notin \mathcal{RH}_{\infty}$, $\mathbf{K}$ fails to internally stabilizes the closed-loop system due to non-zero residuals $\hat{\bm{\Delta}}_i, i = 1, \ldots, 4$.
            \end{enumerate}
                \end{enumerate}
\end{theorem}

\begin{proof}
The proof of Statement 1 is presented in~\cite[Theorem 4.3]{anderson2019system}. We prove the second statement here. Given $\hat{\bm{\Phi}}_{xx}, \hat{\bm{\Phi}}_{ux},
            \hat{\bm{\Phi}}_{xy}, \hat{\bm{\Phi}}_{uy}$ satisfying~\eqref{eq:slp_error} and the controller $\mathbf{K} = \hat{\bm{\Phi}}_{uy} - \hat{\bm{\Phi}}_{ux}\hat{\bm{\Phi}}_{xx}^{-1}\hat{\bm{\Phi}}_{xy}$, we consider the closed-loop response from $\bm{\delta}_x$ to $\mathbf{x}$. After some tedious algebra (see Appendix~\ref{app:slperror}), we derive
            \begin{equation} \label{eq:SLPuncertainty}
    \begin{aligned}
        (zI - A - B \mathbf{K} C)^{-1}
        = (I + \hat{\bm{\Delta}})^{-1}\hat{\bm{\Phi}}_{xx}(I + \hat{\bm{\Delta}}_1)^{-1},
    \end{aligned}
\end{equation}
with $\hat{\bm{\Delta}}$ defined in~\eqref{eq:Runcertainty}.
If $(I + \hat{\bm{\Delta}})^{-1}$ is unstable, there is no guarantee that the closed-loop response from $\bm{\delta}_x$ to $\mathbf{x}$ is stable. In this case, the controller $\mathbf{K} = \hat{\bm{\Phi}}_{uy} - \hat{\bm{\Phi}}_{ux}\hat{\bm{\Phi}}_{xx}^{-1}\hat{\bm{\Phi}}_{xy}$ cannot internally stabilize the plant.

For Statement 3, considering Corollary~\ref{prop:stable}, we only need to check the closed-loop response from $\bm{\delta}_y$ to $\mathbf{u}$, which is
    $$
    \begin{aligned}
    &\mathbf{K}(I - \mathbf{G}\mathbf{K})^{-1} \\
    = &\hat{\bm{\Phi}}_{uy}(I + C\hat{\bm{\Phi}}_{xy})^{-1} (I - \mathbf{G}\hat{\bm{\Phi}}_{uy}(I + C\hat{\bm{\Phi}}_{xy})^{-1})^{-1}  \\
    =& \hat{\bm{\Phi}}_{uy} (I + C\hat{\bm{\Phi}}_{xy} - C(zI - A)^{-1}B\hat{\bm{\Phi}}_{uy})^{-1} \\
    = & \hat{\bm{\Phi}}_{uy} (I + C(zI - A)^{-1}\hat{\bm{\Delta}}_2)^{-1}.
    \end{aligned}
    $$
   The rest of the proof is similar to Theorem~\ref{theo:IOProbustness}.
\end{proof}

Theorem~\ref{theo:SLProbustness} quantifies the numerical robustness of different controller recovery in the SLP due to the mismatches of the associated equality constraints. The mismatches come from  floating-point  arithmetic  in either numerical computation or controller implementation. This is irrespective of whether the FIR approximation is used for the closed-loop responses. Similar robustness results can be derived for the Mixed I/II parameterizations (Propositions~\ref{prop:slp3}/\ref{prop:slp4}); see Appendix~\ref{app:mixedI/II}.
Theorems~\ref{theo:IOProbustness} and \ref{theo:SLProbustness} can now be combined with the small gain theorem~\cite[Theorem 9.1]{zhou1996robust} to provide simple sufficient conditions for {numerical robustness}. 
\begin{corollary} \label{coro:robustness}
    {Let $\hat{\bm{\Phi}}_{yy}, \hat{\bm{\Phi}}_{uy},
            \hat{\bm{\Phi}}_{yu}, \hat{\bm{\Phi}}_{uu}$ satisfy~\eqref{eq:iop_error}. Then}
    \begin{itemize}
        \item[--] for open-loop stable plants, the IOP controller $\mathbf{K} = \hat{\bm{\Phi}}_{uy}\hat{\bm{\Phi}}_{yy}^{-1}$ internally stabilizes the plant if $\|\bm{\Delta}_1\|_{\infty} < 1$.
    \end{itemize}
         {Let $\hat{\bm{\Phi}}_{xx}, \hat{\bm{\Phi}}_{ux},
            \hat{\bm{\Phi}}_{xy}, \hat{\bm{\Phi}}_{uy}$ satisfy~\eqref{eq:slp_error}. Then} 
     \begin{itemize}
         \item[--] for the state feedback case, the SLP controller $\mathbf{K} = \hat{\bm{\Phi}}_{ux}\hat{\bm{\Phi}}_{xx}^{-1}$ internally stabilizes the plant  if $\|\hat{\bm{\Delta}}_1\|_{\infty} < 1$.
         \item[--] for open-loop stable plants,  the SLP controller $\mathbf{K} = \hat{\bm{\Phi}}_{uy}(I + C\hat{\bm{\Phi}}_{xy})^{-1}$ internally stabilizes the plant if
         $$\|\hat{\bm{\Delta}}_2\|_{\infty} \leq \frac{1}{\|C(zI - A)\|_{\infty}}.$$
     \end{itemize}

\end{corollary}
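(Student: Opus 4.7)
The plan is to reduce each claim to an application of the small gain theorem, using the exact stability characterizations already established in Theorems~\ref{theo:IOProbustness} and~\ref{theo:SLProbustness}. The common template is: (a) invoke the relevant theorem to reduce internal stability to the stability of a specific inverse transfer matrix of the form $(I+\mathbf{M})^{-1}$; (b) verify that $\mathbf{M}\in\mathcal{RH}_\infty$ so that the small gain theorem applies; (c) bound $\|\mathbf{M}\|_\infty$ by the given quantity and conclude $\|\mathbf{M}\|_\infty<1$.

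For the first bullet (IOP, open-loop stable plants), Theorem~\ref{theo:IOProbustness}(1) tells us that $\mathbf{K}=\hat{\bm{\Phi}}_{uy}\hat{\bm{\Phi}}_{yy}^{-1}$ internally stabilizes $\mathbf{G}$ if and only if $(I+\bm{\Delta}_1)^{-1}$ is stable. Since $\mathbf{G}\in\mathcal{RH}_\infty$ by assumption and $\hat{\bm{\Phi}}_{yy},\hat{\bm{\Phi}}_{uy}\in\mathcal{RH}_\infty$ by~\eqref{eq:iop_error}, the residual $\bm{\Delta}_1=\hat{\bm{\Phi}}_{yy}-\mathbf{G}\hat{\bm{\Phi}}_{uy}-I\in\mathcal{RH}_\infty$. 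The small gain theorem~\cite[Theorem 9.1]{zhou1996robust} then yields that $\|\bm{\Delta}_1\|_\infty<1$ implies $(I+\bm{\Delta}_1)^{-1}\in\mathcal{RH}_\infty$, which closes the argument.

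For the second bullet (state-feedback SLP), Theorem~\ref{theo:SLProbustness}(1) reduces internal stability to stability of $(I+\hat{\bm{\Delta}}_1)^{-1}$. Here we need to check that $\hat{\bm{\Delta}}_1=(zI-A)\hat{\bm{\Phi}}_{xx}-B\hat{\bm{\Phi}}_{ux}-I$ lies in $\mathcal{RH}_\infty$; this follows because in the state-feedback formulation (Corollary~\ref{coro:statepara}) one enforces $\hat{\bm{\Phi}}_{xx},\hat{\bm{\Phi}}_{ux}\in\frac{1}{z}\mathcal{RH}_\infty$, so the multiplication by $zI-A$ remains proper and stable. The small gain theorem then gives the conclusion. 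For the third bullet, Theorem~\ref{theo:SLProbustness}(3a) reduces the question to the stability of $\bigl(I+C(zI-A)^{-1}\hat{\bm{\Delta}}_2\bigr)^{-1}$; since $A$ is stable we have $C(zI-A)^{-1}\in\mathcal{RH}_\infty$, and the submultiplicativity of $\|\cdot\|_\infty$ gives
\begin{equation*}
\bigl\|C(zI-A)^{-1}\hat{\bm{\Delta}}_2\bigr\|_\infty \;\le\; \bigl\|C(zI-A)^{-1}\bigr\|_\infty\,\|\hat{\bm{\Delta}}_2\|_\infty \;<\;1
\end{equation*}
under the stated bound (reading the denominator in the corollary as $\|C(zI-A)^{-1}\|_\infty$), so small gain again applies.

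I do not expect a serious obstacle: all the real work has been done in Theorems~\ref{theo:IOProbustness} and~\ref{theo:SLProbustness}, which translate internal stability into an invertibility statement of the form $(I+\mathbf{M})^{-1}\in\mathcal{RH}_\infty$, and the small gain theorem handles the rest. The only point deserving care is the stability of the residuals themselves, in particular $\hat{\bm{\Delta}}_1$ in the state-feedback case, where the strict-properness constraint $\hat{\bm{\Phi}}_{xx},\hat{\bm{\Phi}}_{ux}\in\frac{1}{z}\mathcal{RH}_\infty$ is essential so that multiplication by $zI-A$ does not introduce an improper term. Once this verification is made, the three statements follow by identical one-line applications of the small gain inequality.
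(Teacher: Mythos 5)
Your proposal is correct and follows exactly the route the paper intends: the corollary is stated as an immediate consequence of Theorems~\ref{theo:IOProbustness} and~\ref{theo:SLProbustness} combined with the small gain theorem, which is precisely your template, and your extra check that the residuals (in particular $\hat{\bm{\Delta}}_1$ in the state-feedback case, via $\hat{\bm{\Phi}}_{xx},\hat{\bm{\Phi}}_{ux}\in\frac{1}{z}\mathcal{RH}_\infty$) lie in $\mathcal{RH}_\infty$ is a worthwhile detail the paper leaves implicit. Your reading of the denominator in the third bullet as $\|C(zI-A)^{-1}\|_\infty$ is the intended one.
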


The sufficient condition for robustness of the SLP state feedback case 
first appeared in~\cite{matni2017scalable}, which is one key result in the recent learning-based control applications~\cite{dean2017sample,dean2018regret}.
{
\begin{remark} \label{remark:stability}
    The controller recovery in IOP/Mixed I/II, the two-block state-feedback SLP controller, and the SLP controller $\mathbf{K} = \hat{\bm{\Phi}}_{uy}(I + C\hat{\bm{\Phi}}_{xy})^{-1}$ only involve two parameters explicitly. Thus, their robustness analysis is straightforward. The corresponding analysis for the four-block SLP controller $\mathbf{K} = \hat{\bm{\Phi}}_{uy} - \hat{\bm{\Phi}}_{ux}\hat{\bm{\Phi}}_{xx}^{-1}\hat{\bm{\Phi}}_{xy}$, instead, does not provide a numerical robustness result. As shown in Theorem~\ref{theo:SLProbustness}, the residuals $\hat{\bm{\Delta}}_i, i = 1, \ldots, 4$ play a more complex role in the resulting closed-loop responses, irrespective of state- or output-feedback, or open-loop stability of the plant. Since the closed-loop responses from $\bm{\delta}_x, \bm{\delta}_y, \bm{\delta}_u$ to $\bm{x}, \bm{y}, \bm{u}$ can be computed using these residuals $\hat{\bm{\Delta}}_i$, one may find sophisticated sufficient conditions on the residuals $\hat{\bm{\Delta}}_i$ to ensure the internally stability of the closed-loop system. These conditions might relate these residuals with
   numerical solutions such as $\hat{\bm{\Phi}}_{xx}$. Deriving such conditions and finding tractable ways to enforce these conditions are beyond the scope of this paper and left as future work.
\vspace{-1mm}

\end{remark}

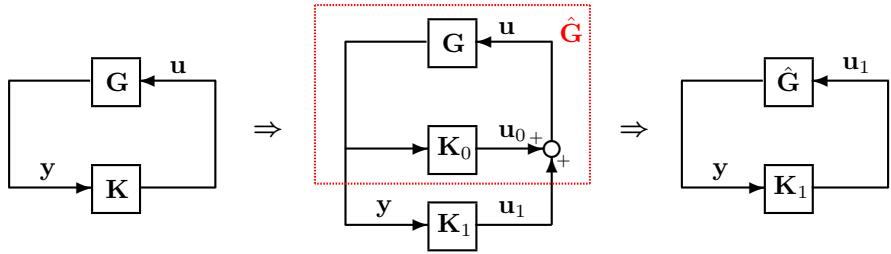
\begin{figure*}[t]
\vspace*{-3mm}
   \begin{center}
      \raisebox{7mm}{\setlength{\unitlength}{0.008in}
\begin{picture}(243,115)(140,410)
\thicklines
\put(179.5,440){\vector(1, 0){ 55}}
\put(180,510){\line( 0, -1){ 70}}
\put(179.5,510){\line( 1, 0){ 53.5}}

\put(315.5,510){\vector( -1, 0){ 50.5}}
\put(315,439.5){\line( 0,1){ 70.5}}
\put(265,440){\line(1, 0){ 50}}

\put(235,424){\framebox(30,30){}}
\put(243,433){\makebox(0,0)[lb]{$\mathbf{K}$}}

\put(235,494){\framebox(30,30){}}
\put(243,503){\makebox(0,0)[lb]{$\mathbf{G}$}}

\put(200,445){\makebox(0,0)[lb]{$\mathbf{y}$}}
\put(285,515){\makebox(0,0)[lb]{$\mathbf{u}$}}



\end{picture}}
      \hspace{-12mm}
      \raisebox{20mm}{$\bm{\Rightarrow}$}
      \hspace{-3mm}
      \setlength{\unitlength}{0.008in}
\begin{picture}(243,200)(140,350)
\thicklines
\put(179.5,440){\vector(1, 0){ 55}}
\put(180,510){\line( 0, -1){ 70}}
\put(179.5,510){\line( 1, 0){ 53.5}}

\put(315.5,510){\vector( -1, 0){ 50.5}}
\put(315,444.5){\line( 0,1){ 65.5}}
\put(265,440){\vector(1, 0){ 45}}

\put(235,424){\framebox(30,30){}}
\put(240,433){\makebox(0,0)[lb]{$\mathbf{K}_0$}}

\put(235,494){\framebox(30,30){}}
\put(243,503){\makebox(0,0)[lb]{$\mathbf{G}$}}

\put(200,395){\makebox(0,0)[lb]{$\mathbf{y}$}}
\put(280,515){\makebox(0,0)[lb]{$\mathbf{u}$}}
\put(280,445){\makebox(0,0)[lb]{$\mathbf{u}_0$}}
\put(280,395){\makebox(0,0)[lb]{$\mathbf{u}_1$}}



\put(180,440){\line( 0, -1){ 50}}

\put(235,374){\framebox(30,30){}}
\put(240,383){\makebox(0,0)[lb]{$\mathbf{K}_1$}}
\put(179.5,390){\vector(1, 0){ 55}}
\put(315,440){\circle{10}}

\put(265,390){\line(1, 0){ 50}}
\put(315,390){\vector(0, 1){ 45}}

\put(300,443){\makebox(0,0)[lb]{\tiny +}}
\put(318,428){\makebox(0,0)[lb]{\tiny +}}

{\color{red}
\put(160,417){\dashbox(180,117){}} 
\put(320,510){\makebox(0,0)[lb]{$\hat{\mathbf{G}}$}}
}

\end{picture}
      \hspace{-8mm}
      \raisebox{20mm}{$\bm{\Rightarrow}$}
       \hspace{-7mm}
      \raisebox{7mm}{\setlength{\unitlength}{0.008in}
\begin{picture}(243,115)(140,410)
\thicklines
\put(179.5,440){\vector(1, 0){ 55}}
\put(180,510){\line( 0, -1){ 70}}
\put(179.5,510){\line( 1, 0){ 53.5}}

\put(315.5,510){\vector( -1, 0){ 50.5}}
\put(315,439.5){\line( 0,1){ 70.5}}
\put(265,440){\line(1, 0){ 50}}

\put(235,424){\framebox(30,30){}}
\put(240,433){\makebox(0,0)[lb]{$\mathbf{K}_1$}}

\put(235,494){\framebox(30,30){}}
\put(242,503){\makebox(0,0)[lb]{$\hat{\mathbf{G}}$}}

\put(200,445){\makebox(0,0)[lb]{$\mathbf{y}$}}
\put(285,515){\makebox(0,0)[lb]{$\mathbf{u}_1$}}



\end{picture}}
  \end{center}
  \vspace*{-6mm}
  \caption{Given an initial controller $\mathbf{K}_0 \in \mathcal{C}_{\text{stab}}\cap \mathcal{RH}_{\infty}$, we search for $\mathbf{K}_1$ to stabilize the new stable plant $\hat{\mathbf{G}} := (I - \mathbf{G}\mathbf{K}_0)^{-1}\mathbf{G}$. }
    \label{fig:initialStabilizing}
\end{figure*}

\subsection{Implications in numerical computation and controller implementation} \label{subsection:numerical}
Here, we discuss the implication of Theorems~\ref{theo:IOProbustness}, \ref{theo:SLProbustness} and Corollary~\ref{coro:robustness}. 
In practice, the {residuals} $\bm{\Delta}_1 = \hat{\bm{\Phi}}_{yy} - \mathbf{G}\hat{\bm{\Phi}}_{uy} - I$ ({when $\mathbf{G}$ is stable}) and  $\hat{\bm{\Delta}}_1 = (zI - A)\hat{\bm{\Phi}}_{xx} - B\hat{\bm{\Phi}}_{xy} - I $ are very small numerically. It is fairly safe to say that {$\|\bm{\Delta}_1\|_{\infty} < 1$} and  $\|\hat{\bm{\Delta}}_1\|_{\infty} < 1$ in floating-point implementation and numerical computation using any common interior-point solvers, such as SeDuMi~\cite{sturm1999using} and Mosek~\cite{andersen2000mosek}.} Similar statements are true for the Mixed I/II parameterizations. This observation leads to the following summary (see Table~\ref{tab:comparision} for an overview).

 \vspace{2mm}

 \noindent{\bf Numerical Robustness}. {\it
  Consider closed-loop parameterizations~(SLP, IOP, Mixed I/II) in numerical computation. We have
\begin{enumerate}
    \item[(i)] the SLP with controller $\mathbf{K} = \hat{\bm{\Phi}}_{ux}\hat{\bm{\Phi}}_{xx}^{-1}$ is numerically robust in the state feedback case;
    \item[(ii)] the IOP, Mixed I/II, and SLP with controller $\hat{\bm{\Phi}}_{uy}(I + C\hat{\bm{\Phi}}_{xy})^{-1}$ are numerically robust for open-loop stable plants.
\end{enumerate}
On the other hand, we have
\begin{enumerate}
\item[(I)] the IOP, Mixed I/II, and SLP with controller $\hat{\bm{\Phi}}_{uy}(I + C\hat{\bm{\Phi}}_{xy})^{-1}$ are not numerically robust for open-loop unstable plants, irrespective of having state- or output-feedback;
\item[(II)] the SLP with controller $\mathbf{K} =\hat{\bm{\Phi}}_{uy} - \hat{\bm{\Phi}}_{ux}\hat{\bm{\Phi}}_{xx}^{-1}\hat{\bm{\Phi}}_{xy}$ is not numerically robust {in general}, irrespective of open-loop stability of the plant. 
\end{enumerate}

\vspace{1mm}
}
The statements (i), (ii) and (I) are easy to see from the previous section.
The statement (II) comes form Theorem~\ref{theo:SLProbustness} but requires more attention. Although the computational residuals $\hat{\bm{\Delta}}_1, \hat{\bm{\Delta}}_2, \hat{\bm{\Delta}}_3, \hat{\bm{\Delta}}_4$ in~\eqref{eq:slp_error} are typically very small element-wise by interior-point solvers, we still cannot guarantee that $\|\hat{\bm{\Delta}}\|_{\infty} < 1$ (where $\hat{\bm{\Delta}}$ is defined in~\eqref{eq:Runcertainty}), since $\hat{\bm{\Delta}}$ involves $\hat{\bm{\Phi}}_{xx}$ explicitly.  Consequently, it is possible that $(I + \hat{\bm{\Delta}})^{-1}$ is unstable in numerical computation. Therefore, one may argue that the four-block SLP controller $\mathbf{K} =  \hat{\bm{\Phi}}_{uy} - \hat{\bm{\Phi}}_{ux}\hat{\bm{\Phi}}_{xx}^{-1}\hat{\bm{\Phi}}_{xy}$ is not numerically robust {in general}\footnote{As discussed in Remark~\ref{remark:stability},
sufficient conditions could exist to ensure internal stability, and they will depend on $\hat{\bm{\Phi}}_{xx}$.}. Further, we notice that the controller implementation \eqref{eq:SLPcontrollerimplementation} proposed in~\cite{wang2019system} also suffers the issue of numerical instability, as the right-hand-side of  \eqref{eq:SLPuncertainty} represents the corresponding closed-loop response using~\eqref{eq:SLPcontrollerimplementation}.

\begin{example} \label{ex:slp}
To understand the role of the residuals, we present a simple example. Consider a stable LTI system~\eqref{eq:LTI} with
$
    A = 0, \quad B = 1, \quad C = 1.
$
It can be verified that the following transfer functions
\begin{equation} \label{eq:SLPcounterex}
\begin{aligned}
\hat{\bm{\Phi}}_{xx} &=  \frac{1}{z}+\frac{(z - 5)(z + 6)^2}{z^5}, \\
    \hat{\bm{\Phi}}_{ux} &=\frac{(z - 5)(z + 6)^2}{z^4},\\
    \hat{\bm{\Phi}}_{xy} &=\frac{(z - 5)(z + 6)^2}{z^4} - \frac{1}{1000}\frac{(z + 2)^2}{z^3},\\
    \hat{\bm{\Phi}}_{uy} &= \frac{(z - 5)(z + 6)^2}{z^3}, \\
\end{aligned}
\end{equation}
satisfy~\eqref{eq:slp_error} with {residuals}
\begin{equation*} 
\begin{aligned}
\hat{\bm{\Delta}}_1 = 0,
\hat{\bm{\Delta}}_2 = -\frac{(z + 2)^2}{1000z^2},
\hat{\bm{\Delta}}_3 =\frac{(z + 2)^2}{1000z^3},
\hat{\bm{\Delta}}_4 = 0.
\end{aligned}
\end{equation*}
For this example, we verify that $(I + \hat{\bm{\Delta}})^{-1}$ has a pair of unstable poles $z = 0.9522 \pm 0.5226i$, {despite the norm $\|\hat{\Delta}_3\|_{\infty}=9\times 10^{-3}$ being very small}, and that this pair of unstable poles also {appears} in the closed-loop system $(zI - A - B\mathbf{K}C)^{-1}$ using the controller $\mathbf{K} =  \hat{\bm{\Phi}}_{uy} - \hat{\bm{\Phi}}_{ux}\hat{\bm{\Phi}}_{xx}^{-1}\hat{\bm{\Phi}}_{xy}$.
This example is open-loop stable and it is also in state feedback form. Nonetheless, a small residual can destabilize the closed-loop using the four-block SLP controller.
\end{example}

We remark that in Example~\ref{ex:slp}, since $A = 0$, the optimal LQR controller will be $\mathbf{K} = 0$ for any weight matrices $Q$ and $R$. Thus, any sensible formulation of optimal control problems using the SLP will not lead to the highly suboptimal solution~\eqref{eq:SLPcounterex}. However, we emphasize that the numerical residuals play a complex role in the closed-loop system using the four-block SLP controller, and residuals with a small norm may lead to an undesirable destabilization situation. Indeed, numerical instability is tightly linked to the specific controller recovery.
Unlike the four-block SLP controller, from Corollary~\ref{coro:robustness}, the state feedback SLP controller $\mathbf{K} = \hat{\bm{\Phi}}_{ux}\hat{\bm{\Phi}}_{xx}^{-1}$ is numerically robust as long as $\|\hat{\bm{\Delta}}_1\|_{\infty}< 1$. Since $\hat{\bm{\Delta}}_1 = 0$ in Example~\ref{ex:slp}, the closed-loop system $(zI - A - B\mathbf{K}C)^{-1}$ has all zero eigenvalues using $\mathbf{K} = \hat{\bm{\Phi}}_{ux}\hat{\bm{\Phi}}_{xx}^{-1}$. Meanwhile, we can verify that $\|C(zI - A)^{-1}\hat{\bm{\Delta}}_2\|_{\infty} = 0.009 < 1$, thus it is guaranteed that the controller $\mathbf{K} = \hat{\bm{\Phi}}_{uy}(I +C \hat{\bm{\Phi}}_{xy})^{-1} $ internally stabilizes the plant (the largest norm of the closed-loop eigenvalues is 0.1675).

The question remains whether the phenomenon highlighted in Example~\ref{ex:slp} may lead to numerical instability when solving optimal control formulation in practice. We observed several cases where the four-block SLP controller failed to stabilize the plant even using the default setting (high precision) in Mosek~\cite{andersen2000mosek} for numerical computation\footnote{See the examples at~\url{https://github.com/zhengy09/h2_clp}, where the system matrices $A \in \mathbb{R}^{3 \times 3}, B \in \mathbb{R}^{3 \times 1}, C \in \mathbb{R}^{1 \times 3}$ have integer elements randomly generated from $-5$ to $5$, and the weight matrices are chosen $Q = I, R = I$ in~\eqref{eq:H2_s2}. }. This is likely due to $\|\hat{\bm{\Phi}}_{xx}\|_{\infty}$ being high, despite solving an optimal control formulation. How to avoid this issue requires more investigations, which is left for future work.

\subsection{Open-loop unstable plants and relation with the Youla parameterization}
To characterize the set of internally stabilizing controllers $\mathcal{C}_{\text{stab}}$, the closed-loop parameterizations in Proposition~\ref{prop:slp1}--\ref{prop:slp4} can avoid computing the doubly co-prime factorization of the plant \emph{a priori}, but they all need to impose a set of affine constraints for achievable closed-loop responses. As discussed above, any small mismatch in the additional affine constraints can make the resulting controller un-implementable when the plant is open-loop unstable (IOP, Mixed I/II), and the four-block SLP controller requires a case-by-case investigation.

{For the case of open-loop unstable plants, there exists a valid remedy by pre-stabilizing the plant.} Suppose that $\mathbf{G}$ is unstable, and that a \textit{stable} and \textit{stabilizing} controller $\mathbf{K}_0$ is known \emph{a priori}. We can split the control signal as%
 $
    \mathbf{u} = \mathbf{K}_0\mathbf{y} + \mathbf{u}_1,
 $
 and design $\mathbf{u}_1$. This is equivalent to applying the closed-loop parameterization to the new stable plant $\hat{\mathbf{G}} := (I - \mathbf{G}\mathbf{K}_0)^{-1}\mathbf{G}$ (see Figure~\ref{fig:initialStabilizing} for illustration).
Upon defining
$$
     \hat{\mathcal{C}}_{\text{stab}} := \{\mathbf{K}_0 + \mathbf{K}_1 \mid \mathbf{K}_1\;\; \text{internally stabilizes} \;\; \hat{\mathbf{G}}\},
$$
we have the following result.

\begin{proposition} \label{prop:initialcontroller}
Given an initial controller $\mathbf{K}_0 \in \mathcal{C}_{\text{stab}} \cap \mathcal{RH}_{\infty}$, we have
   $
         \mathcal{C}_{\text{stab}} =  \hat{\mathcal{C}}_{\text{stab}}.
   $
\end{proposition}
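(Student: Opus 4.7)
The plan is to establish both inclusions of $\mathcal{C}_{\text{stab}}=\hat{\mathcal{C}}_{\text{stab}}$ by translating internal stability into stability of closed-loop responses via Lemma~\ref{lemma:equivalence}, and then bridging the two loops $(\mathbf{G},\mathbf{K}_0+\mathbf{K}_1)$ and $(\hat{\mathbf{G}},\mathbf{K}_1)$ through one algebraic factorization. The starting observation is that $\hat{\mathbf{G}}=(I-\mathbf{G}\mathbf{K}_0)^{-1}\mathbf{G}$ coincides with the closed-loop response $\bm{\Phi}^{(\mathbf{G},\mathbf{K}_0)}_{yu}$, hence lies in $\mathcal{RH}_{\infty}$ because $\mathbf{K}_0\in\mathcal{C}_{\text{stab}}$, and it inherits strict properness from $\mathbf{G}$. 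So the feedback $(\hat{\mathbf{G}},\mathbf{K}_1)$ is well-posed and Corollary~\ref{prop:stable} applies: $\mathbf{K}_1$ stabilizes $\hat{\mathbf{G}}$ iff $\mathbf{K}_1(I-\hat{\mathbf{G}}\mathbf{K}_1)^{-1}\in\mathcal{RH}_{\infty}$. The central identity is the factorization
\begin{equation*}
I-\mathbf{G}(\mathbf{K}_0+\mathbf{K}_1)=(I-\mathbf{G}\mathbf{K}_0)(I-\hat{\mathbf{G}}\mathbf{K}_1),
\end{equation*}
obtained by substituting $\hat{\mathbf{G}}=(I-\mathbf{G}\mathbf{K}_0)^{-1}\mathbf{G}$.

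For the inclusion $\hat{\mathcal{C}}_{\text{stab}}\subseteq\mathcal{C}_{\text{stab}}$, I assume $\mathbf{K}_1$ stabilizes $\hat{\mathbf{G}}$ and set $\mathbf{K}:=\mathbf{K}_0+\mathbf{K}_1$. Using the factorization, each of the four closed-loop responses of $(\mathbf{G},\mathbf{K})$ from $(\bm{\delta}_y,\bm{\delta}_u)$ to $(\mathbf{y},\mathbf{u})$ can be rewritten as a combination of $(I-\mathbf{G}\mathbf{K}_0)^{-1}\in\mathcal{RH}_{\infty}$, $\mathbf{K}_0\in\mathcal{RH}_{\infty}$, and the (stable by assumption) closed-loop responses of $(\hat{\mathbf{G}},\mathbf{K}_1)$. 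For instance, $\bm{\Phi}^{(\mathbf{G},\mathbf{K})}_{yu}=\bm{\Phi}^{(\hat{\mathbf{G}},\mathbf{K}_1)}_{yu}$ and $\bm{\Phi}^{(\mathbf{G},\mathbf{K})}_{uy}=\mathbf{K}_0\bm{\Phi}^{(\mathbf{G},\mathbf{K})}_{yy}+\bm{\Phi}^{(\hat{\mathbf{G}},\mathbf{K}_1)}_{uy}(I-\mathbf{G}\mathbf{K}_0)^{-1}$. All four are thus stable, and Lemma~\ref{lemma:equivalence} yields $\mathbf{K}\in\mathcal{C}_{\text{stab}}$.

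For the converse inclusion, I take $\mathbf{K}\in\mathcal{C}_{\text{stab}}$ and define $\mathbf{K}_1:=\mathbf{K}-\mathbf{K}_0$; by Corollary~\ref{prop:stable} on the stable plant $\hat{\mathbf{G}}$, it suffices to check $\mathbf{K}_1(I-\hat{\mathbf{G}}\mathbf{K}_1)^{-1}\in\mathcal{RH}_{\infty}$. Rearranging the factorization and distributing gives
\begin{equation*}
(I-\hat{\mathbf{G}}\mathbf{K}_1)^{-1}=(I-\mathbf{G}\mathbf{K})^{-1}-(I-\mathbf{G}\mathbf{K})^{-1}\mathbf{G}\mathbf{K}_0=\bm{\Phi}^{(\mathbf{G},\mathbf{K})}_{yy}-\bm{\Phi}^{(\mathbf{G},\mathbf{K})}_{yu}\mathbf{K}_0,
\end{equation*}
so $\mathbf{K}_1(I-\hat{\mathbf{G}}\mathbf{K}_1)^{-1}=(\mathbf{K}-\mathbf{K}_0)\bigl(\bm{\Phi}^{(\mathbf{G},\mathbf{K})}_{yy}-\bm{\Phi}^{(\mathbf{G},\mathbf{K})}_{yu}\mathbf{K}_0\bigr)$ expands into four summands, each a product of stable building blocks: $\mathbf{K}\bm{\Phi}^{(\mathbf{G},\mathbf{K})}_{yy}=\bm{\Phi}^{(\mathbf{G},\mathbf{K})}_{uy}$, $\mathbf{K}\bm{\Phi}^{(\mathbf{G},\mathbf{K})}_{yu}\mathbf{K}_0=(\bm{\Phi}^{(\mathbf{G},\mathbf{K})}_{uu}-I)\mathbf{K}_0$, $\mathbf{K}_0\bm{\Phi}^{(\mathbf{G},\mathbf{K})}_{yy}$, and $\mathbf{K}_0\bm{\Phi}^{(\mathbf{G},\mathbf{K})}_{yu}\mathbf{K}_0$, where stability of the last two uses $\mathbf{K}_0\in\mathcal{RH}_{\infty}$.

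I expect the main obstacle to sit precisely in this converse direction. The naive rearrangement $(I-\hat{\mathbf{G}}\mathbf{K}_1)^{-1}=(I-\mathbf{G}\mathbf{K})^{-1}(I-\mathbf{G}\mathbf{K}_0)$ contains the factor $(I-\mathbf{G}\mathbf{K}_0)$, which need not belong to $\mathcal{RH}_{\infty}$ when $\mathbf{G}$ is open-loop unstable, so stability cannot be read off directly. The trick is to expand that product so that $\mathbf{G}\mathbf{K}_0$ is always paired with a $\bm{\Phi}^{(\mathbf{G},\mathbf{K})}$ block, after which $\mathbf{K}_0\in\mathcal{RH}_{\infty}$ delivers stability term by term. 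This is why the hypothesis $\mathbf{K}_0\in\mathcal{C}_{\text{stab}}\cap\mathcal{RH}_{\infty}$ — stable \emph{and} stabilizing — is genuinely stronger than mere internal stabilization and is essential for the reduction to go through in both directions.
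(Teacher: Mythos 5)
Your proof is correct and rests on the same central factorization $I-\mathbf{G}(\mathbf{K}_0+\mathbf{K}_1)=(I-\mathbf{G}\mathbf{K}_0)(I-\hat{\mathbf{G}}\mathbf{K}_1)$ that drives the paper's argument in Appendix~\ref{app:initialcontroller}, and the inclusion $\hat{\mathcal{C}}_{\text{stab}}\subseteq\mathcal{C}_{\text{stab}}$ is handled essentially identically. The one genuine difference is in the converse direction: the paper checks the four closed-loop responses of the pair $(\hat{\mathbf{G}},\mathbf{K}_1)$ against Theorem~\ref{Theo:mainresult}, writing out only the $(I-\hat{\mathbf{G}}\mathbf{K}_1)^{-1}$ block and asserting the rest are similar, whereas you first observe that $\hat{\mathbf{G}}=\bm{\Phi}_{yu}^{(\mathbf{G},\mathbf{K}_0)}$ is stable and strictly proper and then invoke Corollary~\ref{prop:stable}, so that only the single map $\mathbf{K}_1(I-\hat{\mathbf{G}}\mathbf{K}_1)^{-1}$ must be shown stable. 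This buys a shorter and fully explicit converse: the expansion $(\mathbf{K}-\mathbf{K}_0)\bigl(\bm{\Phi}_{yy}-\bm{\Phi}_{yu}\mathbf{K}_0\bigr)$ into four stable summands makes transparent exactly where the hypothesis $\mathbf{K}_0\in\mathcal{RH}_{\infty}$ (as opposed to merely $\mathbf{K}_0\in\mathcal{C}_{\text{stab}}$) is consumed, at the mild cost of implicitly taking internal stabilization of $\hat{\mathbf{G}}$ with respect to a stabilizable and detectable realization, which is the convention the paper uses throughout. As a side remark, your identity $(I-\hat{\mathbf{G}}\mathbf{K}_1)^{-1}=(I-\mathbf{G}\mathbf{K})^{-1}(I-\mathbf{G}\mathbf{K}_0)$ carries the algebraically consistent sign, whereas the corresponding line in Appendix~\ref{app:initialcontroller} displays $(I+\mathbf{G}\mathbf{K}_0)$; the discrepancy is immaterial to the conclusion since both $(I-\mathbf{G}\mathbf{K})^{-1}$ and $(I-\mathbf{G}\mathbf{K})^{-1}\mathbf{G}\mathbf{K}_0$ are stable either way.
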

The proof is based on algebra verification; see Appendix~\ref{app:initialcontroller}. Proposition~\ref{prop:initialcontroller} shows that searching over $\hat{\mathcal{C}}_{\text{stab}}$ has no conservatism. The new plant $\hat{\mathbf{G}} = (I - \mathbf{G}\mathbf{K}_0)^{-1}\mathbf{G}$ is stable, and thus any closed-loop parameterization in Propositions~\ref{prop:slp1}--\ref{prop:slp4} for this plant has good numerical robustness\footnote{For the SLP, we use the controller $\mathbf{K} = \hat{\bm{\Phi}}_{uy}(I + C\hat{\bm{\Phi}}_{xy})^{-1}$.}. As shown in~\cite[Theorem 17]{rotkowitz2006characterization}, giving $\mathbf{K}_0 \in \mathcal{C}_{\text{stab}} \cap \mathcal{RH}_{\infty}$, the Youla parameterization~\eqref{eq:youla} has a simple form as well, since one can choose an explicit doubly-coprime factorization as
$$
\begin{aligned}
    \mathbf{M}_l &= (I - \mathbf{G}\mathbf{K}_0)^{-1}, \quad\quad \mathbf{M}_r = -(I - \mathbf{K}_0\mathbf{G})^{-1} \\
     \mathbf{N}_l &= \mathbf{G}(I - \mathbf{K}_0\mathbf{G})^{-1}, \quad\; \mathbf{N}_r = -\mathbf{G}(I - \mathbf{K}_0\mathbf{G})^{-1}, \\
    \mathbf{U}_l &= -I, \quad \mathbf{V}_l = - \mathbf{K}_0, \;\;\;  \mathbf{U}_r = I, \;\;\; \mathbf{V}_r = \mathbf{K}_0.
\end{aligned}
$$
If the plant is open-loop stable, we can choose $\mathbf{K}_0 = 0$.

Unlike the closed-loop parameterizations in Propositions~\ref{prop:slp1}--\ref{prop:slp4}, the Youla parameterization~\eqref{eq:youla} allows 
the parameter $\mathbf{Q}$ to be freely chosen in $\mathcal{RH}_{\infty}$ with no equality constraints. Indeed, any doubly-coprime factorization of the plant can be used to eliminate the affine constraints in Propositions~\ref{prop:slp1}--\ref{prop:slp4} exactly, as shown below.

\begin{proposition} \label{prop:clpyoula}
     Let $\mathbf{U}_r,\mathbf{V}_r,\mathbf{U}_l,\mathbf{V}_l,\mathbf{M}_r,\mathbf{M}_l,\mathbf{N}_r,\mathbf{N}_l$ be any  doubly-coprime factorization of $\mathbf{G}$. For any $\mathbf{Q} \in \mathcal{RH}_\infty$, the following transfer matrices
    \begin{equation} 
    \begin{aligned}
    \bm{\Phi}_{yy}&=(\mathbf{U}_r-\mathbf{N}_r\mathbf{Q})\mathbf{M}_l\,,\\
    \bm{\Phi}_{uy}&=(\mathbf{V}_r-\mathbf{M}_r\mathbf{Q})\mathbf{M}_l\,,\\
    \bm{\Phi}_{yu}&=(\mathbf{U}_r-\mathbf{N}_r\mathbf{Q})\mathbf{N}_l\,,\\
    \bm{\Phi}_{uu}&=I+(\mathbf{V}_r-\mathbf{M}_r\mathbf{Q})\mathbf{N}_l\,, \\
    \end{aligned}
    \end{equation}
    and $\bm{\Phi}_{xx} = (zI - A)^{-1}\! +
    (zI - A)^{-1}B\bm{\Phi}_{uy}C(zI - A)^{-1}, $

    \begin{equation}
    \begin{aligned}
            \bm{\Phi}_{ux} & = \bm{\Phi}_{uy}C(zI - A)^{-1},\\
            \bm{\Phi}_{xy} &= (zI - A)^{-1}B\bm{\Phi}_{uy}, \\
            \bm{\Phi}_{xu} &= (zI - A)^{-1}B\bm{\Phi}_{uu},\\
            \bm{\Phi}_{yx} &= \bm{\Phi}_{uu}C(zI - A)^{-1},
    \end{aligned}
    \end{equation}
satisfy the affine constraints~\eqref{eq:slp1_constraint},~\eqref{eq:iop},~\eqref{eq:slp3constraint},~\eqref{eq:slp4constraint}.
\end{proposition}
The proof is based on direct verification, which is omitted here; see~\cite{zheng2019equivalence} for further discussions on the equivalence of the Youla parameterization, the IOP, and the SLP. We note that a doubly-coprime factorization can be found in the state-space domain~\cite{nett1984connection}, and this pre-process might introduce  numerical issues that affect closed-loop stability, which is beyond the scope of this paper.

\section{Case studies}
\label{section:example}

In this section, we present a case study of optimal controller synthesis for \emph{open-loop stable} plants using Propositions~\ref{prop:slp1}-\ref{prop:slp4}. We show that the optimal controller synthesis problem can be cast into a quadratic program (QP) after imposing the FIR constraint.\footnote{Code is available at \url{https://github.com/soc-ucsd/h2_clp}.}

\subsection{Application to optimal controller synthesis}


Consider the optimal controller synthesis~\eqref{eq:OptimalControlv2}. Using a change of variables, as suggested in Propositions~\ref{prop:slp1}-\ref{prop:slp4}, it is equivalent to replace $\mathbf{K} \in \mathcal{C}_{\text{stab}}$ with the affine constraints~\eqref{eq:slp1_constraint},~\eqref{eq:iop},~\eqref{eq:slp3constraint}, or~\eqref{eq:slp4constraint}. It remains to reformulate the cost function in terms of these new variables. Simple algebra shows that
$$
\begin{aligned}
   (I-\mathbf{G}\mathbf{K})^{-1} &= \mathbf{\Phi}_{yy} = C\mathbf{\Phi}_{xy}+ I, \\
    \mathbf{K}(I-\mathbf{G}\mathbf{K})^{-1}  &= \mathbf{\Phi}_{uy}, \\
    (I-\mathbf{K}\mathbf{G})^{-1}  &= \mathbf{\Phi}_{uu} = \mathbf{\Phi}_{ux}B + I,
\end{aligned}
$$
and
$
    (I-\mathbf{G}\mathbf{K})^{-1}\mathbf{G} = \mathbf{\Phi}_{yu} = C\mathbf{\Phi}_{xx}B = C\mathbf{\Phi}_{xu} = \mathbf{\Phi}_{yx}B.
$

Therefore, problem~\eqref{eq:OptimalControlv2} is equivalent to any of the following convex optimization problems~\eqref{eq:OptimalControlsls}-\eqref{eq:OptimalControlty4}  corresponding to Propositions~\ref{prop:slp1}--\ref{prop:slp4}, respectively.
\begin{equation} \label{eq:OptimalControlsls}
    \begin{aligned}
        \min  \quad &  \left\|\begin{bmatrix} Q^{\frac{1}{2}} & 0 \\ 0& R^{\frac{1}{2}} \end{bmatrix} \begin{bmatrix} C\mathbf{\Phi}_{xy}+ I & C\mathbf{\Phi}_{xx}B\\ \Phi_{uy}  & \mathbf{\Phi}_{ux}B + I \end{bmatrix}\right\|^2_{\mathcal{H}_2} \\
        \text{s.t.} \quad 	 &
	 \mathbf{\Phi}_{xx},\mathbf{\Phi}_{xy},\mathbf{\Phi}_{ux},\mathbf{\Phi}_{uy}\;\; \text{satisfy}~\eqref{eq:slp1_constraint}.
    \end{aligned}
\end{equation}
\begin{equation} \label{eq:OptimalControliop}
    \begin{aligned}
        \min  \quad &  \left\|\begin{bmatrix} Q^{\frac{1}{2}} & 0 \\0 & R^{\frac{1}{2}} \end{bmatrix} \begin{bmatrix} \mathbf{\Phi}_{yy} & \mathbf{\Phi}_{yu}\\ \mathbf{\Phi}_{uy}  & \mathbf{\Phi}_{uu} \end{bmatrix}\right\|^2_{\mathcal{H}_2} \\
        \text{s.t.} \quad & \mathbf{\Phi}_{yy}, \mathbf{\Phi}_{yu}, \mathbf{\Phi}_{uy}, \mathbf{\Phi}_{uu}\;\; \text{satisfy}~\eqref{eq:iop}.
    \end{aligned}
\end{equation}
 \begin{equation} \label{eq:OptimalControlty3}
    \begin{aligned}
        \min  \quad &  \left\|\begin{bmatrix} Q^{\frac{1}{2}} & 0\\ 0& R^{\frac{1}{2}} \end{bmatrix} \begin{bmatrix} \mathbf{\Phi}_{yy} & \mathbf{\Phi}_{yx}B\\ \Phi_{uy}  & \mathbf{\Phi}_{ux}B + I \end{bmatrix}\right\|^2_{\mathcal{H}_2} \\
        \text{s.t.} \quad 	 &
	 \mathbf{\Phi}_{yy},\mathbf{\Phi}_{uy},\mathbf{\Phi}_{yx},\mathbf{\Phi}_{ux}\;\;\text{satisfy}~\eqref{eq:slp3constraint}.
    \end{aligned}
\end{equation}
 \begin{equation} \label{eq:OptimalControlty4}
    \begin{aligned}
        \min \quad &  \left\|\begin{bmatrix} Q^{\frac{1}{2}} & 0\\0 & R^{\frac{1}{2}} \end{bmatrix} \begin{bmatrix} C\mathbf{\Phi}_{xy}+I & C\mathbf{\Phi}_{xu}\\ \Phi_{uy}  & \mathbf{\Phi}_{uu} \end{bmatrix}\right\|^2_{\mathcal{H}_2} \\
        \text{s.t.} \quad 	 &
	 \mathbf{\Phi}_{xy},\mathbf{\Phi}_{uy},\mathbf{\Phi}_{xu},\mathbf{\Phi}_{uu}\;\;\text{satisfy}~\eqref{eq:slp4constraint}.
    \end{aligned}
\end{equation}

Note that the $\mathcal{H}_2$ norm of an FIR transfer matrix $\mathbf{H} = \sum_{k=1}^T \frac{1}{z^k}H_k$ admits the following expression
$
    \|\mathbf{H}\|^2_{\mathcal{H}_2} = \sum_{k=1}^T \text{Trace}(H_k^\tr H_k).
$
Thus, after imposing the FIR constraint on the decision variables, problems~\eqref{eq:OptimalControlsls}-\eqref{eq:OptimalControlty4} can be reformulated into QPs, for which very efficient solvers exist. 


\begin{figure}[t]
    \centering
    \includegraphics[width = 0.36\textwidth]{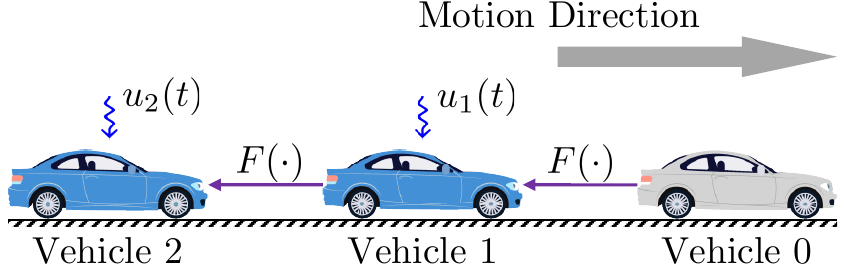}
    \caption{Each vehicle has a pre-existing car-following dynamics $F(\cdot)$ and the goal is to design an additional input $u_i(t), i = 1, 2$ to improve the car-following performance. }
    \label{fig:example}
\end{figure}

\subsection{Numerical experiments}

Here, we use a car-following control scenario~\cite{zheng2015stability} (see Figure~\ref{fig:example} for illustration) to demonstrate the numerical performance of the parameterizations in Propositions~\ref{prop:slp1}-\ref{prop:slp4}.

\textit{Modelling:} We denote the position and velocity of vehicle $i$ as $p_i$ and $v_i$. The spacing of vehicle $i$, \emph{i.e.}, its relative distance from vehicle $i-1$, is defined as $s_i = p_{i-1} - p_i, i=1, 2$. Without loss of generality, the vehicle length is ignored. It is assumed that the leading vehicle 0 runs at a constant velocity $v_0$. Each vehicle has pre-existing car-following dynamics, and we aim to design an additional control signal $u_i(t)$ to improve the car-following performance, \emph{i.e.},
\begin{equation} \label{Eq:HDVModel}
    \dot{v}_i(t) = F(s_i(t),\dot{s}_i(t),v_i(t)) + u_i(t),
\end{equation}
where $\dot{s}_i(t) = v_{i-1}(t) - v_i(t)$, and $F(\cdot)$ characterizes the driver's natural car-following behavior (see~\cite{orosz2010traffic} for details). In an equilibrium car-following state, each vehicle moves with the same equilibrium velocity, \emph{i.e.}, $v_i(t) = v_0, \dot{s}_i(t) = 0$, for $i = 1, 2$. Assuming that each vehicle has a small perturbation from the equilibrium state $(s_i^*,v^*)$, we define the error state between actual and equilibrium state of vehicle $i$ as
$
x_i(t)=\begin{bmatrix}\tilde{s}_i(t),\tilde{v}_i(t)\end{bmatrix}^\tr=\begin{bmatrix}s_i(t)-s_i^*,v_i(t)-v^*\end{bmatrix}^\tr.
$
Applying the first-order Taylor expansion to \eqref{Eq:HDVModel}, we can derive a linearized model for each vehicle ($i=1, 2$)
\begin{equation*}
\begin{cases}
\dot{\tilde{s}}_i(t)=\tilde{v}_{i-1}(t)-\tilde{v}_i(t),\\
\dot{\tilde{v}}_i(t)=\alpha_{1}\tilde{s}_i(t)-\alpha_{2}\tilde{v}_i(t)+\alpha_{3}\tilde{v}_{i-1}(t) + u_i(t),\\
\end{cases}
\end{equation*}
with $\alpha_{1} = \frac{\partial F}{\partial s_i}, \alpha_{2} = \frac{\partial F}{\partial \dot{s}_i} - \frac{\partial F}{\partial v_i}, \alpha_{3} = \frac{\partial F}{\partial \dot{s}_i}$ evaluated at the equilibrium state. Assuming that we can measure the relative spacing, we arrive at the following state-space model
\begin{equation} \label{eq:CarDynamics}
    \begin{aligned}
        \dot{x} &= \begin{bmatrix} P_1 & 0 \\
        P_2 & P_1\end{bmatrix}  x  + \begin{bmatrix}B_1 & 0 \\ 0 & B_1\end{bmatrix} \left( u + \delta_u\right), \\
        y &= \begin{bmatrix} C_1 & 0 \\ 0 & C_1 \end{bmatrix}x + \delta_y,
    \end{aligned}
\end{equation}
where  $x = \begin{bmatrix} x_1^\tr & x_2^\tr \end{bmatrix}^\tr, u = \begin{bmatrix} u_1 & u_2 \end{bmatrix}^\tr, y = \begin{bmatrix} \tilde{s}_1(t) & \tilde{s}_2(t) \end{bmatrix}^\tr$, $\delta_u$ and $\delta_y$ are control input noise and measurement noise, respectively, and
$$
    P_1 = \begin{bmatrix} 0 & -1\\ \alpha_1 & -\alpha_2 \end{bmatrix},     P_2 = \begin{bmatrix} 0 & 1\\ 0 & \alpha_3 \end{bmatrix}, B_1 = \begin{bmatrix}0 \\1 \end{bmatrix}, C_1 = \begin{bmatrix}1 & 0 \end{bmatrix}.
$$
The objective is to design $u_i(t)$ to regulate the spacing error $s_i(t)-s_i^*$ and velocity error $v_i(t)-v^*$ based on the output information $y(t)$. This problem can be formulated into~\eqref{eq:H2_s2} in the discrete-time domain.

\begin{figure}[t]
    \centering
     \includegraphics[width=0.34\textwidth]{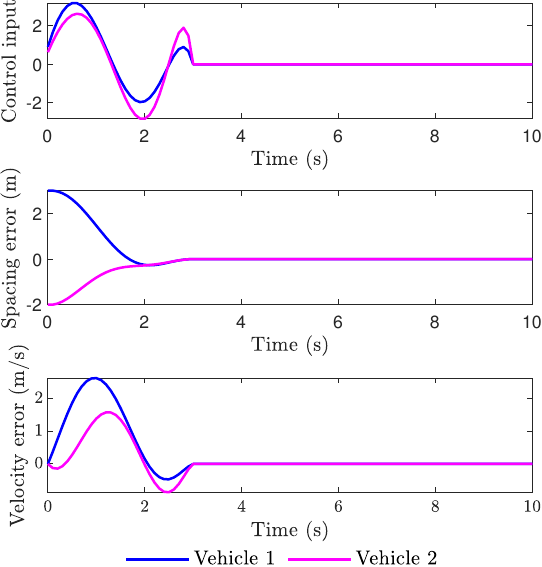}
    \caption{Responses using the controllers from~\eqref{eq:OptimalControlsls}-\eqref{eq:OptimalControlty4} with FIR length $T = 30$.}
    \label{fig:ExampleN30}
\end{figure}

\begin{table}[t]
\caption{$\mathcal{H}_2$ norm for different FIR lengths when solving the car-following problem.}\label{tab:h2norm}
{\small
	\centering
	\begin{tabular}{rrrrrrrr}
	\toprule
		FIR $T$ &10  &  15 &   20&    25 &   30    &50 &   75\\ 
		\midrule
		$\mathcal{H}_2$ norm  &  54.20 &   17.41&    7.56&    4.08&    2.49 &    2.03&    2.02 \\
	\bottomrule
	\end{tabular}
	}
	 \scriptsize
   \newline
   \raggedright
   $^\ddagger$: The $\mathcal{H}_2$ norms from~\eqref{eq:OptimalControlsls}-\eqref{eq:OptimalControlty4} have no difference up to four significant figures. \\
   $\,^\dagger$: The true $\mathcal{H}_2$ norm from \texttt{h2syn} in MATLAB is 2.02. \\
\end{table}

\textit{Numerical results:} In our numerical simulations, the car-following parameters $\alpha_1 = 0.94, \alpha_2 = 1.5, \alpha_3 = 0.9$ are chosen according to the setup in~\cite{zheng2020smoothing}, and the open-loop system is stable. Using a forward Euler-discretization of~\eqref{eq:CarDynamics} with a sampling time of $dT = 0.1$s, we formulate the corresponding optimal controller synthesis problem~\eqref{eq:H2_s2} in discrete-time with $Q = I$ and $R = I$. This can be solved via any of the convex problems~\eqref{eq:OptimalControlsls}-\eqref{eq:OptimalControlty4}. We varied the FIR length $T$ from $10$ to $75$, and the results are listed in Table I. {As expected, when increasing the FIR length, the optimal cost from~\eqref{eq:OptimalControlsls}-\eqref{eq:OptimalControlty4} converges to the true value returned by the standard synthesis \texttt{h2syn} in MATLAB.} Given an initial state $x_0 = [3, 0, -2, 0]^\tr$, Figure~\ref{fig:ExampleN30} shows the time-domain responses\footnote{The responses from~\eqref{eq:OptimalControlsls}-\eqref{eq:OptimalControlty4} have no visible difference.} of the closed-loop system using the resulting controllers from~\eqref{eq:OptimalControlsls}-\eqref{eq:OptimalControlty4} when the FIR length is $T = 30$. By design, the closed-loop system converges to the equilibrium state within 3 seconds.  
For the same initial state, Figure~\ref{fig:ExampleN75} shows the time-domain responses of the closed-loop system when the FIR length is $T = 75$, where the system converges the equilibrium state within 7.5 seconds with lower peak values during the transient process compared to the case $T = 30$.

\begin{figure}[t]
    \centering
     \includegraphics[width=0.34\textwidth]{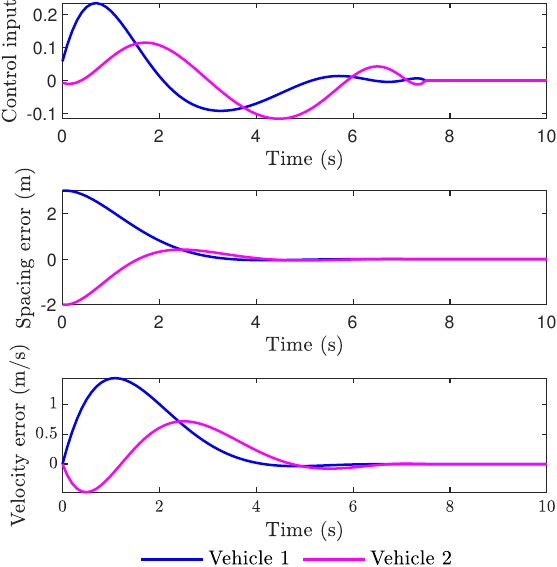}
    \caption{Responses using the controllers from~\eqref{eq:OptimalControlsls}-\eqref{eq:OptimalControlty4} with FIR length $T = 75$.}
    \label{fig:ExampleN75}
\end{figure}

\vspace{5mm}

\section{Conclusions}
\label{section:conclusion}

In this paper, we have characterized all possible parameterizations for the set of stabilizing controllers using closed-loop maps. We have revealed two other parallel choices beyond the recent notions of SLP~\cite{wang2019system} and IOP~\cite{furieri2019input}. In fact, our analysis allows to treat the SLP~\cite{wang2019system} and the IOP~\cite{furieri2019input} {in a unified way}. 
After imposing the FIR approximation, the ability of the four parameterizations for {encoding} $\mathcal{C}_{\text{stab}}$ becomes different, and the IOP enjoys the best approximation ability. These closed-loop parameterizations can avoid computing the doubly co-prime factorization of the plant \emph{a priori}, but instead require imposing a set of affine constraints for achievable closed-loop responses. We have discussed 
two numerically robust scenarios: the SLP in the state feedback case and the IOP for open-loop stable plants.

One future direction is to address decentralized control, \emph{e.g.}, the notion of quadratic invariance (QI)~\cite{rotkowitz2006characterization} and sparsity invariance (SI)~\cite{furieri2019sparsity}, using different parameterizations. Also, similar to SLP~\cite{dean2017sample,dean2018regret} and Youla~\cite{simchowitz2020improper, lale2020logarithmic,furieri2019learning}, it will be extremely interesting to investigate the features of different parameterizations in robust synthesis for uncertain systems and their applications in learning-based control. Finally, we have established that  closed-loop parameterizations are more subtle in practice for open-loop unstable plants with output feedback, and further investigation is needed to unravel a more precise and thorough understanding of related aspects.

\vspace{2mm}
{\small
\noindent\textbf{Acknowledgement:} The authors would like to thank Nikolai Matni, James Anderson and John C Doyle for several insightful discussions, particularly around the robustness of the SLS framework. The authors thank John C Doyle for his encouragement to find a simple SISO example, eventually leading to~\eqref{eq:SLPcounterex}. We also thank three anonymous reviewers and Associate Editor, whose suggestions improved the quality of this work.
}

\vspace{5mm}

\newpage
\appendix
\noindent\textbf{Appendix}

\section{Proof of Proposition~\ref{prop:slp3}} \label{App:proposition3}
\emph{Statement 1}: Given any $\mathbf{K} \in \mathcal{C}_{\text{stab}}$, it is easy to derive that the closed-loop responses~\eqref{eq:slp3} are
        $$
            \begin{aligned}
            \bm{\Phi}_{yx} &= (I - \mathbf{G}\mathbf{K})^{-1}C(zI - A)^{-1},\\ \bm{\Phi}_{yy} &= (I - \mathbf{G}\mathbf{K})^{-1},\\
            \bm{\Phi}_{ux} &=\mathbf{K}(I - \mathbf{G}\mathbf{K})^{-1}C(zI - A)^{-1}, \\ \bm{\Phi}_{uy} &= \mathbf{K}(I - \mathbf{G}\mathbf{K})^{-1},
            \end{aligned}
        $$
        which are all stable by definition. Then, it is not difficult to verify that
        $$
        \begin{aligned}
            \bm{\Phi}_{yx} - \mathbf{G}\bm{\Phi}_{ux}
          & 
            = C(zI - A)^{-1}, \\
            \bm{\Phi}_{yy} - \mathbf{G}\bm{\Phi}_{uy}
            &= (I - \mathbf{G}\mathbf{K})^{-1}-  \mathbf{G}\mathbf{K}(I - \mathbf{G}\mathbf{K})^{-1} = I,
        \end{aligned}
        $$
         and that
        $$
            \begin{aligned}
            \bm{\Phi}_{yx}(zI - A) - \bm{\Phi}_{yy}C &= 0,  \\
            \bm{\Phi}_{ux}(zI - A) - \bm{\Phi}_{uy}C &= 0.
            \end{aligned}
        $$
        Therefore, the closed-loop responses $\bm{\Phi}_{yx}, \bm{\Phi}_{yy},
            \bm{\Phi}_{ux} , \bm{\Phi}_{uy}$ satisfy~\eqref{eq:slp3constraint}.

        \emph{Statement 2}: Consider any $ \bm{\Phi}_{yx}, \bm{\Phi}_{yy},
            \bm{\Phi}_{ux} , \bm{\Phi}_{uy}$ satisfying~\eqref{eq:slp3constraint}. Since $\bm{\Phi}_{yy} = I + \mathbf{G}\bm{\Phi}_{uy}$ and $\mathbf{G}$ is strictly proper, we know that $\bm{\Phi}_{yy}$ is always invertible. Let $\mathbf{K} = \bm{\Phi}_{uy}\bm{\Phi}_{yy}^{-1}$.
            We now verify the resulting closed-loop responses in~\eqref{eq:slp3} are all stable. In particular, we have
             $$
             \mathbf{y} = (I - \mathbf{G}\mathbf{K})^{-1}C(zI - A)^{-1} \bm{\delta}_x,
             $$
             and with $\mathbf{K} = \bm{\Phi}_{uy}\bm{\Phi}_{yy}^{-1}$, we have
             $$
        \begin{aligned}
            &(I - \mathbf{G}\bm{\Phi}_{uy}\bm{\Phi}_{yy}^{-1})^{-1}C(zI - A)^{-1} \\
            =& \bm{\Phi}_{yy}(\bm{\Phi}_{yy} - \mathbf{G}\bm{\Phi}_{uy})^{-1}C(zI - A)^{-1} \\
            = &\bm{\Phi}_{yy}C(zI - A)^{-1} \\
            = &\bm{\Phi}_{yx} \in \mathcal{RH}_{\infty}, \\
            \end{aligned}
            $$
            where the 
            equalities follow from the fact that $ \bm{\Phi}_{yx}, \bm{\Phi}_{yy},
            \bm{\Phi}_{ux}$, $\bm{\Phi}_{uy}$ satisfy~\eqref{eq:slp3constraint}.
            Also, we have that
            $$
                \mathbf{y} = (I - \mathbf{G}\bm{\Phi}_{uy}\bm{\Phi}_{yy}^{-1})^{-1}\bm{\delta}_y = \bm{\Phi}_{yy} \bm{\delta}_y.
            $$
            Similarly, we can show that
        $$
        \begin{aligned}
          \mathbf{u} &= \mathbf{K}(I - \mathbf{P}_{22}\mathbf{K})^{-1}C_2(zI - A)^{-1} \bm{\delta}_x =  \bm{\Phi}_{ux} \bm{\delta}_x,
          \\
            \mathbf{u}&=\mathbf{K}(I - \mathbf{P}_{22}\mathbf{K})^{-1}\bm{\delta}_y =  \bm{\Phi}_{uy}\bm{\delta}_y.
        \end{aligned}
        $$
        Therefore, we have proved that
        $
            \left(\begin{bmatrix}
            \bm{\delta}_x \\
            \bm{\delta}_y  \\
        \end{bmatrix} \rightarrow \begin{bmatrix}
            \mathbf{y} \\
            \mathbf{u} \\
        \end{bmatrix} \right)\in \mathcal{RH}_{\infty},
        $
        using the controller $\mathbf{K} = \bm{\Phi}_{uy}\bm{\Phi}_{yy}^{-1}$.
        By Theorem~\ref{Theo:mainresult}, we know $\mathbf{K} = \bm{\Phi}_{uy}\bm{\Phi}_{yy}^{-1} \in \mathcal{C}_{\text{stab}}.$

\section{Proof of Theorem~\ref{theo:FIR}} \label{App:theorem2}
    The direction $(i) \Rightarrow (ii), (iii), (iv), (v)$ is true by definition. For any controller $\mathbf{K}$, the closed-loop responses are given in~\eqref{eq:closedloop} and~\eqref{eq:responses}.

    We now prove $(ii) \Rightarrow (i)$. Suppose we have
$$
    \left(\begin{bmatrix}
            \bm{\delta}_x \\
            \bm{\delta}_y  \\
        \end{bmatrix} \rightarrow \begin{bmatrix}
            \mathbf{x} \\
            \mathbf{u} \\
        \end{bmatrix}\right) = \begin{bmatrix} \bm{\Phi}_{xx} & \bm{\Phi}_{xy}\\
        \bm{\Phi}_{ux} & \bm{\Phi}_{uy}\end{bmatrix} \in \mathcal{F}_T.
$$
From~\eqref{eq:responses}, it is not difficult to check $\bm{\Phi}_{xu} = \bm{\Phi}_{xx}B \in \mathcal{F}_T$, and
$$
\begin{aligned}
    \bm{\Phi}_{yx} &= C\bm{\Phi}_{xx}\in \mathcal{F}_T, &
    \bm{\Phi}_{yy} &= C\bm{\Phi}_{xy}+ I \in \mathcal{F}_T \\
    \bm{\Phi}_{yu} &= C\bm{\Phi}_{xx}B \in \mathcal{F}_T, &
    \bm{\Phi}_{uu} &= \bm{\Phi}_{ux}B + I \in \mathcal{F}_T. \\
\end{aligned}
$$

\noindent This means that the statement (i) is true. Similar arguments can prove $(iii) \Rightarrow (v)$ and $(iv) \Rightarrow (v)$.

        Finally, if $(A, B, C)$ and $(A_k, B_k, C_k)$ are both controllable and observable, we prove that $(v) \Rightarrow (i)$. According to~\eqref{eq:clstatespace}, we have the following state-space realization
        \begin{equation} \label{eq:clyu2yu}
            \left(\begin{bmatrix}
            \bm{\delta}_y \\
            \bm{\delta}_u  \\
        \end{bmatrix} \rightarrow \begin{bmatrix}
            \mathbf{y} \\
            \mathbf{u} \\
        \end{bmatrix}\right) =  \hat{C}_1 (zI - A_{\text{cl}})^{-1}\hat{B}_2  + \begin{bmatrix}
    I & 0 \\
    D_k & I
    \end{bmatrix},
        \end{equation}
    where $\hat{C}_1$ and $\hat{B}_2$ are defined in~\eqref{eq:clmatrices}.
    We can show that
    $
    (A_{\text{cl}},\hat{B}_2,\hat{C}_1)
    $
    is controllable and observable (see Appendix~\ref{app:controllability}). This means that the eigenvalues of $A_{\text{cl}}$ are the same as the poles of the transfer matrices~\cite[Chapter 3]{zhou1996robust}. Therefore, if the statement $(v)$ 
    is true, then the closed-loop matrix  $A_{\text{cl}}$ only has zero eigenvalues and no eigenvalues of $A_{\text{cl}}$ is hidden from the input-output behavior. 
    This completes the proof.

\section{Proof of stabilizability of~\eqref{eq:statecontrollable}}\label{app:statefeedback}
Consider a feedback gain
$
K = \begin{bmatrix} K_1 & K_2 \end{bmatrix},
$
where $K_1 \in \mathbb{R}^{n \times n},K_2 \in \mathbb{R}^{n \times p}$.
We have
$$
\begin{aligned}
    &\begin{bmatrix}A+BD_k & BC_k \\ B_k & A_k\end{bmatrix} +  \begin{bmatrix}I \\0 \end{bmatrix}\begin{bmatrix} K_1 & K_2 \end{bmatrix} \\
    = &\begin{bmatrix}A+BD_k + K_1 & BC_k + K_2 \\ B_k & A_k\end{bmatrix}.
\end{aligned}
$$
Since $(A_k,B_k)$ is stabilizable, there exists $F_k\in \mathbb{R}^{n \times q}$ such that $A_k + B_kF_k$ is stable.
By choosing
$$
    \begin{aligned}
        K_1 &= - A - BD_k -  I + F_kB_k\\
        K_2 & = -BC_k - (A+BD_k + K_1)F_k + F_k(A_k+B_kF_k),
    \end{aligned}
$$
it can be easily verify that
$$
\begin{aligned}
      &\begin{bmatrix}I & F_k \\ 0 & I\end{bmatrix}^{-1}\begin{bmatrix}A+BD_k + K_1 & BC_k + K_2 \\ B_k & A_k\end{bmatrix}  \begin{bmatrix}I & F_k \\ 0 & I\end{bmatrix} \\
      &=  \begin{bmatrix}-I & 0 \\ B_k & A_k+B_kF_k\end{bmatrix}.
      \end{aligned}
$$
is stable. Since the similarity transformation does not change eigenvalues,
there exist $K_1, K_2$ such that
$$
      \begin{bmatrix}A+BD_k + K_1 & BC_k + K_2 \\ B_k & A_k\end{bmatrix}
$$
is stable. This completes the proof. \hfill \qed

\section{Proof of Proposition~\ref{prop:slp4}} \label{app:proof_p4}
\emph{Statement 1}: Given $\mathbf{K} \in \mathcal{C}_{\text{stab}}$, it  is  easy  to derive that the closed-loop responses~\eqref{eq:slp4} are
\begin{equation*} 
    \begin{aligned}
       \bm{\Phi}_{xy} &= \bm{\Phi}_{xx}B\mathbf{K},  &\bm{\Phi}_{xu} &= \bm{\Phi}_{xx}B,  \\
       \bm{\Phi}_{uy} &= \mathbf{K}(C\bm{\Phi}_{xx}B\mathbf{K} + I),  &\bm{\Phi}_{uu} &= \mathbf{K}C\bm{\Phi}_{xx}B + I,  \\
    \end{aligned}
\end{equation*}
 where  $\bm{\Phi}_{xx} = (zI - A - B\mathbf{K}C)^{-1} \in \mathcal{RH}_{\infty}$. They are all stable by definition. Then, it is not difficult to verify that
 $$
    \begin{aligned}
        &(zI - A) \bm{\Phi}_{xy} - B \bm{\Phi}_{uy} \\
        =\; & (zI - A)\bm{\Phi}_{xx}B\mathbf{K} - B\mathbf{K}(C\bm{\Phi}_{xx}B\mathbf{K} + I) \\
        =\;&((zI - A)\bm{\Phi}_{xx} - B\mathbf{K}C\bm{\Phi}_{xx} - I)B\mathbf{K}
         = \;0,
         \end{aligned}
         $$
         and
         $$
         \begin{aligned}
        &(zI - A) \bm{\Phi}_{xu} - B \bm{\Phi}_{uu} \\
        = \; &(zI - A)\bm{\Phi}_{xx}B - B(C\bm{\Phi}_{xx}B\mathbf{K} + I) \\
        =\;&((zI - A)\bm{\Phi}_{xx} - B\mathbf{K}C\bm{\Phi}_{xx} - I)B
         =\;  0,
    \end{aligned}
 $$
and that
 $$
    \begin{aligned}
       -\bm{\Phi}_{xy}\mathbf{G} + \bm{\Phi}_{xu}
        &=(zI - A)^{-1}B,\\
        -\bm{\Phi}_{uy}\mathbf{G} + \bm{\Phi}_{uu}
        &= I.
    \end{aligned}
 $$
Therefore, the closed-loop responses $\bm{\Phi}_{yx}, \bm{\Phi}_{uy},
            \bm{\Phi}_{xu}, \bm{\Phi}_{uu}$ satisfy~\eqref{eq:slp4constraint}.

 \emph{Statement 2}: Consider any $ \bm{\Phi}_{xy}, \bm{\Phi}_{uy},
            \bm{\Phi}_{xu} , \bm{\Phi}_{uu}$ satisfying~\eqref{eq:slp4constraint}. Since $\bm{\Phi}_{uu} = I + \bm{\Phi}_{uy}\mathbf{G}$, $\bm{\Phi}_{uu}$ is always invertible. Let $\mathbf{K} = \bm{\Phi}_{uu}^{-1}\bm{\Phi}_{uy}$.
            We now verify that the resulting closed-loop responses~\eqref{eq:slp4} are all stable. In particular, we have
            $$
                \mathbf{x} = (zI - A - B\mathbf{K}C)^{-1}B \bm{\delta}_u,
            $$
            and with the controller $\mathbf{K} = \bm{\Phi}_{uu}^{-1}\bm{\Phi}_{uy}$, we have
            $$
                \begin{aligned}
                     &(zI - A - B\bm{\Phi}_{uu}^{-1}\bm{\Phi}_{uy}C)^{-1}B   \\
                     =&(zI -A)^{-1}(I - B\bm{\Phi}_{uu}^{-1}\bm{\Phi}_{uy}C(zI -A)^{-1})^{-1}B  \\
                     =&(zI -A)^{-1}B(I - \bm{\Phi}_{uu}^{-1}\bm{\Phi}_{uy}\mathbf{G})^{-1} \\
                     =&(zI -A)^{-1}B(\bm{\Phi}_{uu} - \bm{\Phi}_{uy}\mathbf{G})^{-1}\bm{\Phi}_{uu} \\
                     =&\bm{\Phi}_{xu} \in \mathcal{RH}_{\infty},
                \end{aligned}
            $$
            where the last equality follows from the fact that $ \bm{\Phi}_{xy}, \bm{\Phi}_{uy},
            \bm{\Phi}_{xu} , \bm{\Phi}_{uu}$ satisfy~\eqref{eq:slp4constraint}.
    Also, it is not difficult to derive that
 $$
 \begin{aligned}
    \mathbf{x} &= (zI - A - B\mathbf{K}C)^{-1}B\mathbf{K}\bm{\delta}_y = \bm{\Phi}_{xy}\bm{\delta}_y, \\
    \mathbf{u} &= \mathbf{K}(C\bm{\Phi}_{xx}B\mathbf{K} + I)\bm{\delta}_y = \bm{\Phi}_{uy}\bm{\delta}_y, \\
    \mathbf{u} &= (\mathbf{K}C(zI - A - B\mathbf{K}C)^{-1}B +I)\bm{\delta}_u = \bm{\Phi}_{uu}\bm{\delta}_u. \\
 \end{aligned}
 $$
 Thus, we have proved that
        $$
            \left(\begin{bmatrix}
            \bm{\delta}_y \\
            \bm{\delta}_u        \end{bmatrix} \rightarrow \begin{bmatrix}
            \mathbf{x} \\
            \mathbf{u} \\
        \end{bmatrix}\right) \in \mathcal{RH}_{\infty}.
        $$
        By Theorem~\ref{Theo:mainresult}, we {conclude that} $\mathbf{K} =\bm{\Phi}_{uu}^{-1}  \bm{\Phi}_{uy}\in \mathcal{C}_{\text{stab}}$.
\qed

\section{Controllability and observability of~\eqref{eq:clyu2yu}} \label{app:controllability}

\begin{lemma}[\cite{zhou1996robust}]
    The following statements are equivalent:
    \begin{enumerate}
        \item $(A,B)$ is controllable;
        \item $(A+BF,B)$ is controllable for any compatible matrix $F$;
        \item $[A-\lambda I, B]$ has full row rank,  $\forall \lambda \in \mathbb{C}$.
    \end{enumerate}
\end{lemma}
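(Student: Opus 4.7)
The plan is to prove the chain $(1) \Leftrightarrow (3)$ via a Popov-Belevitch-Hautus style argument, and then handle $(1) \Leftrightarrow (2)$ by showing that the reachable subspace is invariant under state feedback.

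For $(3) \Rightarrow (1)$ I would argue by contrapositive. If $(A,B)$ is not controllable, the reachable subspace $\mathcal{R} = \mathrm{Im}\,[B,\,AB,\,\ldots,\,A^{n-1}B]$ is a proper $A$-invariant subspace containing $\mathrm{Im}\,B$. Choosing a basis adapted to $\mathcal{R}$ brings the pair to the block-triangular form
\[
 \tilde A = \begin{bmatrix} A_{11} & A_{12} \\ 0 & A_{22} \end{bmatrix}, \qquad \tilde B = \begin{bmatrix} B_1 \\ 0 \end{bmatrix},
\]
and if $w^*$ is any left eigenvector of $A_{22}$ with eigenvalue $\lambda$, then the row vector $v^* = [0,\,w^*]$ is nonzero and satisfies $v^*(\tilde A - \lambda I) = 0$ and $v^* \tilde B = 0$, so $[A-\lambda I,\,B]$ drops row rank. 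Conversely for $(1) \Rightarrow (3)$: if $v^*(A-\lambda I) = 0$ and $v^* B = 0$ for some nonzero $v^*$, then $v^* A^k B = \lambda^k v^* B = 0$ inductively, so $v^*$ annihilates the Kalman controllability matrix, contradicting controllability.

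For $(1) \Leftrightarrow (2)$ the key observation is the Krylov identity
\[
 \mathrm{span}\{B,(A+BF)B,\ldots,(A+BF)^{n-1}B\} = \mathrm{span}\{B, AB,\ldots, A^{n-1}B\},
\]
which I would establish by induction on $k$: writing $(A+BF)^{k}B = A(A+BF)^{k-1}B + BF(A+BF)^{k-1}B$, the first term lies in $A\cdot\mathrm{span}\{A^j B\}_{j<k} \subseteq \mathrm{span}\{A^j B\}_{j\leq k}$ by the inductive hypothesis, and the second term lies in $\mathrm{Im}\,B$; the reverse inclusion follows symmetrically by viewing $A = (A+BF) - BF$. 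Since both reachability matrices have the same column span, they have the same rank, giving the equivalence.

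The only minor obstacle is the bookkeeping in the block-triangular reduction and the induction for feedback invariance, but both are standard linear algebra. The alternative route $(2) \Leftrightarrow (3)$ can actually be read off directly from the rank identity
\[
 [A+BF-\lambda I,\; B] = [A-\lambda I,\; B]\begin{bmatrix} I & 0 \\ F & I \end{bmatrix},
\]
which shows that the right-hand matrix has full row rank if and only if $[A-\lambda I,\,B]$ does, giving a clean one-line proof of $(2) \Leftrightarrow (3)$ once $(1) \Leftrightarrow (3)$ is in hand, and bypassing the Krylov computation entirely.
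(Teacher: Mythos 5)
Your proposal is correct. Note, however, that the paper itself offers no proof of this lemma: it is quoted verbatim from \cite{zhou1996robust} as a known fact and used in Appendix~C only to establish controllability of $(A_{\text{cl}},\hat{B}_2)$, so there is nothing in the paper to compare against line by line. Your argument is the standard one: the PBH eigenvector test for $(1)\Leftrightarrow(3)$ (left eigenvector annihilating the Kalman matrix in one direction, the block-triangular uncontrollable decomposition in the other), and feedback invariance of the reachable subspace for $(1)\Leftrightarrow(2)$. The only detail worth making explicit in the contrapositive step is that the rank of $\begin{bmatrix} A-\lambda I & B\end{bmatrix}$ is preserved under the change of basis, via
\[
\begin{bmatrix} TAT^{-1}-\lambda I & TB\end{bmatrix}
= T\begin{bmatrix} A-\lambda I & B\end{bmatrix}
\begin{bmatrix} T^{-1} & 0\\ 0 & I\end{bmatrix},
\]
but that is routine. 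Your closing observation is the best part: the factorization
$\begin{bmatrix} A+BF-\lambda I & B\end{bmatrix}=\begin{bmatrix} A-\lambda I & B\end{bmatrix}\begin{bmatrix} I & 0\\ F & I\end{bmatrix}$
gives $(2)\Leftrightarrow(3)$ in one line and makes the Krylov induction redundant; it is also exactly the mechanism the paper implicitly relies on in Appendix~C when it subtracts a state feedback from $A_{\text{cl}}$ before checking the rank condition, so leading with it would align your proof most closely with how the lemma is actually deployed.
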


We are ready to prove the controllability of $(A_{\text{cl}}, \hat{B}_2)$. First, controllability is invariant under state feedback. We consider
$$
    \begin{bmatrix} A + BD_kC & BC_k \\ B_kC & A_k \end{bmatrix} - \begin{bmatrix} BD_k & B  \\
    B_k & 0
    \end{bmatrix}\begin{bmatrix} C & 0  \\
                     0 & 0
                     \end{bmatrix} =  \begin{bmatrix} A  & BC_k \\ 0 & A_k \end{bmatrix}.
$$
Since $(A, B)$ and $(A_k, B_k)$ are controllable, we have $\text{rank}(\lambda I - A, B) = n$ and $\text{rank}(\lambda I - A_k, B_k) = n_k, \forall \lambda \in \mathbb{C}$. Thus,
$$
    \text{rank}\left(\begin{bmatrix} \lambda I - A  & -BC_k & BD_k & B\\ 0 & \lambda I - A_k & B_k & 0 \end{bmatrix}\right) = n+ n_k, \forall \lambda \in \mathbb{C},
$$
indicating that $(A_{\text{cl}},\hat{B}_2)$ is controllable. The observability of $(A_{\text{cl}},\hat{C}_1)$ can be proved in a similar way.

\section{State space realizations}
\label{app:statespace}

In this section, we use the following system operations very often (see~\cite[Chapter 3.6]{zhou1996robust}). Consider two dynamic systems $$
     \mathbf{G}_i = \left[\begin{array}{c|c} A_i & B_i \\ \hline
    C_i & D_i\end{array}\right], \quad i = 1, 2.
$$
Their inverses are given by
$$
    \mathbf{G}_i^{-1} = \left[\begin{array}{c|c} A_i - B_iD_i^{-1}C_i & -B_iD_i^{-1} \\ \hline
    D_i^{-1}C_i & D_i^{-1}\end{array}\right],\
$$
where we assume $D_i$ is invertible. If the system is strictly proper, then the inverse will be non-proper and there is no state-space realization. The cascade connection of two systems such that $\mathbf{y} = \mathbf{G}_1\mathbf{G}_2\mathbf{u}$ has a realization
\begin{equation} \label{eq:product}
    \mathbf{G}_1\mathbf{G}_2 = \left[\begin{array}{c c|c} A_1 & B_1C_2 & B_1D_2 \\
    0 & A_2 & B_2 \\ \hline
    C_1 & D_1C_2 & D_1D_2 \end{array}\right]
\end{equation}
and a parallel connection $\mathbf{y} = (\mathbf{G}_1 - \mathbf{G}_2)\mathbf{u}$ has a realization
\begin{equation} \label{eq:parallel}
    \mathbf{G}_1 - \mathbf{G}_2 = \left[\begin{array}{c c|c} A_1 & 0 & B_1 \\
    0 & A_2 & B_2 \\ \hline
    C_1 & -C_2 & D_1-D_2 \end{array}\right].
\end{equation}
We note that~\eqref{eq:product} and~\eqref{eq:parallel} are in general not  minimal. 
In addition, we use the following fact: for any invertible matrix $T$ with proper dimension,
\begin{equation} \label{eq:Transformation}
     \mathbf{G}_i = \left[\begin{array}{c|c} A_i & B_i \\ \hline
    C_i & D_i\end{array}\right] =  \left[\begin{array}{c|c} TA_iT^{-1} & TB_i \\ \hline
    C_iT^{-1} & D_i\end{array}\right].
\end{equation}

\subsection*{State space realization of the SLP controller}

    For the SLP controller $\mathbf{K} =  \bm{\Phi}_{uy} - \bm{\Phi}_{ux}\bm{\Phi}_{xx}^{-1}\bm{\Phi}_{xy}$ in Proposition~\ref{prop:slp1}, we assume the system responses $\bm{\Phi}_{uy}, \bm{\Phi}_{ux}, \bm{\Phi}_{xx}, \bm{\Phi}_{xy}$ are FIR transfer matrices of horizon $T$, \emph{i.e.},
    \begin{equation} \label{eq:FIRslp}
    \begin{aligned}
         \bm{\Phi}_{ux} &\!=\! \sum_{t=0}^T M_t \frac{1}{z^t} \in \mathcal{RH}_{\infty}, \bm{\Phi}_{xx} = \sum_{t=0}^T R_t \frac{1}{z^t} \in \mathcal{RH}_{\infty},  \\ \bm{\Phi}_{xy} &\!=\! \sum_{t=0}^T N_t \frac{1}{z^t} \in \mathcal{RH}_{\infty}.
    \end{aligned}
    \end{equation}
Upon defining the following matrices
\begin{equation} \label{eq:hatM}
\begin{aligned}
  \hat{M} &= \begin{bmatrix} M_2 & M_3 & \ldots & M_T\end{bmatrix} \in  \mathbb{R}^{m \times n\hat{T}}, \\
                        \hat{R} &= \begin{bmatrix} R_2 & R_3 & \ldots & R_T\end{bmatrix} \in  \mathbb{R}^{n \times n\hat{T}}, \\
 \hat{N} &= \begin{bmatrix} N_1 & N_2 & \ldots & N_T\end{bmatrix} \in  \mathbb{R}^{n \times pT},
\end{aligned}
\end{equation}
and
$Z_n \in \mathbb{R}^{n\hat{T} \times n \hat{T}}$ as the down shift operator with sub-diagonal containing identity matrices of dimension $n \times n$, and $
    \mathcal{I}_n = [ I_n, 0 , \ldots,  0 ]^\tr \in \mathbb{R}^{n \hat{T} \times n},
$
with $\hat{T} = T - 1$, we have the following result.
\begin{theorem} \label{Theo:SLPstatespace}
Suppose that 
$\bm{\Phi}_{uy}, \bm{\Phi}_{ux}$, $\bm{\Phi}_{xx}, \bm{\Phi}_{xy}$ are FIR transfer matrices with horizon $T$ in~\eqref{eq:FIRiop} and~\eqref{eq:FIRslp}. A state-space realization for the output feedback controller $\mathbf{K} = \bm{\Phi}_{uy} - \bm{\Phi}_{ux}\bm{\Phi}_{xx}^{-1}\bm{\Phi}_{xy}$  is given by 
    \begin{equation} \label{eq:SLScontroller1}
\begin{aligned}
    \mathbf{K} =  \left[\begin{array}{c c |c} Z_n - \mathcal{I}_n\hat{R} & -\mathcal{I}_n\hat{N} & 0  \\  0 & Z_p &  \mathcal{I}_p\\ \hline
 \hat{M}-M_1\hat{R} & \hat{U}-M_1\hat{N} & U_0\end{array}\right].
\end{aligned}
\end{equation}
\end{theorem}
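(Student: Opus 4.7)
The plan is to show by direct computation that $C_k(zI - A_k)^{-1}B_k + D_k$ with the matrices in~\eqref{eq:SLScontroller1} collapses to $\bm{\Phi}_{uy} - \bm{\Phi}_{ux}\bm{\Phi}_{xx}^{-1}\bm{\Phi}_{xy}$. The controller state is partitioned into two tapped delay lines, $\bm{\gamma} \in \mathbb{R}^{pT}$ for the measurement $\mathbf{y}$ (exactly as in the proof of Theorem~\ref{Theo:IOPstatespace}) and $\bm{\beta} \in \mathbb{R}^{n\hat{T}}$ for an auxiliary signal, coupled through the upper off-diagonal block $-\mathcal{I}_n\hat{N}$ of $A_k$; the direct feedthrough is $D_k = U_0$.

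First, I would extract the algebraic constraints forced by the affine subspace~\eqref{eq:slp1_constraint}. Matching powers of $z$ in $(zI - A)\bm{\Phi}_{xx} - B\bm{\Phi}_{ux} = I$ and $(zI - A)\bm{\Phi}_{xy} - B\bm{\Phi}_{uy} = 0$, using the strict properness of $\mathbf{G}$ together with $\bm{\Phi}_{xx},\bm{\Phi}_{xy}\in\mathcal{RH}_{\infty}$, yields $R_0 = N_0 = 0$, $R_1 = I + BM_0$, and $N_1 = BU_0$. Under the standard SLS convention that the controller is strictly proper in the state-to-input direction ($M_0 = 0$, hence $R_1 = I$), these identities explain the indexing of $\hat{R},\hat{M}$ from $R_2,M_2$ and the reduced dimension $n\hat{T} = n(T-1)$.

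Second, the sub-diagonal shift structure of $Z_p$ and $Z_n$ yields elementary delay-line identities
\begin{equation*}
\hat{N}(zI - Z_p)^{-1}\mathcal{I}_p = \bm{\Phi}_{xy}, \qquad \hat{U}(zI - Z_p)^{-1}\mathcal{I}_p = \bm{\Phi}_{uy} - U_0,
\end{equation*}
and reduces the $\bm{\beta}$-pencil $(zI - Z_n + \mathcal{I}_n\hat{R})^{-1}\mathcal{I}_n$ to an $n\times n$ block equation $(zI + \psi(z))v_1 = I$ with $\psi(z) := R_2 + R_3/z + \cdots + R_T/z^{T-2}$. Invoking $R_1 = I$ collapses $zI + \psi(z) = z^2\bm{\Phi}_{xx}$, so that $v_1 = z^{-2}\bm{\Phi}_{xx}^{-1}$. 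Analogous algebra with $\hat{M}$ gives $(\hat{M} - M_1\hat{R})\,v = (\mu(z) - M_1\psi(z))v_1$ where $\mu(z) := M_2 + M_3/z + \cdots + M_T/z^{T-2}$.

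Combining the two blocks through the cross term $-\mathcal{I}_n\hat{N}$ and the delay-line identities above, the overall transfer matrix reduces to
\begin{equation*}
C_k(zI - A_k)^{-1}B_k + D_k = \bm{\Phi}_{uy} - (\mu(z) + M_1 z)(z^2\bm{\Phi}_{xx})^{-1}\bm{\Phi}_{xy},
\end{equation*}
and one checks $\mu(z) + M_1 z = z^2\bm{\Phi}_{ux}$ (using $M_0 = 0$), yielding $\mathbf{K} = \bm{\Phi}_{uy} - \bm{\Phi}_{ux}\bm{\Phi}_{xx}^{-1}\bm{\Phi}_{xy}$. The main obstacle is the careful bookkeeping of the first-tap corrections $-M_1\hat{R}$ and $-M_1\hat{N}$ in $C_k$: these absorb exactly the $M_1$-residue left when the $\bm{\beta}$-shift register is inverted, and without them the realization would produce a spurious constant tap absent from $\bm{\Phi}_{ux}\bm{\Phi}_{xx}^{-1}\bm{\Phi}_{xy}$. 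Once these cancellations are tracked, the claimed triple $(A_k,B_k,C_k,D_k)$ in~\eqref{eq:SLScontroller1} is verified.
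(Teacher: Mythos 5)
Your proof is correct, but it takes a genuinely different route from the paper's. The paper builds the realization compositionally: it writes $\bm{\Phi}_{ux}\bm{\Phi}_{xx}^{-1}=(z\bm{\Phi}_{ux})(z\bm{\Phi}_{xx})^{-1}$ to sidestep the non-properness of $\bm{\Phi}_{xx}^{-1}$, applies the standard state-space formulas for inversion, cascade and parallel connection, and then strips the resulting non-minimal (uncontrollable/unobservable) modes via a similarity transformation --- exactly mirroring its proof of Theorem~\ref{Theo:IOPstatespace}. You instead verify the claimed $(A_k,B_k,C_k,D_k)$ by solving the block-triangular resolvent directly: the delay-line identities for the $\bm{\gamma}$-register, the reduction of the $\bm{\beta}$-pencil to $(zI+\psi(z))v_1=I$ with $zI+\psi(z)=z^2\bm{\Phi}_{xx}$, and the cancellation $M_1 z^2\bm{\Phi}_{xx}-M_1\psi(z)=M_1 z$ combine to give $\bm{\Phi}_{uy}-(M_1z+\mu(z))(z^2\bm{\Phi}_{xx})^{-1}\bm{\Phi}_{xy}=\bm{\Phi}_{uy}-\bm{\Phi}_{ux}\bm{\Phi}_{xx}^{-1}\bm{\Phi}_{xy}$, which checks out. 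Your route is more elementary and self-contained --- it avoids the cascade/inverse realization formulas and the final mode-elimination step --- and it has the merit of making explicit the facts $R_0=N_0=M_0=0$ and $R_1=I$ forced by the affine subspace~\eqref{eq:slp1_constraint} (and by strict properness of $\bm{\Phi}_{xx}$), which the paper uses only implicitly: they are precisely what make $z\bm{\Phi}_{xx}$ biproper with unit feedthrough and justify indexing $\hat{R},\hat{M}$ from the second spectral component. What the paper's compositional route buys is modularity and a uniform treatment with Theorem~\ref{Theo:IOPstatespace}, at the cost of the extra bookkeeping needed to remove the spurious modes introduced by the product and difference formulas.
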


\subsection*{Proof of Theorem~\ref{Theo:IOPstatespace}}

Considering the FIR transfer matrices $\bm{\Phi}_{uy}$ and $\bm{\Phi}_{yy}$ in~\eqref{eq:FIRiop}. By the affine constraint~\eqref{eq:iop}, we always have $Y_0 = I_p$. The following state-space realizations of $\bm{\Phi}_{uy}$ and $\bm{\Phi}_{yy}$ are
   $$
    \bm{\Phi}_{uy} = \left[\begin{array}{c|c} Z_p & \mathcal{I}_p \\ \hline
    \hat{U} & U_0\end{array}\right], \quad   \bm{\Phi}_{yy} = \left[\begin{array}{c|c} Z_p & \mathcal{I}_p \\ \hline
    \hat{Y} & I_p\end{array}\right],
$$
with $\hat{U}$ and $\hat{Y}$ defined in~\eqref{eq:FIRiop}, and $Z_p$ and $\mathcal{I}_p$ defined as 
\begin{equation*} \label{eq:Zp}
\begin{aligned}   Z_p = \begin{bmatrix} 0 & 0 & 0 &\ldots & 0\\
                        I_p & 0 & 0 &\ldots & 0\\
                        0 & I_p & 0 &\ldots &0\\
                        \vdots &\vdots & \ddots & \ddots &\vdots \\
                        0 & 0 & \ldots &  I_p &0\end{bmatrix} \in \mathbb{R}^{pT \times pT},  \mathcal{I}_{p} = \begin{bmatrix} I_p \\0 \\\vdots \\0 \end{bmatrix} \in \mathbb{R}^{pT \times p}.
\end{aligned}
\end{equation*}
Then, we have
$$
\begin{aligned}
   \bm{\Phi}_{uy}\bm{\Phi}_{yy}^{-1} &= \left[\begin{array}{c|c} Z_p & \mathcal{I}_p \\ \hline
    \hat{U} & U_0\end{array}\right] \left[\begin{array}{c|c} Z_p & \mathcal{I}_p \\ \hline
    \hat{Y} & I_p\end{array}\right]^{-1} \\
    &= \left[\begin{array}{c|c} Z_p & \mathcal{I}_p \\ \hline
    \hat{U} & U_0\end{array}\right] \left[\begin{array}{c|c} Z_p -\mathcal{I}_p\hat{Y}  & -\mathcal{I}_p \\ \hline
    \hat{Y} & I_p\end{array}\right] \\
    &= \left[\begin{array}{cc|c} Z_p & \mathcal{I}_p\hat{Y} & \mathcal{I}_p \\
    0 & Z_p - \mathcal{I}_p\hat{Y} & -\mathcal{I}_p \\\hline
    \hat{U} & U_0\hat{Y} & U_0\end{array}\right].
\end{aligned}
$$
By defining a transformation
\begin{equation*} 
    T = \begin{bmatrix}
        I & I \\
        0 & I
    \end{bmatrix}, T^{-1} = \begin{bmatrix}
        I & -I \\
        0 & I
    \end{bmatrix},
\end{equation*}
with compatible dimension, and according to~\eqref{eq:Transformation}, we have \begin{equation*} 
     \bm{\Phi}_{uy}\bm{\Phi}_{yy}^{-1} = \left[\begin{array}{cc|c} Z_p & 0 & 0 \\
    0 & Z_p - \mathcal{I}_p\hat{Y} & -\mathcal{I}_p \\\hline
    \hat{U} & U_0\hat{Y}- \hat{U} & U_0\end{array}\right] = \left[\begin{array}{c|c} Z_p - \mathcal{I}_p\hat{Y} & -\mathcal{I}_p \\\hline
    U_0\hat{Y}- \hat{U} & U_0\end{array}\right].
\end{equation*}
In the last step, we have removed some uncontrollable and unobservable modes.

\subsection*{Proof of Theorem~\ref{Theo:SLPstatespace}}
First, similar to the realization of $\bm{\Phi}_{uy}\bm{\Phi}_{yy}^{-1}$, we have
$$
\begin{aligned}
    \bm{\Phi}_{ux}\bm{\Phi}_{xx}^{-1} = (z\bm{\Phi}_{ux})(z\bm{\Phi}_{xx})^{-1} =  \left[\begin{array}{c |c} Z_n - \mathcal{I}_n\hat{R} & -\mathcal{I}_n  \\ \hline
  M_1\hat{R}-\hat{M} & M_1\end{array}\right],
\end{aligned}
$$
where $\hat{M}$ and $\hat{R}$ are defined in~\eqref{eq:hatM}, and $Z_n$ and $\mathcal{I}_n$ are defined as
\begin{equation*}
\begin{aligned}
    Z_n = \begin{bmatrix} 0 & 0 & 0 &\ldots & 0\\
                       I_n & 0 & 0 &\ldots & 0\\
                        0 & I_n & 0 &\ldots &0\\
                        \vdots &\vdots & \ddots & \ddots &\vdots \\
                        0 & 0 & \ldots &  I_n &0\end{bmatrix} \in \mathbb{R}^{n\hat{T} \times n\hat{T}},  \mathcal{I}_{n} = \begin{bmatrix} I_n \\0 \\\vdots \\0 \end{bmatrix} \in \mathbb{R}^{n\hat{T}\times n}.
    \end{aligned}
\end{equation*}
Then, we have
$$
\begin{aligned}
    &\bm{\Phi}_{uy} - (z\bm{\Phi}_{ux})(z\bm{\Phi}_{xx})^{-1}\bm{\Phi}_{xy} \\
    =&  \left[\begin{array}{c|c} Z_p & \mathcal{I}_p \\ \hline
    \hat{L} & L_0\end{array}\right]-  \left[\begin{array}{c c |c} Z_n - \mathcal{I}_n\hat{R} & -\mathcal{I}_n\hat{N} & 0  \\ 0 & Z_p &  \mathcal{I}_p\\ \hline
  M_1\hat{R}-\hat{M} & M_1 \hat{N} & 0 \end{array}\right]  \\
   =&  \left[\begin{array}{c c c |c} Z_p & 0 & 0 & \mathcal{I}_p \\ 0 & Z_n - \mathcal{I}_n\hat{R} & -\mathcal{I}_n\hat{N} & 0  \\ 0 & 0 & Z_p &  \mathcal{I}_p\\ \hline
 \hat{L} & -M_1\hat{R}+\hat{M} & -M_1 \hat{N} & L_0\end{array}\right]
\end{aligned}
$$
Define a similarity transformation
$$
    \hat{T} = \begin{bmatrix}
    I & 0 & -I \\
    0 & I & 0 \\
    0 & 0 &  I \\
    \end{bmatrix}, \hat{T}^{-1} = \begin{bmatrix}
    I & 0 & I \\
    0 & I & 0 \\
    0 & 0 &  I \\
    \end{bmatrix}.
$$
We derive
\begin{equation*} \label{eq:SLScontroller}
\begin{aligned}
    \bm{\Phi}_{uy} - \bm{\Phi}_{ux}\bm{\Phi}_{xx}^{-1}\bm{\Phi}_{xy}=  \left[\begin{array}{c c |c} Z_n - \mathcal{I}_n\hat{R} & -\mathcal{I}_n\hat{N} & 0  \\  0 & Z_p &  \mathcal{I}_p\\ \hline
 -M_1\hat{R}+\hat{M} & -M_1\hat{N}+\hat{L} & L_0\end{array}\right].
\end{aligned}
\end{equation*}

\section{Derivations of~\eqref{eq:SLPuncertainty}}
\label{app:slperror}
For notational convenience, we use $(\mathbf{R},\mathbf{M},\mathbf{N},\mathbf{L})$ in place of $(\hat{\bm{\Phi}}_{xx},\hat{\bm{\Phi}}_{ux},\hat{\bm{\Phi}}_{xy},\hat{\bm{\Phi}}_{uy})$ here, as used in~\cite{wang2019system}. The following derivations utilize the affine relationship~\eqref{eq:slp_error} multiple times:
$$
    \begin{aligned}
        &(zI - A - B \mathbf{K} C)^{-1} \\
        = &(zI - A - B (\mathbf{L} - \mathbf{M}\mathbf{M}^{-1}\mathbf{N}) C)^{-1} \\
         = &(zI - A - B \mathbf{L}C + B\mathbf{M}\mathbf{R}^{-1}\mathbf{N}C)^{-1}\\
        =&(zI - A - B (\mathbf{M}(zI - A) - \hat{\bm{\Delta}}_4) + B\mathbf{M}\mathbf{R}^{-1}\mathbf{N}C)^{-1} \\
        = &(zI - A - B\mathbf{M}(zI - A)  + B\hat{\bm{\Delta}}_4 + B\mathbf{M}\mathbf{R}^{-1}\mathbf{N}C)^{-1} \\
        \end{aligned}
        $$
        $$
        \begin{aligned}
        = &(zI - A - B\mathbf{M}\mathbf{R}^{-1}(\mathbf{R}(zI - A)  - \mathbf{N}C) + B\hat{\bm{\Delta}}_4 )^{-1}\\
        = &(zI - A - B\mathbf{M}\mathbf{R}^{-1}(I + \hat{\bm{\Delta}}_3) + B\hat{\bm{\Delta}}_4 )^{-1} \\
        = &(zI - A - B\mathbf{M}\mathbf{R}^{-1} - B\mathbf{M}\mathbf{R}^{-1}\hat{\bm{\Delta}}_3 + B\hat{\bm{\Delta}}_4 )^{-1}
        \\
    =& \mathbf{R}((zI - A)\mathbf{R} - B\mathbf{M} - B\mathbf{M}\mathbf{R}^{-1}\hat{\bm{\Delta}}_3\mathbf{R} + B\hat{\bm{\Delta}}_4\mathbf{R})^{-1}
        \\
        = &\mathbf{R}(I + \hat{\bm{\Delta}}_1 - B\mathbf{M}\mathbf{R}^{-1}\hat{\bm{\Delta}}_3\mathbf{R} + B\hat{\bm{\Delta}}_4\mathbf{R})^{-1} \\
        = &\mathbf{R}(I + \hat{\bm{\Delta}}_1 - ((zI-A)\mathbf{R} - I - \hat{\bm{\Delta}}_1)\mathbf{R}^{-1}\hat{\bm{\Delta}}_3\mathbf{R} + B\hat{\bm{\Delta}}_4\mathbf{R})^{-1}. \\
    \end{aligned}
$$
We can further simplified this expression:
$$
    \begin{aligned}
        &\mathbf{R}(I + \hat{\bm{\Delta}}_1 - ((zI-A)\mathbf{R} - I - \hat{\bm{\Delta}}_1)\mathbf{R}^{-1}\hat{\bm{\Delta}}_3\mathbf{R} + B\hat{\bm{\Delta}}_4\mathbf{R})^{-1} \\
         =&((I + \hat{\bm{\Delta}}_1)\mathbf{R}^{-1} - (zI - A)\hat{\bm{\Delta}}_3 + (I + \hat{\bm{\Delta}}_1)\mathbf{R}^{-1}\hat{\bm{\Delta}}_3 + B\hat{\bm{\Delta}}_4)^{-1} \\
         =&(I + \hat{\bm{\Delta}})^{-1}\mathbf{R}(I + \hat{\bm{\Delta}}_1)^{-1},
    \end{aligned}
$$
where $\hat{\bm{\Delta}}$ is defined in~\eqref{eq:Runcertainty}.

\section{Robustness of Mixed I/II parameterizations} \label{app:mixedI/II}
We consider the Mixed I parameterization in Proposition~\ref{prop:slp3}. The transfer matrices $ \hat{\bm{\Phi}}_{yx}, \hat{\bm{\Phi}}_{ux},
            \hat{\bm{\Phi}}_{yy}, \hat{\bm{\Phi}}_{uy}$ only approximately satisfy the affine constraint~\eqref{eq:slp3constraint}, \emph{i.e.}, we have
  \begin{equation} \label{eq:slp3constrainterr}
             \begin{aligned}
           \begin{bmatrix} I & - \mathbf{G} \end{bmatrix} \begin{bmatrix}
            \hat{\bm{\Phi}}_{yx}  & \hat{\bm{\Phi}}_{yy}\\
            \hat{\bm{\Phi}}_{ux}  & \hat{\bm{\Phi}}_{uy}\\
        \end{bmatrix} &= {\begin{bmatrix} C(zI - A)^{-1} + \bm{\Delta}_1  &  I + \bm{\Delta}_2\end{bmatrix}},\\
        \begin{bmatrix}
            \hat{\bm{\Phi}}_{yx}  & \hat{\bm{\Phi}}_{yy}\\
            \hat{\bm{\Phi}}_{ux}  & \hat{\bm{\Phi}}_{uy}\\
        \end{bmatrix}  \begin{bmatrix} zI - A \\ -C \end{bmatrix} &= \begin{bmatrix} \bm{\Delta}_3 \\\bm{\Delta}_4 \end{bmatrix},
        \\
            \hat{\bm{\Phi}}_{yx}, \hat{\bm{\Phi}}_{ux},
            \hat{\bm{\Phi}}_{yy}, \hat{\bm{\Phi}}_{uy} &\in \mathcal{RH}_{\infty},
            \end{aligned}
        \end{equation}
where $
    \bm{\Delta}_1,  \bm{\Delta}_2,
    \bm{\Delta}_3, \bm{\Delta}_4$ are the computational {residuals}.
\begin{theorem} \label{theo:MixIrobustness}
  Let $ \hat{\bm{\Phi}}_{yx}, \hat{\bm{\Phi}}_{ux},
            \hat{\bm{\Phi}}_{yy}, \hat{\bm{\Phi}}_{uy}$ satisfy~\eqref{eq:slp3constrainterr}. Then, we have the following statements.
            \begin{enumerate}
                \item  In the case of $\mathbf{G} \in \mathcal{RH}_{\infty}$, the controller $\mathbf{K} = \hat{\bm{\Phi}}_{uy}\hat{\bm{\Phi}}_{yy}^{-1}$ internally stabilizes the plant $\mathbf{G}$ if and only if $(I + \bm{\Delta}_2)^{-1}$ is stable.
                \item In the case of $\mathbf{G} \notin \mathcal{RH}_{\infty}$, the controller $\mathbf{K} = \hat{\bm{\Phi}}_{uy}\hat{\bm{\Phi}}_{yy}^{-1}$ cannot guarantee the internal stability of the closed-loop system unless $\bm{\Delta}_1 = 0,\bm{\Delta}_2 = 0,\bm{\Delta}_3 = 0,\bm{\Delta}_4 = 0.$
            \end{enumerate}
\end{theorem}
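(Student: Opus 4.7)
The plan is to mirror the proof of Theorem~\ref{theo:IOProbustness}, with statement 3 of Theorem~\ref{Theo:mainresult} (stability of the map $(\bm{\delta}_x,\bm{\delta}_y)\to(\mathbf{y},\mathbf{u})$) playing the role that statement 4 played in the IOP argument. The crucial preliminary computation is to express the actual closed-loop transfer matrices obtained by plugging $\mathbf{K}=\hat{\bm{\Phi}}_{uy}\hat{\bm{\Phi}}_{yy}^{-1}$ into the loop. Using the column relation $\hat{\bm{\Phi}}_{yy}-\mathbf{G}\hat{\bm{\Phi}}_{uy}=I+\bm{\Delta}_2$ from~\eqref{eq:slp3constrainterr}, a direct manipulation yields
\[
(I-\mathbf{G}\mathbf{K})^{-1}=\hat{\bm{\Phi}}_{yy}(I+\bm{\Delta}_2)^{-1},
\]
so $\bm{\Delta}_2$ takes the role that $\bm{\Delta}_1$ plays in the IOP analysis. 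From this identity the four maps of interest become
\[
\begin{bmatrix} \hat{\bm{\Phi}}_{yy}(I+\bm{\Delta}_2)^{-1}C(zI-A)^{-1} & \hat{\bm{\Phi}}_{yy}(I+\bm{\Delta}_2)^{-1}\\ \hat{\bm{\Phi}}_{uy}(I+\bm{\Delta}_2)^{-1}C(zI-A)^{-1} & \hat{\bm{\Phi}}_{uy}(I+\bm{\Delta}_2)^{-1}\end{bmatrix}.
\]

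For Statement 1, I would note that when $\mathbf{G}\in\mathcal{RH}_{\infty}$, Assumption~\ref{assumption:stabilizability} together with Lemma~\ref{lemma:3} forces $A$ to be stable, so $C(zI-A)^{-1}\in\mathcal{RH}_{\infty}$. Combined with $\hat{\bm{\Phi}}_{yy},\hat{\bm{\Phi}}_{uy}\in\mathcal{RH}_{\infty}$, it then follows that stability of $(I+\bm{\Delta}_2)^{-1}$ makes every entry of the above matrix stable, and Theorem~\ref{Theo:mainresult}(3) concludes $\mathbf{K}\in\mathcal{C}_{\text{stab}}$. For necessity, instability of $(I+\bm{\Delta}_2)^{-1}$ generically produces an unstable pole in $\hat{\bm{\Phi}}_{yy}(I+\bm{\Delta}_2)^{-1}$ (the $\bm{\delta}_y\to\mathbf{y}$ block), so internal stability fails.

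For Statement 2, assume $\mathbf{G}\notin\mathcal{RH}_{\infty}$, which under Assumption~\ref{assumption:stabilizability} means $A$ has unstable eigenvalues and hence $(zI-A)^{-1}$ is unstable. The row relation in~\eqref{eq:slp3constrainterr} gives $\hat{\bm{\Phi}}_{yy}C=\hat{\bm{\Phi}}_{yx}(zI-A)-\bm{\Delta}_3$ and $\hat{\bm{\Phi}}_{uy}C=\hat{\bm{\Phi}}_{ux}(zI-A)-\bm{\Delta}_4$, so, assuming $\bm{\Delta}_2=0$, the two leftmost blocks of the matrix above reduce to $\hat{\bm{\Phi}}_{yx}-\bm{\Delta}_3(zI-A)^{-1}$ and $\hat{\bm{\Phi}}_{ux}-\bm{\Delta}_4(zI-A)^{-1}$. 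Their stability requires $\bm{\Delta}_3(zI-A)^{-1}$ and $\bm{\Delta}_4(zI-A)^{-1}$ to be stable, which in turn demands the unstable modes of $(zI-A)^{-1}$ to be exactly cancelled by $\bm{\Delta}_3$ and $\bm{\Delta}_4$—a coincidence that generically fails whenever these residuals are nonzero. A symmetric argument, combined with the column relation, shows that $\bm{\Delta}_2\neq 0$ or $\bm{\Delta}_1\neq 0$ likewise introduces unstable factors when multiplied against $\mathbf{G}$ or $(zI-A)^{-1}$, so only the exact case $\bm{\Delta}_1=\bm{\Delta}_2=\bm{\Delta}_3=\bm{\Delta}_4=0$ guarantees internal stability.

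The main obstacle is the generic non-cancellation claim in Statement 2: one must show that residuals produced by a numerical solver cannot be expected to annihilate the unstable modes of $(zI-A)^{-1}$ or of $\mathbf{G}$. I plan to handle this exactly as in the proof of Theorem~\ref{theo:IOProbustness}(2), namely by pointing out that $\bm{\Delta}_3(zI-A)^{-1}$, $\bm{\Delta}_4(zI-A)^{-1}$, and $(I+\bm{\Delta}_2)^{-1}\mathbf{G}$ are products of a stable (or arbitrary) transfer matrix with an unstable one, and that exact pole-zero cancellation is a measure-zero event that numerical computation cannot guarantee.
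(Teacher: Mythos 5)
Your proposal is correct and follows essentially the same route the paper intends: the paper's own ``proof'' of this theorem is simply the remark that it is almost identical to that of Theorem~\ref{theo:IOProbustness}, and your computation of $(I-\mathbf{G}\mathbf{K})^{-1}=\hat{\bm{\Phi}}_{yy}(I+\bm{\Delta}_2)^{-1}$ together with the resulting four closed-loop maps is exactly that adaptation, with $\bm{\Delta}_2$ correctly taking over the role of $\bm{\Delta}_1$. One cosmetic slip: the map $(\bm{\delta}_x,\bm{\delta}_y)\to(\mathbf{y},\mathbf{u})$ is statement 2) of Theorem~\ref{Theo:mainresult}, not statement 3).
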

The proof is almost identical to that of Theorem~\ref{theo:IOProbustness}.

We then consider Mixed II parameterization in Proposition~\ref{prop:slp4}. The transfer matrices $             \hat{\bm{\Phi}}_{xy}, \hat{\bm{\Phi}}_{uy},
            \hat{\bm{\Phi}}_{xu}, \hat{\bm{\Phi}}_{uu}$ only approximately satisfy the affine constraint~\eqref{eq:slp4constraint}, \emph{i.e.}, we have
  \begin{equation} \label{eq:slp4constrainterr}
             \begin{aligned}
           \begin{bmatrix} zI - A & - B \end{bmatrix} \begin{bmatrix}
            \hat{\bm{\Phi}}_{xy}  & \hat{\bm{\Phi}}_{xu}\\
            \hat{\bm{\Phi}}_{uy}  & \hat{\bm{\Phi}}_{uu}\\
        \end{bmatrix}  &= \begin{bmatrix} \bm{\Delta}_1 &  \bm{\Delta}_2 \end{bmatrix},\\
       \begin{bmatrix}
            \hat{\bm{\Phi}}_{xy}  & \hat{\bm{\Phi}}_{xu}\\
            \hat{\bm{\Phi}}_{uy}  & \hat{\bm{\Phi}}_{uu}\\
        \end{bmatrix} \begin{bmatrix} -\mathbf{G}\\ I \end{bmatrix} &= \begin{bmatrix}
            (zI - A)^{-1}B + \bm{\Delta}_3 \\
            I + \bm{\Delta}_4
        \end{bmatrix},
        \\
            \hat{\bm{\Phi}}_{xy}, \hat{\bm{\Phi}}_{uy},
            \hat{\bm{\Phi}}_{xu}, \hat{\bm{\Phi}}_{uu} &\in \mathcal{RH}_{\infty},
            \end{aligned}
        \end{equation}
where $
    \bm{\Delta}_1,  \bm{\Delta}_2,
    \bm{\Delta}_3, \bm{\Delta}_4$ are the computational residuals.
\begin{theorem} \label{theo:MixIIrobustness}
  Let $             \hat{\bm{\Phi}}_{xy}, \hat{\bm{\Phi}}_{uy},
            \hat{\bm{\Phi}}_{xu}, \hat{\bm{\Phi}}_{uu}$ satisfy~\eqref{eq:slp4constrainterr}. Then, we have the following statements.
            \begin{enumerate}
                \item  In the case of $\mathbf{G} \in \mathcal{RH}_{\infty}$, the controller $\mathbf{K} = \hat{\bm{\Phi}}_{uu}^{-1}\hat{\bm{\Phi}}_{uy}$ internally stabilizes the plant $\mathbf{G}$ if and only if $(I + \bm{\Delta}_4)^{-1}$ is stable.
                \item In the case of $\mathbf{G} \notin \mathcal{RH}_{\infty}$, the controller $\mathbf{K} = \hat{\bm{\Phi}}_{uu}^{-1}\hat{\bm{\Phi}}_{uy}$ cannot guarantee the internal stability of the closed-loop system unless $\bm{\Delta}_1 = 0,\bm{\Delta}_2 = 0,\bm{\Delta}_3 = 0,\bm{\Delta}_4 = 0.$
            \end{enumerate}
\end{theorem}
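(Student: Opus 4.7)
The plan is to mirror the proof of Theorem~\ref{theo:IOProbustness}, with the Mixed II parameterization naturally aligning with statement 5 of Theorem~\ref{Theo:mainresult}: internal stability is equivalent to the stability of the four closed-loop responses from $(\bm{\delta}_y,\bm{\delta}_u)$ to $(\mathbf{x},\mathbf{u})$. Accordingly, my first step is to express each of $\bm{\Phi}_{uu},\bm{\Phi}_{uy},\bm{\Phi}_{xu},\bm{\Phi}_{xy}$ induced by $\mathbf{K}=\hat{\bm{\Phi}}_{uu}^{-1}\hat{\bm{\Phi}}_{uy}$ as a product of the known stable quantities $\hat{\bm{\Phi}}_{uy},\hat{\bm{\Phi}}_{uu},(zI-A)^{-1}B$ with the single inverse $(I+\bm{\Delta}_4)^{-1}$.

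The key identity will come from the ``right'' block of~\eqref{eq:slp4constrainterr}: since $\hat{\bm{\Phi}}_{uu}-\hat{\bm{\Phi}}_{uy}\mathbf{G}=I+\bm{\Delta}_4$, left-multiplying by $\hat{\bm{\Phi}}_{uu}^{-1}$ yields $I-\mathbf{K}\mathbf{G}=\hat{\bm{\Phi}}_{uu}^{-1}(I+\bm{\Delta}_4)$, hence $(I-\mathbf{K}\mathbf{G})^{-1}=(I+\bm{\Delta}_4)^{-1}\hat{\bm{\Phi}}_{uu}$. Using the standard identities $\bm{\Phi}_{uu}=(I-\mathbf{K}\mathbf{G})^{-1}$, $\bm{\Phi}_{uy}=(I-\mathbf{K}\mathbf{G})^{-1}\mathbf{K}$, $\bm{\Phi}_{xu}=(zI-A)^{-1}B\,\bm{\Phi}_{uu}$, $\bm{\Phi}_{xy}=(zI-A)^{-1}B\,\bm{\Phi}_{uy}$ (derivable from~\eqref{eq:responses}), I obtain
\begin{equation*}
\begin{aligned}
\bm{\Phi}_{uu}&=(I+\bm{\Delta}_4)^{-1}\hat{\bm{\Phi}}_{uu}, \qquad \bm{\Phi}_{uy}=(I+\bm{\Delta}_4)^{-1}\hat{\bm{\Phi}}_{uy},\\
\bm{\Phi}_{xu}&=(zI-A)^{-1}B(I+\bm{\Delta}_4)^{-1}\hat{\bm{\Phi}}_{uu},\\
\bm{\Phi}_{xy}&=(zI-A)^{-1}B(I+\bm{\Delta}_4)^{-1}\hat{\bm{\Phi}}_{uy}.
\end{aligned}
\end{equation*}

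For Statement 1, under $\mathbf{G}\in\mathcal{RH}_{\infty}$ together with Assumption~\ref{assumption:stabilizability}, Lemma~\ref{lemma:3} forces $A$ to be stable, so $(zI-A)^{-1}B\in\mathcal{RH}_{\infty}$. Thus stability of $(I+\bm{\Delta}_4)^{-1}$ makes all four responses lie in $\mathcal{RH}_{\infty}$, and Theorem~\ref{Theo:mainresult}(5) delivers internal stability; conversely, an unstable pole of $(I+\bm{\Delta}_4)^{-1}$ passes through to $\bm{\Phi}_{uu}$ (up to generic cancellation) and breaks internal stability. For Statement 2, $\mathbf{G}\notin\mathcal{RH}_{\infty}$ implies by Lemma~\ref{lemma:3} that $A$ is unstable, so $(zI-A)^{-1}B$ is unstable. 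Setting $\bm{\Delta}_4=0$, one rewrites $\bm{\Phi}_{xu}=(zI-A)^{-1}B\,\hat{\bm{\Phi}}_{uu}=\hat{\bm{\Phi}}_{xu}-\bm{\Delta}_3$ using the ``left'' equalities of~\eqref{eq:slp4constrainterr}, and analogous substitutions yield $\bm{\Phi}_{xy}=\hat{\bm{\Phi}}_{xy}$ shifted by stable terms minus quantities such as $(zI-A)^{-1}\bm{\Delta}_1$ that carry the unstable modes of $A$. Because these residuals enter multiplied by the unstable dynamics, any one of $\bm{\Delta}_1,\bm{\Delta}_2,\bm{\Delta}_3$ being nonzero can destabilize a response; hence all four residuals must vanish to guarantee internal stability.

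The main obstacle is the rigorous handling of Statement 2: showing that a nonzero but small residual cannot be absorbed into a stable cancellation when $A$ is unstable. The plan is to carry out this step exactly as in the proof of Theorem~\ref{theo:IOProbustness} Statement 2, noting that the products $(zI-A)^{-1}\bm{\Delta}_i$ or $\hat{\bm{\Phi}}_{xy}\mathbf{G}$ generically retain the unstable poles of $\mathbf{G}$; the ``only if'' direction in Statement 1 similarly relies on no artificial cancellation between $(I+\bm{\Delta}_4)^{-1}$ and $\hat{\bm{\Phi}}_{uu}$, which is the standard caveat inherited from the IOP case. Beyond these points, the argument is a direct dualization of Theorem~\ref{theo:IOProbustness} via the ``input-to-state-and-control'' viewpoint of the Mixed II parameterization.
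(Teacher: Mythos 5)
Your proposal is correct and takes essentially the same route as the paper: the identity $(I-\mathbf{K}\mathbf{G})^{-1}=(I+\bm{\Delta}_4)^{-1}\hat{\bm{\Phi}}_{uu}$ is precisely the computation the paper highlights, and Statement 2 follows the same residual-propagation template as Theorem~\ref{theo:IOProbustness} (the paper simply invokes Corollary~\ref{prop:stable} for Statement 1 instead of checking all four responses via Theorem~\ref{Theo:mainresult}). One minor slip: with $\bm{\Delta}_4=0$ the left block of~\eqref{eq:slp4constrainterr} gives $\bm{\Phi}_{xu}=(zI-A)^{-1}B\hat{\bm{\Phi}}_{uu}=\hat{\bm{\Phi}}_{xu}-(zI-A)^{-1}\bm{\Delta}_2$, not $\hat{\bm{\Phi}}_{xu}-\bm{\Delta}_3$, though your next clause shows you have the correct mechanism in mind.
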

The proof is similar to that of Theorem~\ref{theo:IOProbustness}. We just highlight that for the case  $\mathbf{G} \in \mathcal{RH}_{\infty}$, we only need to check the closed-loop response from $\bm{\delta}_y$ to $\bm{u}$, which is
$$
\begin{aligned}
    \bm{u} &= \mathbf{K}(I - \mathbf{G}\mathbf{K})^{-1}\bm{\delta}_y \\
    &= (I - \mathbf{K}\mathbf{G})^{-1}\mathbf{K}\bm{\delta}_y \\
    & = (I - \hat{\bm{\Phi}}_{uu}^{-1}\hat{\bm{\Phi}}_{uy}\mathbf{G})^{-1}\hat{\bm{\Phi}}_{uu}^{-1}\hat{\bm{\Phi}}_{uy}\bm{\delta}_y \\
    &= (I + \bm{\Delta}_4)^{-1}\hat{\bm{\Phi}}_{uy}\bm{\delta}_y.
\end{aligned}
$$
Then, the first statement in Theorem~\ref{theo:MixIIrobustness} becomes obvious.

\section{Proof of Proposition~\ref{prop:initialcontroller}}
\label{app:initialcontroller}
\begin{proof}
    $\Leftarrow$  We first prove that $\forall\, \mathbf{K} \in \hat{\mathcal{C}}_{\text{stab}}$, we have $\mathbf{K} \in {\mathcal{C}}_{\text{stab}}$. Since $\mathbf{K} \in \hat{\mathcal{C}}_{\text{stab}}$, we know
    $$
    \begin{bmatrix}
     (I - \hat{\mathbf{G}}\mathbf{K}_1)^{-1} & (I - \hat{\mathbf{G}}\mathbf{K}_1)^{-1}\hat{\mathbf{G}} \\
      \mathbf{K}_1(I - \hat{\mathbf{G}}\mathbf{K}_1)^{-1} & (I - \mathbf{K}_1\hat{\mathbf{G}})^{-1}
    \end{bmatrix}
   \in  \mathcal{RH}_{\infty},
    $$
    where $\hat{\mathbf{G}} = (I - \mathbf{G}\mathbf{K}_0)^{-1}\mathbf{G}$ and $\mathbf{K} = \mathbf{K}_1 + \mathbf{K}_0$ with $\mathbf{K}_0 \in \mathcal{C}_{\text{stab}} \cap \mathcal{RH}_{\infty}$. Now, we verify that
     $$
        \begin{aligned}
              (I - \mathbf{G}(\mathbf{K}_0 + \mathbf{K}_1))^{-1}
              &=(I - \hat{\mathbf{G}}\mathbf{K}_1)^{-1}(I - \mathbf{G}\mathbf{K}_0)^{-1} \in \mathcal{RH}_{\infty} \\
              (I - \mathbf{G}(\mathbf{K}_0 + \mathbf{K}_1))^{-1}\mathbf{G} &=(I - \hat{\mathbf{G}}\mathbf{K}_1)^{-1}\hat{\mathbf{G}} \in \mathcal{RH}_{\infty}
        \end{aligned}
    $$
    and that
    $$
        \begin{aligned}
              &(\mathbf{K}_0 +\mathbf{K}_1)(I - \mathbf{G}\mathbf{K})^{-1} \\
              =&\mathbf{K}_0 (I - \mathbf{G}\mathbf{K})^{-1} + \mathbf{K}_1(I - \mathbf{G}\mathbf{K})^{-1} \in \mathcal{RH}_{\infty}.
        \end{aligned}
    $$
    Finally, we show
    $$
        \begin{aligned}
              (I - (\mathbf{K}_0 + \mathbf{K}_1)\mathbf{G})^{-1} = (I - \mathbf{K}_1\hat{\mathbf{G}})^{-1}(I - \mathbf{K}_0\mathbf{G})^{-1} \in \mathcal{RH}_{\infty}.
        \end{aligned}
    $$
    By Theorem~\ref{Theo:mainresult}, we have proved $\mathbf{K} = \mathbf{K}_0 + \mathbf{K}_1 \in \mathcal{C}_{\text{stab}}$.

    $\Rightarrow:$ This direction is similar: $\forall \mathbf{K} \in \mathcal{C}_{\text{stab}}$, we prove that $\mathbf{K}_1 = \mathbf{K} - \mathbf{K}_0$ internally stabilizes $\hat{\mathbf{G}}$. 
    For example
    $$
     \begin{aligned}
        (I - \hat{\mathbf{G}}\mathbf{K}_1)^{-1} &= (I - \mathbf{G}\mathbf{K})^{-1} (I + \mathbf{G}\mathbf{K}_0) \\
        &= (I - \mathbf{G}\mathbf{K})^{-1} + (I - \mathbf{G}\mathbf{K})^{-1}\mathbf{G}\mathbf{K}_0 \in \mathcal{RH}_{\infty},
    \end{aligned}
     $$
     and other conditions can be proved similarly. We complete the proof.
\end{proof}

\vspace{10mm}

\balance

{\small
\begin{spacing}{1.1}
\bibliographystyle{IEEEtran}        
\bibliography{references.bib}

\begin{thebibliography}{10}
\providecommand{\url}[1]{#1}
\csname url@samestyle\endcsname
\providecommand{\newblock}{\relax}
\providecommand{\bibinfo}[2]{#2}
\providecommand{\BIBentrySTDinterwordspacing}{\spaceskip=0pt\relax}
\providecommand{\BIBentryALTinterwordstretchfactor}{4}
\providecommand{\BIBentryALTinterwordspacing}{\spaceskip=\fontdimen2\font plus
\BIBentryALTinterwordstretchfactor\fontdimen3\font minus
  \fontdimen4\font\relax}
\providecommand{\BIBforeignlanguage}[2]{{%
\expandafter\ifx\csname l@#1\endcsname\relax
\typeout{** WARNING: IEEEtran.bst: No hyphenation pattern has been}%
\typeout{** loaded for the language `#1'. Using the pattern for}%
\typeout{** the default language instead.}%
\else
\language=\csname l@#1\endcsname
\fi
#2}}
\providecommand{\BIBdecl}{\relax}
\BIBdecl

\bibitem{zhou1996robust}
K.~Zhou, J.~C. Doyle, K.~Glover \emph{et~al.}, \emph{Robust and optimal
  control}.\hskip 1em plus 0.5em minus 0.4em\relax Prentice hall New Jersey,
  1996, vol.~40.

\bibitem{dullerud2013course}
G.~E. Dullerud and F.~Paganini, \emph{A course in robust control theory: a
  convex approach}.\hskip 1em plus 0.5em minus 0.4em\relax Springer Science \&
  Business Media, 2013, vol.~36.

\bibitem{boyd1991linear}
S.~P. Boyd and C.~H. Barratt, \emph{Linear controller design: limits of
  performance}.\hskip 1em plus 0.5em minus 0.4em\relax Prentice Hall Englewood
  Cliffs, NJ, 1991.

\bibitem{youla1976modern}
D.~Youla, H.~Jabr, and J.~Bongiorno, ``Modern {Wiener-Hopf} design of optimal
  controllers--{Part II}: The multivariable case,'' \emph{IEEE Trans. Autom.
  Control.}, vol.~21, no.~3, pp. 319--338, 1976.

\bibitem{francis1987course}
A.~Francis, \emph{A course in $\mathcal{H}_\infty$ control theory}.\hskip 1em
  plus 0.5em minus 0.4em\relax Springer-Verlag, 1987.

\bibitem{wang2019system}
Y.-S. Wang, N.~Matni, and J.~C. Doyle, ``A system level approach to controller
  synthesis,'' \emph{IEEE Trans. Autom. Control.}, 2019.

\bibitem{furieri2019input}
L.~Furieri, Y.~Zheng, A.~Papachristodoulou, and M.~Kamgarpour, ``An
  input-output parametrization of stabilizing controllers: amidst {Youla} and
  system level synthesis,'' \emph{IEEE Control Systems Letters}, vol.~3, no.~4,
  pp. 1014--1019, Oct 2019.

\bibitem{2020arXiv200412565T}
S.-H. {Tseng} and J.~L. {Shuang}, ``{SLSpy: Python-Based System-Level
  Controller Synthesis Framework},'' \emph{arXiv e-prints}, p.
  arXiv:2004.12565, Apr. 2020.

\bibitem{rotkowitz2006characterization}
M.~Rotkowitz and S.~Lall, ``A characterization of convex problems in
  decentralized control,'' \emph{IEEE Trans. Autom. Control.}, vol.~51, no.~2,
  pp. 274--286, 2006.

\bibitem{qi2004structured}
X.~Qi, M.~V. Salapaka, P.~G. Voulgaris, and M.~Khammash, ``Structured optimal
  and robust control with multiple criteria: A convex solution,'' \emph{IEEE
  Trans. Autom. Control.}, vol.~49, no.~10, pp. 1623--1640, 2004.

\bibitem{lessard2015convexity}
L.~Lessard and S.~Lall, ``Convexity of decentralized controller synthesis,''
  \emph{IEEE Trans. Autom. Control.}, vol.~61, no.~10, pp. 3122--3127, 2015.

\bibitem{shah2013cal}
P.~Shah and P.~A. Parrilo, ``$\mathcal{H}_2$-optimal decentralized control over
  posets: A state-space solution for state-feedback,'' \emph{IEEE Trans. Autom.
  Control.}, vol.~58, no.~12, pp. 3084--3096, 2013.

\bibitem{matni2016regularization}
N.~Matni and V.~Chandrasekaran, ``Regularization for design,'' \emph{IEEE
  Trans. Autom. Control.}, vol.~61, no.~12, pp. 3991--4006, 2016.

\bibitem{sabuau2014youla}
{\c{S}}.~Sab{\u{a}}u and N.~C. Martins, ``Youla-like parametrizations subject
  to {QI} subspace constraints,'' \emph{IEEE Trans. Autom. Control.}, vol.~59,
  no.~6, pp. 1411--1422, 2014.

\bibitem{dean2017sample}
S.~Dean, H.~Mania, N.~Matni, B.~Recht, and S.~Tu, ``On the sample complexity of
  the linear quadratic regulator,'' \emph{Foundations of Computational
  Mathematics}, pp. 1--47, 2017.

\bibitem{dean2018regret}
------, ``Regret bounds for robust adaptive control of the linear quadratic
  regulator,'' in \emph{Advances in Neural Information Processing Systems},
  2018, pp. 4188--4197.

\bibitem{simchowitz2020improper}
M.~Simchowitz, K.~Singh, and E.~Hazan, ``Improper learning for non-stochastic
  control,'' \emph{arXiv preprint arXiv:2001.09254}, 2020.

\bibitem{lale2020logarithmic}
S.~Lale, K.~Azizzadenesheli, B.~Hassibi, and A.~Anandkumar, ``Logarithmic
  regret bound in partially linear dynamical systems,''
  \emph{arXiv:2003.11227}, 2020.

\bibitem{furieri2019learning}
L.~Furieri, Y.~Zheng, and M.~Kamgarpour, ``Learning the globally optimal
  distributed {LQ} regulator,'' \emph{2nd L4DC Conference}, pp. 1--12, 2020.

\bibitem{zheng2021sample}
Y.~Zheng, L.~Furieri, M.~Kamgarpour, and N.~Li, ``Sample complexity of linear
  quadratic gaussian (lqg) control for output feedback systems,'' in
  \emph{Learning for Dynamics and Control}.\hskip 1em plus 0.5em minus
  0.4em\relax PMLR, 2021, pp. 559--570.

\bibitem{Witsenhausen}
H.~S. Witsenhausen, ``A counterexample in stochastic optimum control,''
  \emph{SIAM J. Control}, vol.~6, no.~1, pp. 131--147, 1968.

\bibitem{tsitsiklis1985complexity}
J.~Tsitsiklis and M.~Athans, ``On the complexity of decentralized decision
  making and detection problems,'' \emph{IEEE Trans. Autom. Control.}, vol.~30,
  no.~5, pp. 440--446, 1985.

\bibitem{furieri2019sparsity}
L.~Furieri, Y.~Zheng, A.~Papachristodoulou, and M.~Kamgarpour, ``Sparsity
  invariance for convex design of distributed controllers,'' \emph{IEEE Trans.
  Control Netw. Syst.}, pp. 1--12, 2020.

\bibitem{zheng2019equivalence}
Y.~{Zheng}, L.~{Furieri}, A.~{Papachristodoulou}, N.~{Li}, and M.~{Kamgarpour},
  ``On the equivalence of {Youla}, system-level and input-output
  parameterizations,'' \emph{IEEE Trans. Autom. Control.}, pp. 1--8, 2020.

\bibitem{anderson2019system}
J.~Anderson, J.~C. Doyle, S.~H. Low, and N.~Matni, ``System level synthesis,''
  \emph{Annual Reviews in Control}, 2019.

\bibitem{anderson2017structured}
J.~Anderson and N.~Matni, ``Structured state space realizations for {SLS}
  distributed controllers,'' in \emph{2017 55th Annual Allerton Conference on
  Communication, Control, and Computing (Allerton)}.\hskip 1em plus 0.5em minus
  0.4em\relax IEEE, 2017, pp. 982--987.

\bibitem{nett1984connection}
C.~Nett, C.~Jacobson, and M.~Balas, ``A connection between state-space and
  doubly coprime fractional representations,'' \emph{IEEE Trans. Autom.
  Control.}, vol.~29, no.~9, pp. 831--832, 1984.

\bibitem{pohl2009advanced}
V.~Pohl and H.~Boche, \emph{Advanced topics in system and signal theory: a
  mathematical approach}.\hskip 1em plus 0.5em minus 0.4em\relax Springer
  Science \& Business Media, 2009, vol.~4.

\bibitem{rantzer2019realizability}
A.~Rantzer, ``Realizability and internal model control on networks,'' in
  \emph{2019 18th European Control Conference (ECC)}.\hskip 1em plus 0.5em
  minus 0.4em\relax IEEE, 2019, pp. 3475--3477.

\bibitem{vamsi2015optimal}
A.~S.~M. Vamsi and N.~Elia, ``Optimal distributed controllers realizable over
  arbitrary networks,'' \emph{IEEE Transactions on Automatic Control}, vol.~61,
  no.~1, pp. 129--144, 2015.

\bibitem{wang2018separable}
Y.-S. Wang, N.~Matni, and J.~C. Doyle, ``Separable and localized system-level
  synthesis for large-scale systems,'' \emph{IEEE Trans. Autom. Control.},
  vol.~63, no.~12, pp. 4234--4249, 2018.

\bibitem{jensen2021explicit}
E.~Jensen and B.~Bamieh, ``An explicit parametrization of closed loops for
  spatially distributed controllers with sparsity constraints,'' \emph{IEEE
  Transactions on Automatic Control}, 2021.

\bibitem{sturm1999using}
J.~F. Sturm, ``Using sedumi 1.02, a matlab toolbox for optimization over
  symmetric cones,'' \emph{Optimization methods and software}, vol.~11, no.
  1-4, pp. 625--653, 1999.

\bibitem{andersen2000mosek}
E.~D. Andersen and K.~D. Andersen, ``The {MOSEK} interior point optimizer for
  linear programming: an implementation of the homogeneous algorithm,'' in
  \emph{High performance optimization}.\hskip 1em plus 0.5em minus 0.4em\relax
  Springer, 2000.

\bibitem{matni2017scalable}
N.~Matni, Y.-S. Wang, and J.~Anderson, ``Scalable system level synthesis for
  virtually localizable systems,'' in \emph{Proc. 56th IEEE Conf. Decis.
  Control,}.\hskip 1em plus 0.5em minus 0.4em\relax IEEE, 2017, pp. 3473--3480.

\bibitem{zheng2015stability}
Y.~Zheng, S.~E. Li, J.~Wang, D.~Cao, and K.~Li, ``Stability and scalability of
  homogeneous vehicular platoon: Study on the influence of information flow
  topologies,'' \emph{IEEE Trans. Intell. Transp. Syst.}, vol.~17, no.~1, pp.
  14--26, 2015.

\bibitem{orosz2010traffic}
G.~Orosz, R.~E. Wilson, and G.~St{\'e}p{\'a}n, ``Traffic jams: dynamics and
  control,'' 2010.

\bibitem{zheng2020smoothing}
Y.~Zheng, J.~Wang, and K.~Li, ``Smoothing traffic flow via control of
  autonomous vehicles,'' \emph{IEEE Internet of Things Journal}, vol.~7, no.~5,
  pp. 3882--3896, 2020.

\end{thebibliography}
\end{spacing}
}

\end{document}